\DeclareMathOperator{\Real}{\mathbb{R}}
\newcommand{\jap}[1]{\!\left<#1\right>}
  \newcommand{\jxi}{\jap{\xi}}
\newcommand{\dx}{\partial_x}
\newcommand{\dxp}[1]{\partial_{x_{#1}}}
\newcommand{\dy}{\partial_y}
\newcommand{\dyp}[1]{\partial_{y_{#1}}}
\newcommand{\dxi}{\partial_{\xi}}
\providecommand{\norm}[1]{\lVert#1\rVert}
\providecommand{\abs}[1]{\lvert#1\rvert}
\newcommand{\comp}{\circ}
\newcommand{\tld}[1]{\tilde{#1}}
\newcommand{\tB}{\mathcal{\tld B}}
\newcommand{\tA}{\mathcal{\tld A}}
\newcommand{\PDO}{$\Psi$DO}
\newcommand{\R}{\Real}
\newcommand{\eps}{\varepsilon}
\newcommand{\les}{\lesssim}
\newcommand{\MO}{\mathcal{O}}
\newcommand{\MA}{\mathcal{A}}
\newcommand{\MB}{\mathcal{B}}
\newcommand{\ssubset}{\subset\subset}
\newcommand{\Schw}{\mathscr{S}}
\newcommand{\E}{\mathscr{E}}
\newcommand{\B}{\mathcal{B}}
\newcommand{\J}{\jap{(\xi,\eta)}}
\newcommand{\xie}{\xi,\eta}
\newcommand{\dxie}{\partial_{\xie}}
\newcommand{\dxy}{\partial_{x,y}}
\newcommand{\xyx}{(x,y,\xi,\eta)}
\newcommand{\deta}{\partial_{\eta}}
\newcommand{\jeta}{\jap{\eta}}
\newcommand{\tO}{\mathcal{ \tld O}}
\newcommand{\SDE}{S_{\delta_5}}
  \DeclareMathOperator{\supp}{supp}
\newcounter{theorem_counter}
\newtheorem*{thm*}{Theorem}
\newtheorem{thm}{Theorem}
\newtheorem{lem}[thm]{Lemma}
\newtheorem{prop}[thm]{Proposition}
\newtheorem{rem}[thm]{Remark}
\newtheorem{coro}[thm]{Corollary}
\newtheorem{defn}[thm]{Definition}
\newcommand{\info}[1]{\todo[inline, linecolor==blue,backgroundcolor=blue!25,bordercolor=blue]{Timur, March 28: #1}}
\newcommand{\luda}[1]{\todo[inline, linecolor==orange,backgroundcolor=orange!25,bordercolor=blue]{Luda, April 2016: #1}}
\newcommand{\comment}[1]{\todo[inline]{#1}}
\newcommand{\hide}[1]{}  
\newtheorem{example}{Example}
\date{\today}
\begin{document}
\title{Hypoellipticity without loss of derivatives for Fedi\u{\i}'s type operators}
\author{Timur Akhunov}
\address{Department of Mathematical Sciences\\
Binghampton University\\
PO Box 6000,
Binghamton, NY13902-6000, USA}
\author{Lyudmila Korobenko}
\address{Mathematics Department\\
Reed College\\
3203 Southeast Woodstock Boulevard\\
Portland, OR 97202-8199, USA}
\author{Cristian Rios}
\address{University of Calgary\\
Department of Mathematics and Statistics\\
2500 University Dr. NW\\
Calgary, AB, T2X 3B5, Canada}
\keywords{hypoellipticity, infinite vanishing, loss of derivatives}
\subjclass{35H10, 35H20, 35S05, 35G05, 35B65, 35A18}
\thanks{The third author is supported by the Natural Sciences and
Engineering Research Council of Canada.}
%
\begin{abstract} We prove that second order linear operators on $\mathbb{R}^{n+m}$ of the form  $L(x,y,D_x,D_y) = L_1(x,D_x) + g(x) L_2(y,D_y)$, where $L_1$ and $L_2$  satisfy Morimoto's super-logarithmic estimates and  $g$ is smooth, nonnegative, and vanishes only at the origin in $\mathbb{R}^n$ (but to any arbitrary order) are hypoelliptic without loss of derivarives. We also show examples in which our hypotheses are necessary for hypoellipticity.
\end{abstract}
\maketitle
\section{Introduction}
\info{I am using todo comments, here is the reference
\url{http://tex.stackexchange.com/questions/9796/how-to-add-todo-notes}\\
uncomment  \% usepackage[disable]\{todonotes\} - to disables notes
%
}
\luda{Comment 1}
The regularity question, or whether singular solutions are possible, is central to the study of PDEs. More precisely, a differential operator $P=\sum_{\alpha} a_{\alpha}(x)\partial_x^{\alpha}$ is (locally) \textit{hypoelliptic}, if
\begin{equation}
  \label{hyp1} u \text{ is smooth near $x_0$, whenever }Lu\text{ is smooth near } x_0.
\end{equation}
Equivalently, the formal inverse operator $L^{-1}$ preserves smoothness. The question of which operators are hypoelliptic goes back Laurent Schwartz in the 50s \cite{Sch50, Sch63}. Lars H\"{o}rmander classified constant coefficient operators in \cite{Hor55}. For variable coefficients the classification is incomplete even for the operators of the form
\begin{align}\label{elliptic}
  L u = -\sum_{i,j=1}^n a_{ij}(x)\partial_{x_i}\partial_{x_j}+\sum_{i=1}^n a_i(x)\partial_{x_i}+a_0(x) \text{, for matrix $(a_{ij})\ge 0$}
\end{align}
Such differential operators are called (degenerately) elliptic and generalize the Laplace operator $L=\Delta$. These operators are among the most important and intensely studied ones. The non-degenerate case of \eqref{elliptic}, or $0<\lambda \le (a_{ij}(x)) \le \Lambda<\infty$ closely resembles the Laplacian $\Delta$, and in particular every solution of $Lu =0$ must be analytic. The degenerate case of \eqref{elliptic} may fail to be hypoelliptic, i.e. there may exist non-smooth functions $u$ such that $Lu$ is smooth.\\
Our goal is to prove hypoellipticity for a wide class of linear degenerate elliptic operators. Starting with the famous H\"{o}rmander bracket condition \cite{Hor67Brac}, there has been a lot of theory built trying to classify hypoelliptic operators.
It is known that H\"{o}rmander's bracket condition is equivalent to the following estimate
\begin{equation}\label{subell}
||u||_{H^{\varepsilon}}:=||\jap{\xi}^{\varepsilon}\hat{u}(\xi)||_{L^{2}}\leq (Lu,u)+C||u||_{L^{2}}
\end{equation}
called subellipticity. Here $\hat{u}(\xi)$ denotes the Fourier transform of the function $u(x)$, and $\jap{\xi}:=(1+|\xi|^{2})^{1/2}$.
It is known that subelliptic operators are hypoelliptic, however the latter class is much wider. For example, it includes operators of the form $\partial_{x}^{2}+g(x)\partial_{y}^{2}$, where the function $g(x)$ is allowed to vanish at a point together with all its derivatives. It has been shown by Fedi\u{\i} \cite{Fedii71} that such operators are hypoelliptic independently of the order of vanishing of $g(x)$. Moreover, this result has been generalized by Kohn in \cite{Kohn98} (see also \cite{KoR14}) to include operators of the form
\begin{equation}\label{form}
 L(x,y,D_x,D_y)=L_1(x,D_x)+g(x)L_2(y,D_y)
\end{equation}
where $x\in \mathbb{R}^{m}$, $y\in \mathbb{R}^{n}$, and the operators $L_1$ and $L_2$ are subelliptic in their variables. Again, the function $g(x)$ was allowed to vanish at isolated points together with all its derivatives. Using H\"{o}rmander's bracket condition it is easy to check that the operator $L$ defined by (\ref{form}) is not subelliptic for infinitely vanishing $g(x)$. In particular, estimate (\ref{subell}) does not hold for all test functions $u$. On the other hand in \cite{Mor87} (see also \cite{Christ01}) Morimoto established the following sufficient condition for the hypoellipticity of an operator $L$ of the form (\ref{elliptic}), which we call the superlogarithmic estimate: for all $\eps>0$ and $K\Subset R^m$
\begin{equation}\label{weight_est}
||w(\xi)\hat u(\xi)||^2 \leq \varepsilon\,\mathrm{Re}(L u,u) + C_{\varepsilon,K} ||u||^2,\quad u\in C_{0}^{\infty}(K),
\end{equation}
where $w(\xi)$ is a function satisfying $w(\xi)\ge \log\left(1+|\xi|^2\right)$.
Note that the subelliptic estimate (\ref{subell}) can be thought of as (\ref{weight_est}) with $w(\xi)=\jap{\xi}^{\varepsilon}$. Thus, roughly speaking, more degenerate operators satisfy estimates of the form (\ref{weight_est}) with smaller weights $w(\xi)$.
Inspired by these results we were interested in constructing the widest collection of linear operators of the form (\ref{form}) that are hypoelliptic, i.e. $u\in C^{\infty}$ whenever $Lu\in C^{\infty}$ and $L$ has smooth coefficients. It is known that simply requiring $L_1$ and $L_2$ to be hypoelliptic in their variables is not sufficient, examples can be found in \cite{Kusuoka-Strook84} and \cite{Mor87}\info{More explanation here?}; see also Example \ref{ex-necessity} below.

We consider two linear (degenerately) elliptic operators
\begin{align}
  \label{L1}
  L_1 = -\sum_{j,k=1}^n a_{jk}(x)\dxp{j}\dxp{k} +\sum_{j=1}^n a_j(x) \dxp{j} + a_0(x)
\end{align}
where $x\in \R^n$, $a_{jk}$, $a_j$ are smooth real functions and $a_{jk}$ is a non-negative matrix:
\begin{align*}
  \sum_{j,k=1}^n a_{j,k}(y) \eta_j \eta_k \ge 0
\end{align*}
and similarly
\begin{align}
  \label{L2}
  L_2 = -\sum_{j,k=1}^m b_{jk}(y)\dyp{j}\dyp{k} +\sum_{j=1}^m b_j(y) \dyp{j} + b_0(y)
\end{align}
where $y\in \R^m$,  $b_{jk}$, $b_j$ are smooth real functions and $b_{jk}$ is a non-negative matrix:
\begin{align*}
  \sum_{j,k=1}^m b_{j,k}(y) \eta_j \eta_k \ge 0
\end{align*}
We were looking for conditions on each $L_i$, $i=1,2$ which are close to necessary for hypoellipticity, but are still strong enough to yield the hypoellipticity of $L_1+g(x)L_2$. The (degenerate) function $g(x)$ is assumed to be smooth, non-negative, and it does not vanish for $x\neq 0$:
\begin{align}\label{g}
   g\in C^\infty(\R^n),\,\, & \,\,g(x) >0 \text{ for } x\neq 0
 \end{align}
 At $x=0$ we allow vanishing, possibly of an infinite type, i.e. $\dx^\alpha g(0) =0$ possibly for all multiindices $\alpha$.
Therefore, we consider the operator
 \begin{align}\label{L}
   L(x,y,D_x,D_y) = L_1(x,D_x) + g(x) L_2(y,D_y).
 \end{align}
The main result of this paper is the following.
\begin{thm}[Main theorem]\label{main}
  Let $L$ be defined by \eqref{L} with assumptions on $L_1$, $L_2$ and $g$ as above, and let $L_1$, $L_2$ satisfy the superlogarithmic estimate, that is for each $\varepsilon>0$ and any compact sets $K_{1}\subset\subset \mathbb{R}^{m}$ and $K_{2}\subset\subset \mathbb{R}^{n}$, there exist constants $C_{\varepsilon, K_{i}}$, $i=1,2$, such that
\begin{equation}\label{superlog}
\begin{split}
  ||\log\jap{\xi}^2 \hat u(\xi)||^2 \leq \varepsilon (L_1 u,u) + C_{\varepsilon,K_1} ||u||^2,\quad
 \forall u\in C_{0}^{\infty}(K_1); \\
  \norm{\log\jap{\eta}^2\hat v(\eta)}^2\le \eps (L_2 v,v) + C_{\eps,K_2}\norm{v}^2 \quad \forall v\in C^\infty_0(K_2); 
\end{split}
\end{equation}
Then $L$ is hypoelliptic.
\end{thm}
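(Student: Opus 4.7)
The plan is to deduce hypoellipticity of $L$ by establishing a joint superlogarithmic estimate and then invoking Morimoto's theorem in all of $\R^{n+m}$: I aim to prove that for each $\eps>0$ and every compact $K\compset\R^{n+m}$,
\begin{equation*}
\norm{\log\jap{(\xi,\eta)}^{2}\hat u}^{2}\le \eps\,\mathrm{Re}(Lu,u)+C_{\eps,K}\norm{u}^{2},\qquad u\in C^{\infty}_{0}(K).
\end{equation*}
Because $\log\jap{(\xi,\eta)}\les \log\jap{\xi}+\log\jap{\eta}$, it suffices to bound the $\xi$-weighted and $\eta$-weighted parts of the left-hand side separately. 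The $\xi$-bound is nearly automatic: apply the $L_{1}$-estimate from \eqref{superlog} slicewise in $y$, integrate in $y$, and use $\mathrm{Re}(L_{1}u,u)\le \mathrm{Re}(Lu,u)+C\norm{u}^{2}$---valid because $g\ge 0$ and $L_{2}$ is nonnegative modulo a first-order perturbation.

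The $\eta$-bound carries the real content. Choose a smooth $x$-partition $\chi_{0}^{2}+\chi_{1}^{2}\equiv 1$ on $\R^{n}$ with $\chi_{1}$ supported where $g\ge c_{\delta}>0$ and $\chi_{0}$ concentrated near the origin. On the support of $\chi_{1}$, the $L_{2}$-superlogarithmic estimate applies slicewise in $x$; the factor $c_{\delta}^{-1}$ then converts the bound involving $(L_{2}u,u)$ into one involving $(gL_{2}u,u)\le \mathrm{Re}(Lu,u)+C\norm{u}^{2}$, with the commutator remainders $[L_{1},\chi_{i}]u$ absorbed by the $\xi$-estimate. The remaining contribution $\chi_{0}u$, supported in a ball $B_{\delta}(0)$ where $g$ may vanish to infinite order, cannot be treated in this direct manner.

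To handle $\chi_{0}u$ I propose a Littlewood--Paley decomposition in $\eta$, $\chi_{0}u=\sum_{k\ge 0}\Delta_{k}(\chi_{0}u)$ with $\Delta_{k}$ localizing to $\abs{\eta}\sim 2^{k}$, together with a scale-dependent refinement of $\chi_{0}$: at frequency $2^{k}$, replace $\chi_{0}$ by a sharper cutoff supported on a smaller ball $B_{\delta_{k}}$, chosen so that $g(x)\gtrsim 2^{-2k}$ outside $B_{\delta_{k}}$. On the annulus $B_{\delta}\setminus B_{\delta_{k}}$ the $L_{2}$-estimate then applies with a loss $2^{2k}$ that is absorbed by the frequency localization; inside the shrinking core $B_{\delta_{k}}$, the $L_{1}$-superlogarithmic estimate forces the contribution to be negligible, since the spatial support is tiny. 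Almost-orthogonality of the $\Delta_{k}$ allows the sum over $k$ to close the $\eta$-estimate.

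The main obstacle lies precisely in this last step. Because $g$ is only assumed strictly positive away from the origin and is allowed to vanish to infinite order there, the spatial scale $\delta_{k}$ may shrink arbitrarily slowly, and the dyadic bookkeeping must remain compatible with the logarithmic $\eta$-weight. This is exactly where the superlogarithmic---rather than merely logarithmic---strength of the $L_{1}$ hypothesis becomes essential; verifying the compatibility of the Littlewood--Paley calculus with the $\log\jap{\eta}$-weight at these extremely degenerate spatial scales is the technical heart of the proof.
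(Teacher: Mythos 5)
Your overall strategy cannot prove the theorem as stated: you propose to establish the joint superlogarithmic estimate
$\norm{\log\jap{(\xi,\eta)}\hat u}^{2}\le \eps\,\mathrm{Re}(Lu,u)+C_{\eps,K}\norm{u}^{2}$
for $L=L_{1}+g(x)L_{2}$ and then invoke Morimoto's sufficiency theorem, but this estimate is \emph{false} once $g$ is sufficiently degenerate, and the theorem allows $g$ to vanish to infinite order. (The paper notes this explicitly right after the statement of the main theorem.) To see the failure concretely, take $L_{1}=-\partial_{x}^{2}$, $L_{2}=-\partial_{y}^{2}$, $g(x)=e^{-2/|x|}$, and test on $u_{\tau}(x,y)=v(x\log\tau)\,e^{i\tau y}w(y)$: the left side is $\gtrsim \log^{2}\tau\,\norm{u_{\tau}}^{2}$, while $\mathrm{Re}(Lu_{\tau},u_{\tau})\approx\bigl(\log^{2}\tau+\tau^{2}e^{-2\log\tau}\bigr)\norm{u_{\tau}}^{2}\approx C'\log^{2}\tau\,\norm{u_{\tau}}^{2}$, so choosing $\eps$ small and $\tau\to\infty$ contradicts the estimate. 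This is exactly the Fedi\u{\i} phenomenon the paper is designed around: $L$ is hypoelliptic even though every estimate of Morimoto type with the full weight $\log\jap{(\xi,\eta)}$ fails near $x=0$. Consequently no decomposition can rescue your plan, because the target inequality itself is not true. The quantitative failure shows up precisely at the step you flag as the technical heart: inside the shrinking core $B_{\delta_{k}}$, smallness of the spatial support buys (via Poincar\'e or the $L_{1}$ superlog estimate) only a gain of order $\log^{-2}(1/\delta_{k})$, whereas the weight at frequency $|\eta|\sim 2^{k}$ is $k^{2}$; for infinitely degenerate $g$ (already $g=e^{-1/|x|}$) the radius defined by $g(\delta_{k})\approx 2^{-2k}$ decays only like $1/k$, so $\log^{2}(1/\delta_{k})\approx\log^{2}k\ll k^{2}$ and the core contribution is not negligible.

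Your treatment of the nondegenerate region (the cutoff $\chi_{1}$ where $g\ge c_{\delta}>0$, with the loss $c_{\delta}^{-1}$ absorbed into $(gL_{2}u,u)$) does mirror what the paper does there, namely Lemma \ref{hypo_away_2}, where Morimoto's theorem is applied only away from $x=0$. But for the degenerate core the paper takes a fundamentally different route: it never proves any estimate with the weight $\log\jap{\eta}$ on functions supported near $x=0$. Instead it introduces the Christ-type operator $\Lambda$ with symbol $\jap{\xi,\eta}^{s}\exp\bigl(-(N+s+3)\phi\log\jap{\xi,\eta}\bigr)S_{\delta_{5}}$, where $\phi$ vanishes in a small ball $|x|\le\delta_{2}$, so that $\Lambda$ has order $s$ only in the degenerate region and is strongly smoothing elsewhere; the a priori estimate $\norm{\Lambda u}\les\norm{\Lambda Lu}+\norm{u}_{-N}$ is then proved using a Poincar\'e inequality in $x$ (yielding the small constant $\delta_{1}\sim\log^{-2}\delta_{2}$), the superlog estimate for $L_{1}$ alone, the superlog estimate for $L_{2}$ only with the weight $\sqrt{g}$ or on functions supported where $|x|\ge\delta_{2}$, and a lengthy commutator analysis exploiting that $\dx^{\alpha}\phi$ vanishes on the degenerate set. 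If you want a proof along the lines of the hypotheses actually available, you need an argument of this a priori--estimate type rather than a global superlogarithmic estimate for $L$.
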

For $g(x)$ sufficiently degenerate, the operator $L$ will violate the Morimoto's superlogarithmic estimate \eqref{superlog}, yet still be hypoelliptic.
The techniques of the proof rely mainly on pseudodifferential calculus and clever use of the operator $\Lambda$ introduced by Michael Christ in \cite{Christ01}. The general idea of the proof is similar to the previous results \cite{Kohn98} and \cite{Christ01}, however, the analysis is more delicate due to the presence of infinite degeneracy and the absence of subelliptic estimates at the same time. Moreover, the operator we consider is not of the form of sums of squares of vector fields, c.f. \cite{Hor67Brac, Christ01}.
\info{Sum of squares in more detail}
\subsection{Necessity}

As mentioned above, Morimoto in \cite{Mor87}, motivated by probabilistic techniques of \cite{Kusuoka-Strook84}, found that a symmetric operator $\dx^2 + L_2$ has to satisfy estimate  \eqref{superlog} in order to be hypoelliptic.
In the case of our operator $L=L_1+g(x)L_2$, the validity of superlog estimates for $L_1$ and $L_2$ is also necessary for hypoellipticity as the following example due to Kusuoka and Stroock \cite{Kusuoka-Strook84} demonstrates.
\begin{example}\label{ex-necessity}
 Let $L_1:=\partial_{x_{1}}^{2}+a^2(x_1)\partial_{x_{2}}^{2}$, $L_2:=\partial_{y_{1}}^{2}$, and $g(x_{1},x_{2})\equiv 1$. Assume that the function $a(x_1)$ is smooth, nonnegative and satisfies $a(0)=0$ and $a(x_1)\neq 0$ if $x_1\neq 0$. $L_1$ is thus a Fedi\u{\i} type operator. It is has been shown by Kusuoka and Stroock, and later by Morimoto, that the operator $L=L_1+L_2$ is hypoelliptic iff
 \begin{equation}\label{a_cond}
 \lim_{x\to 0}x\log a(x)=0.
 \end{equation}
 It has been later shown by Christ \cite[Lemma 5.2]{Christ01} that (\ref{a_cond}) is equivalent to
  \begin{equation}\label{lambda_cond}
 \lim_{\tau\to \infty}\frac{\lambda_{0}(\tau)}{\log \tau}=\infty,
 \end{equation}
 where $\lambda_0(\tau)$ is the principal eigenvalue of $L_{\tau}:=-\dx^2+\tau^2 a^2(x)$, i.e.
 $$
 \lambda_0(\tau)=\inf\frac{\langle L_{\tau} f, f\rangle^{1/2}}{||f||_{L^2}}
 $$
 where the infimum is taken over all $0\neq f\in C_{0}^{2}(\mathbb{R})$. We will show that (\ref{lambda_cond}) implies the superlogarithmic estimate for $L_1$
 \begin{equation}\label{superlog_l_1}
||\log\left(1+|\xi_{1}|^2+|\xi_{2}|^2\right)\cdot \hat u(\xi_1,\xi_2)||^2 \leq \varepsilon (L_{1} u,u) + C_{\varepsilon,K} ||u||^2,\quad u\in C_{0}^{\infty}(K),\ \ K\subset\subset \mathbb{R}^{2}.
\end{equation}
To estimate the left hand side we divide the integral into two
\begin{align*}
\int\int \log^{2}\langle\xi_{1}, \xi_{2}\rangle\hat u(\xi_{1},\xi_{2})^2 d\xi_{1} d\xi_{2}= &\int\int_{|\xi_{1}|>|\xi_{2}|} \log^{2}\langle\xi_{1}, \xi_{2}\rangle\hat u(\xi_{1},\xi_{2})^2 d\xi_{1} d\xi_{2} \\
&+ \int\int_{|\xi_{1}|\leq|\xi_{2}|} \log^{2}\langle\xi_{1}, \xi_{2}\rangle\hat u(\xi_{1},\xi_{2})^2 d\xi_{1} d\xi_{2}=:I + II
\end{align*}
For the first integral we have
\begin{align*}
I&\lesssim \int\int_{|\xi_{1}|>|\xi_{2}|} \log^{2}\langle\xi_{1}\rangle\hat u(\xi_{1},\xi_{2})^2 d\xi_{1} d\xi_{2}\lesssim \int \int (\varepsilon \langle\xi_{1}\rangle^{2} + \frac{1}{\varepsilon} )\hat u(\xi_{1},\xi_{2})^2 d\xi_{1} d\xi_{2}\\
&=\varepsilon\int\int (\partial_{x_{1}}u)^2 dx_1 dx_2 + \frac{1}{\varepsilon} \int\int u(x,y)^2 dx_1 dx_2\leq \varepsilon(L_1 u, u) + \frac{1}{\varepsilon}||u||^2
\end{align*}
where we used Plancherel and the definition of $L_1$.

For the second integral we have
\begin{align}\label{nec_log_est}
II&=\int\int_{|\xi_{1}|\leq|\xi_{2}|} \log^{2}\langle\xi_{1}, \xi_{2}\rangle\hat u(\xi_{1},\xi_{2})^2 d\xi_{1} d\xi_{2}\leq C \int\int\log^{2}\langle \xi_{2}\rangle\hat u(\xi_{1},\xi_{2})^2 d\xi_{1} d\xi_{2}\\
&= C \int\int\log^{2}\langle \xi_{2}\rangle\mathcal{F}_{\xi_{2}} (u)(x_1,\xi_{2})^2 d x_1 d\xi_{2}
\end{align}
where $\mathcal{F}_{\xi_{2}}$ stands for the partial Fourier transform in the second variable. Next, by definition of $\lambda_0$ we have for $f(x_1,\xi_{2}):=\mathcal{F}_{\xi_{2}} (u)(x_1,\xi_{2})$
\begin{align*}
\int f(x_1,\xi_{2})^2 dx_1 & \leq \frac{1}{\lambda_{0}(\xi_{2})^{2}}\langle L_{\xi_{2}} f(x_1,\xi_{2}), f(x_1,\xi_{2})\rangle_{x_1} \\
&=- \frac{1}{\lambda_{0}(\xi_{2})^{2}}\langle \mathcal{F}_{\xi_{2}} (L_1 u)(x_1,\xi_{2}), \mathcal{F}_{\xi_{2}} (u)(x_1,\xi_{2})\rangle_{x_1}
\end{align*}
Combining with (\ref{nec_log_est}) we obtain by Plancherel and (\ref{lambda_cond})
\begin{align*}
II\leq C \int \int \frac{\log^{2}\langle \xi_{2}\rangle}{\lambda_{0}(\xi_{2})^2}  (\mathcal{F}_{\xi_{2}} (L_1 u)(x_1,\xi_{2}), \mathcal{F}_{\xi_{2}} (u)(x_1,\xi_{2}))dx_1 d\xi_{2}
\leq \varepsilon \langle L_1 u, u \rangle + C_{\varepsilon} ||u||^2
\end{align*}

\end{example}

\subsection{Outline of the paper}
The paper is organized as follows. Section \ref{proof of sufficiency} is dedicated to the proof of the main sufficiency result Theorem \ref{main}. First, in Section \ref{reduction} we reduce Theorem \ref{main} to Theorem \ref{main-Lambda} which makes use of a special pseudodifferential operator $\Lambda$.
 Then, in Section \ref{symbol classes} we introduce relevant symbol classes and derive some of their properties relevant to our estimates. Section \ref{main proof} outlines the proof of Theorem \ref{main-Lambda}, and sections \ref{poincare ineq} and \ref{commutators} are dedicated to the proofs of the main technical tools, Poincar\'{e} inequality and commutator estimates. Finally, in the Appendix we provide some auxiliary results from calculus of pseudodifferential operators, and basic PDE theory, that are used throughout the paper.

\section{Proof of sufficiency}\label{proof of sufficiency}
\subsection{Compactly supported distributions}
We first claim that to show hypoellipticity it is sufficient to consider compactly supported distributions $u\in \E'$. In fact it is sufficient to consider functions supported near the degenacy of $g(x)$. The following definition and two lemmas make the satement precise.
\begin{defn}\label{local_hypo_def}
We say that the operator $L$ is {\bf locally $H^s$ hypoelliptic} near $(x,y)\in\R^{n+m}$ if for any $u\in D'$ and $\phi\in C^{\infty}_{0}$, $\phi\equiv 1$ in a neighborhood of $(x,y)$ satisfying $\phi Lu\in H^{s}$, there exists $\tld \phi\in C^{\infty}_{0}$, $\tld\phi\equiv 1$ in a (possibly smaller) neighborhood of $(x,y)$, such that $\tilde{\phi}u\in H^{s}$.
	\end{defn}
\begin{lem}\label{hypo_away_1}
 Let the operator $L$ satisfy Definition \ref{local_hypo_def} in a neighborhood of $(x_0,y_0)\in\R^{n+m}$ but only for compactly supported distributions. Then $L$ is locally $H^s$ hypoelliptic near $(x_0,y_0)\in\R^{n+m}$.
\end{lem}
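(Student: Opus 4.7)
The plan is to reduce to the compactly supported case by cutting off $u$ with an auxiliary bump, and to show that the error created by this cutoff is harmless because the commutator $[L,\psi]$ is a differential operator whose coefficients vanish where $\psi$ is locally constant. First I would set up nested shrinking neighborhoods of $(x_0,y_0)$: choose open neighborhoods $V_3 \subset \overline{V_3}\subset V_2\subset \overline{V_2}\subset V_1\subset \overline{V_1}\subset V_0$ all contained in $\{\phi\equiv 1\}$, and pick $\psi\in C^\infty_0$ with $\psi\equiv 1$ on $V_1$ and $\supp\psi\subset V_0$. Then $v:=\psi u\in \E'$ is a compactly supported distribution to which the hypothesis may be applied.

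Next I would compute $Lv = \psi\, Lu + [L,\psi]u$. Since $L$ is a second order differential operator with smooth coefficients, $[L,\psi]$ is a first order differential operator whose coefficients are smooth and are supported in $\supp(\nabla\psi)\subset V_0\setminus V_1$. Now select $\phi_1\in C^\infty_0$ with $\phi_1\equiv 1$ on $V_3$ and $\supp\phi_1\subset V_2\subset V_1$. Then $\phi_1[L,\psi]u=0$ as a distribution, because the coefficients of $[L,\psi]$ vanish on $V_1\supset\supp\phi_1$. Moreover, $\phi_1\psi\, Lu = (\phi_1\psi)\cdot(\phi\, Lu)$ since $\phi\equiv 1$ on $\supp(\phi_1\psi)\subset V_0$; multiplying the hypothesized element $\phi\,Lu\in H^s$ by the smooth compactly supported function $\phi_1\psi$ keeps it in $H^s$. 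Therefore $\phi_1\,Lv\in H^s$.

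Applying the compactly-supported form of local $H^s$-hypoellipticity (the hypothesis of the lemma) to $v\in \E'$ with the cutoff $\phi_1$ yields $\tld\phi_0\in C^\infty_0$ with $\tld\phi_0\equiv 1$ on some neighborhood $W$ of $(x_0,y_0)$ and $\tld\phi_0 v\in H^s$. Finally I would pick $\tld\phi\in C^\infty_0$ with $\tld\phi\equiv 1$ on a neighborhood of $(x_0,y_0)$ contained in $W\cap V_1$ and $\supp\tld\phi\subset W\cap V_1$. Then on $\supp\tld\phi$ we have $\psi\equiv 1$ and $\tld\phi_0\equiv 1$, so
\[
\tld\phi\, u \;=\; \tld\phi\,\psi u \;=\; \tld\phi\, v \;=\; \tld\phi\,(\tld\phi_0 v),
\]
and the right-hand side lies in $H^s$ as the product of the smooth compactly supported $\tld\phi$ with $\tld\phi_0 v\in H^s$. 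This yields the conclusion of Definition \ref{local_hypo_def}.

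The argument is essentially a bookkeeping exercise in cutoffs; no delicate estimate is needed. The only step that deserves care is verifying that $[L,\psi]u$ vanishes on $\supp\phi_1$ as a distribution, which rests on the fact that $L$ has smooth coefficients so that the commutator is a bona fide differential operator whose coefficients are products of derivatives of $\psi$ with coefficients of $L$. The main obstacle, if any, is just to choose the nested neighborhoods so that every product of cutoffs appearing in the computation is supported where the appropriate function is identically $1$; once the ordering $V_3\subset V_2\subset V_1\subset V_0$ is fixed, the rest is automatic.
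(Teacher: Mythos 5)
Your proposal is correct and follows essentially the same route as the paper: replace $u$ by the compactly supported $\psi u$, observe that the cutoff error (the commutator $[L,\psi]u$) is annihilated by an inner cutoff supported where $\psi\equiv 1$, apply the compactly supported hypothesis, and then shrink to a neighborhood where all cutoffs equal $1$. The paper phrases this more compactly by using a single pair $\phi\Subset\phi^{*}$ so that $\phi L(\phi^{*}u)=\phi Lu$ directly, but the underlying argument is the same.
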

As $H^\infty =\cap_s H^s$, Lemma \ref{hypo_away_1} for all $s$ implies local hypoellipticity in the sense of \eqref{hyp1}.
\begin{proof}
 Let $v\in D'$, not necessarily compactly supported, and suppose that $\phi Lv\in H^{s}$ for some $\phi\in C^{\infty}_{0}$. Let $\phi^{*}\in C^{\infty}_{0}$ be such that $\phi\subset\subset\phi^{*}$. Since $\phi^{*}=1$ on the support of $\phi$ we then have $\phi L(\phi^{*}v)=\phi Lv\in H^{s}$, and obviously $\phi^{*}v\in \E'$. Therefore, since $L$ is locally $H^s$ hypoelliptic with compactly supported distributions, there exists $\tilde{\phi}\subset\subset\phi$ and $\tld \phi =1$ near $(x_0,y_0)$ such that $\tilde{\phi}v=\tilde{\phi}\phi^{*}v\in H^{s}$. This concludes that $L$ is locally $H^s$ hypoelliptic near $(x_0,y_0)$.
\end{proof}

\begin{lem}\label{hypo_away_2}
 Let the operator $L$ be defined as in (\ref{L}) and let the assumptions of Theorem \ref{main} hold. Assume that $v\in \E'$ and there exists a constant $a>0$ such that for each $(x,y)\in \mathrm{supp}\, v$ we have $|x|>a$, i.e. $v$ is supported away from the degeneracy set of $g(x)$. If $\phi Lv\in H^{s}$ for some $\phi\in C^{\infty}_{0}$, $\phi\equiv 1$ in a neighborhood of $(0,y_0)$, then there exists $\tld \phi\in C^{\infty}_{0}$, $\tld\phi\equiv 1$ in a neighborhood of $(0,y_0)$, such that $\tilde{\phi}u\in H^{s}$.
\end{lem}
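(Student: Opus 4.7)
The conclusion of the lemma mentions a distribution $u$ that was never introduced—only $v$ appears in the hypotheses—so I read $\tilde\phi u\in H^s$ as $\tilde\phi v\in H^s$, interpreting the $u$ as a typographical slip. Under this reading the lemma is essentially a statement of disjoint supports: because $\supp v\subset\{(x,y):|x|>a\}$ while the target point $(0,y_0)$ lies in the degeneracy locus $\{x=0\}$, any sufficiently small neighborhood of $(0,y_0)$ in $\R^{n+m}$ is automatically disjoint from $\supp v$.

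The plan is therefore a short support-separation argument in three steps. First I would observe that the Euclidean distance from $(0,y_0)$ to $\supp v$ is at least $a$, which is immediate from the support hypothesis on $v$ (the first coordinate block of $(0,y_0)$ is $0$, while $|x|>a$ on $\supp v$). Next I would choose a cutoff $\tilde\phi\in C_0^\infty(\R^{n+m})$ supported in the ball of radius $a/2$ around $(0,y_0)$ and identically $1$ on the ball of radius $a/4$; since the larger ball lies inside $\{(x,y):|x|<a/2\}\subset\{|x|<a\}$, this forces $\supp\tilde\phi\cap\supp v=\emptyset$. Finally I would conclude that $\tilde\phi v=0$ as an element of $\D'(\R^{n+m})$, whence $\tilde\phi v\in H^s(\R^{n+m})$ for every $s\in\R$.

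Under this reading the hypothesis $\phi Lv\in H^s$ plays no role in the argument; it appears to be retained only to align the statement with the format of Definition \ref{local_hypo_def}, so that Lemma \ref{hypo_away_2} fits cleanly into the reduction machinery initiated in Lemma \ref{hypo_away_1}. The main obstacle I foresee is interpretational rather than analytic: if the $u$ in the conclusion were intended as a distribution genuinely distinct from $v$—say the original $u\in\D'$ whose regularity is being probed in the broader hypoellipticity argument—one would need further context relating $u$ to $v$, and the proof would then have to combine the support information on $v$ with local hypoelliptic regularity of $L$ on the open set $\{(x,y):g(x)>0\}$, where $g$ is bounded below and the superlogarithmic estimates \eqref{superlog} for $L_1$ and $L_2$ assemble into a superlogarithmic estimate for $L$ via Morimoto's theorem. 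Under the straightforward typo reading, however, the three-step argument above is complete and requires no analytic input beyond the support condition on $v$.
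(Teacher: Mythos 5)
Your reading of the statement is correct, and your three-step support-separation argument is a valid proof of the lemma \emph{as literally written}: since $\supp v\subset\{|x|>a\}$ while $(0,y_0)$ lies on $\{x=0\}$, any $\tilde\phi$ supported in $B_{a/2}((0,y_0))$ gives $\tilde\phi v\equiv 0\in H^s$, and the hypothesis $\phi Lv\in H^s$ is not used. However, the paper's own proof does not take this route; it takes exactly the alternative you sketch in your final paragraph. Specifically, the paper chooses $\zeta\in C^\infty_0$ with $\zeta\equiv 1$ on $V:=\supp v$ and $\zeta\equiv 0$ on $\{|x|<a/2\}$, sets $\tilde L:=\zeta L$ (so $\tilde Lv=Lv$), and proves that $\tilde L$ satisfies the superlogarithmic estimate on $V$: for $u\in C^\infty_0(K)$ with $K\Subset V$, one applies the superlog estimate for $L_1$ in $x$ and for $L_2$ in $y$ separately, using $g(x)\ge g_a:=\inf_{|x|>a/2}g>0$ on $V$ to absorb $L_2$ into $g L_2$, integrates in the remaining variable, and adds, yielding $\|\log\langle\xi,\eta\rangle\hat u\|^2\le \varepsilon(\tilde L u,u)+C_{\varepsilon,K}\|u\|^2$. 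Morimoto's Theorem 1 then gives hypoellipticity of $\tilde L$ on $V$. This is the substantive content, which the subsequent Remark records as ``$L$ is hypoelliptic away from $x=0$''; as you correctly observe, the lemma's literal conclusion, being pinned at $(0,y_0)$ rather than at a point of $\supp v$, is weaker than the intended content and is trivialized by the support hypothesis, which is why the paper's proof and the literal statement appear mismatched.
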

\begin{proof}
Let $\zeta\in C^{\infty}_{0}$ and $\zeta=1$ on $\mathrm{supp}\,v=:V$, and without loss of generality we may assume that $\zeta(x,y) = 0$ for $|x|<a/2$. We then have $\zeta L v = Lv$, and we denote $\tilde{L}:=\zeta L$. We now claim that operator $\tilde{L}$ satisfies the superlogarithmic estimate, i.e.
\begin{equation}\label{superlog_away}
 ||\log\left\langle\xi,\eta\right\rangle\cdot \hat u(\xi,\eta)||^2 \leq \varepsilon (\tilde{L} u,u) + C_{\varepsilon,K} ||u||^2,\quad u\in C_{0}^{\infty}(K),
\end{equation}
for each $\varepsilon>0$ and any compact set $K\subset\subset V$.
To show that let $\inf_{|x|>a/2}g(x)=g_{a}>0$, and let $K_{1}\subset\subset \mathbb{R}^{m}$ and $K_{2}\subset\subset \mathbb{R}^{n}$ satisfy $K\subset\subset K_{1}\times K_{2}\subset\subset V$. Then $u\in C_{0}^{\infty}(K)$ restricted to each variable, belongs to $C_{0}^{\infty}(K_{i})$ and moreover $\tilde{L}u=L_{1}u+gL_{2}u$. We now apply the superlog estimate to each $L_1$ and $L_2$
\begin{align*}
  ||\log(1+|\xi|^{2})\cdot \mathcal{F}_{\xi}(u)(\xi,y)||^2 &\leq \frac{\varepsilon}{2} (L_{1} u,u)_{x} + C_{\varepsilon,K_{1}} ||u(\cdot,y)||^2,\\
  ||\log(1+|\eta|^{2})\cdot \mathcal{F}_{\eta}(u)(x,\eta)||^2 &\leq \frac{\varepsilon\cdot g_{a}}{2} (L_{2} u,u)_{y} + C_{\varepsilon,a,K_{2}} ||u(\cdot,y)||^2\\
  &\leq \frac{\varepsilon}{2} (g(x)L_{2} u,u)_{y} + C_{\varepsilon,a,K_{2}} ||u(\cdot,y)||^2.
\end{align*}
Integrating each inequality in the second variable and adding, we obtain (\ref{superlog_away}). We therefore can apply Theorem 1 of \cite{Mor87} to conclude that $\tilde{L}$ is hypoelliptic in $V$. Recalling the definition of $\tilde{L}$ concludes the proof.
\end{proof}

\begin{rem}
 Lemmas \ref{hypo_away_1} and \ref{hypo_away_2} effectively say that that the operator $L$ from Theorem \ref{main} is hypoelliptic away from $x=0$ based on earlier work of Morimoto \cite{Mor87} ( and Christ \cite{Christ01}, in the case of sum of squares). This is done for convenience, as many arguments are more delicate in the case of $g(0)=0$. However, if desired, one can proceed with our argument, even if $g(0)\neq 0$. In particular, shifting coordinates makes our argument self-contained and not relying on Lemma \ref{hypo_away_2}.
\end{rem}
\begin{rem}
  \label{bounded-coeff} Lemma \ref{hypo_away_1} reduces the question of local hypoellipticity to a neighborhood of a point. Thus, when needed we may change the operator $L$ away from a specified neighborhood. In particular, by changing coefficients of $L$ away from a large compact set, we may assume that all the coefficient functions are bounded with bounded derivatives. I.e. $\norm{g}_{W^{\infty,\infty}}<\infty$, $\norm{a_{ij}}_{W^{\infty,\infty}}<\infty$, etc.
\end{rem}

\subsection{Reduction to $\Lambda$}\label{reduction}
\subsubsection{Definition of $\Lambda$}
To recast the main Theorem \ref{main} in terms of the special operator $\Lambda$ we first introduce a relevant cutoff function
\begin{defn}[Fixing cutoffs]\label{cut-def}
  Let $\chi_{x}(x)=\begin{cases}
  0, |x| \ge 2\\
  1, |x|\le 1
\end{cases}$ be a compactly supported non-negative function. Let $y_0\in \R^m$ and $\chi_y\in C^\infty_0(\R^m)$ be a bump function that is $1$ in some neighborhood of $y_0$.\\
\end{defn}
\begin{figure}[t]
	\begin{center}
		\includegraphics[scale=0.65,trim={0 7cm 0 4cm},clip]{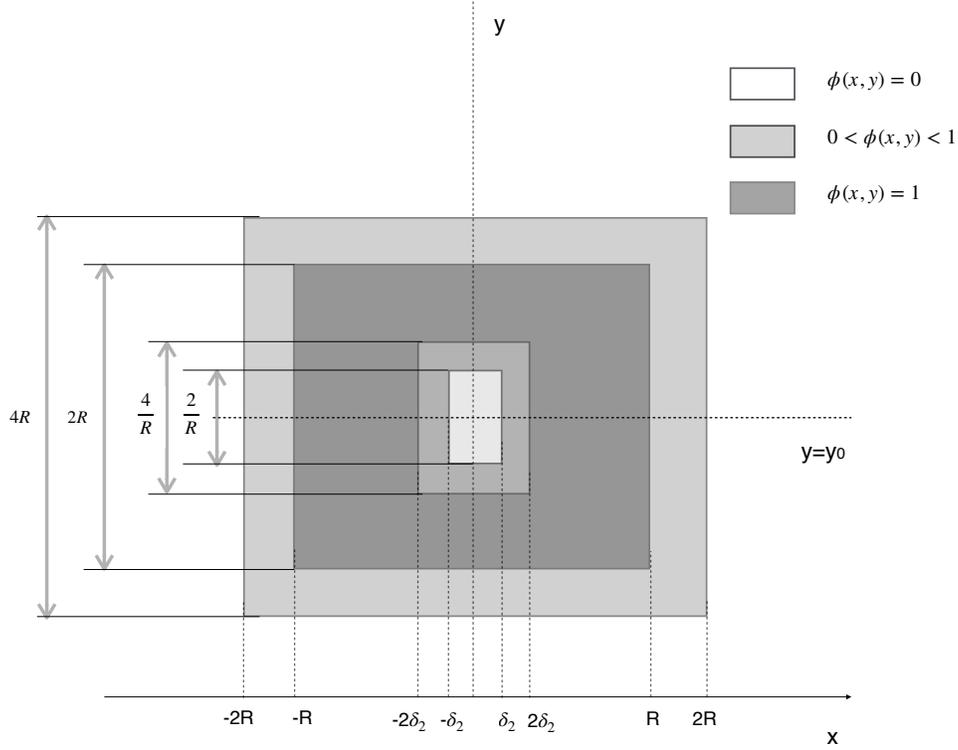}
	\end{center}
	\caption{A sketch of $\phi$}
	\label{phi-pic}
\end{figure}
Given $u\in \E'$ and $(0,y_0) \in \R^n\times \R^m$ let $R\gg 1$ be large enough, so that $\supp u \subset B_R(0)\times B_R(y_0)$. Let $0<\delta_2\le \frac{1}{2R}$ be a small parameter and let

\begin{align}\label{phi}
 \phi(x,y)=\chi_x\left(\frac{x}{R}\right)\cdot \chi_y\left(\frac{y-y_{0}}{R}\right)- \chi_x\left(\frac{x}{\delta_2}\right)\chi_y(R (y-y_0))\in C^\infty_0
\end{align}
be a localization near $(0,y_{0})$ (see also Figure ~\ref{phi-pic} on p.\pageref{phi-pic}).
Then
$\phi(x,y)= 1$, whenever $(x,y)$ is away from $B_{\delta_2}(0)\times B_{\frac{1}{R}}(y_0)$ on $\supp u$. In particular, except for a small neighborhood of $(0,y_0)$, $\phi(x,y)=1$ on $\supp \,u$.\\

Moreover, the key property that drives our argument, is that
\begin{align}\label{phi-derivatives}
  \dy^\beta\dx^\alpha\phi(x,y)\equiv 0 \text{ for } |x|\le \delta_2 \text{, for } |\alpha|>0\\
  |\dy^\beta\phi(x,y)|\le C(\beta,R) \text{, for } |\beta|\ge 0
\end{align}
Define a PDO $\Lambda$ by the following symbol
\begin{equation}
 \lambda(x,y,\xi,\eta)=\jap{\xi,\eta}^{s}exp(-(N+s+3) \phi(x,y)\log \jap{\xi,\eta})S_{\delta_{5}}\label{lambda}
\end{equation}
where $S_{\delta_{5}}=\jap{\delta_5\cdot( \xi,\eta)}^{-(N+s+3)}$ is regularization operator of order $-(N+s+3)$, defined in Lemma \ref{s-delta}. In particular, its seminorms of order $0$ are uniformly bounded in $\delta_5$, see Appendix.
\begin{rem}\label{regularization}
  Note, that $S_{\delta_5}$ is dependent on the parameter $\delta_5$, but its $S^0$ seminorms are $S_{\delta_5}$ independent. Other than a statement that if $u\in H^{-N}$, then $S_{\delta_{5}}u\in H^{s+3}$, all estimates from now on are uniform in $\delta_5$ as $\delta_5\to 0$.
\end{rem}
\subsubsection{Cutoff functions}
The operator and symbol $\lambda$ defined in \eqref{lambda} is a crucial ingredient of the paper. In particular, the proof of the Theorem \ref{main} demands close attention to a parameter $\delta_2>0$. This parameter, $\delta_2$ is closely related to the size of the localization in the degenerate regime from $\phi$ in \eqref{phi}. From now on we allow and keep track of dependence of all functions and parameters on $\delta_2$, chief among them $\lambda$.\\

Motivated by \eqref{phi-derivatives}, we want to emphasize two classes of families of bump functions adapted to our degenerate operator \eqref{form}. Roughly, those functions that behave like $\dx^\alpha\phi(x,y)$ for $|\alpha|>0$, and those that do not have $x$ derivatives. More precisely,
\begin{align}\label{tldO}
 \tO_{\delta_2}:=\left\{\tld\psi_{\delta_2}(x,y) \in C^\infty_0: \tld\psi_{\delta_2}(x,y)\equiv 0 \text{ for } |x|< \frac{{\delta_2}}{2} \right\}
\end{align}
As mentioned above, our motivating example is $\dx^\alpha\phi$ with $\phi$ from \eqref{phi}. With this notation $\dx^\alpha\phi \in \tO_{\delta_2}$ for $|\alpha|>1$.\\

We now introduce a second class, of functions, whose support may include the most degenerate region.
\begin{align}
  \label{O} \mathcal{O} :=\left\{\psi_{\delta_2}(x,y) \in C^\infty: \dx^\alpha \psi_{\delta_2}(x,y) \in \tO_{\delta_2} \text{ for } |\alpha|>0 \text{ and } \limsup_{\delta_2\to 0} |\dy^\beta \psi_{\delta_2}(x,y)| < \infty \right\}
\end{align}
The motivating example for $\mathcal{O}$ is $\phi$ from \eqref{phi} with bounds from \eqref{phi-derivatives}. The key difference between $\tO$ and $\mathcal{O}$ classes are the bounds on the functions and their derivatives, which motivates the following remark.
\begin{rem}\label{delta-press}
  All the symbols and functions below, unless explicitly stated otherwise, are dependent on the parameter $\delta_2>0$. For this reason, we suppress the subscript ${\delta_2}$ below in the notation for functions and symbols. In particular, we will speak of $\tO$ implying $\tO_{\delta_2}$.
\end{rem}

We reduce the proof of Theorem \ref{main} to the following
\begin{thm}\label{main-Lambda}
	Let the operator $L$ be defined by (\ref{L}). There exists $\delta_2>0$ small enough such that with
  $\phi$ defined by \eqref{phi} and the PDO $\Lambda$ defined by \eqref{lambda}, we have for any $u\in \E'$ and $N$ so that $u\in H^{-N}$, with $Lu\in H^s_{loc}$ the following estimate
  \begin{align*}
    \norm{\Lambda u} \les \norm{\Lambda Lu} + \norm{u}_{-N}
  \end{align*}
\end{thm}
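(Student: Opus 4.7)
The plan is to establish the estimate via a coercivity-plus-commutator argument, built on a Poincar\'e inequality and commutator bounds (to be developed in Sections~\ref{poincare ineq} and~\ref{commutators}). The starting point is to apply the superlogarithmic estimates \eqref{superlog} to $\Lambda u$ itself: since $\log\jap{\xie}\le\log\jxi+\log\jeta$ and $g\ge0$, integrating the $L_1$-estimate in $y$ and the $g$-weighted $L_2$-estimate in $x$ (using that $g>0$ on the effective support of the relevant derivatives of $\phi$) yields
\begin{align*}
  \norm{\log\jap{\xie}\,\widehat{\Lambda u}}^2 \les \eps\,\mathrm{Re}(L\Lambda u,\Lambda u)+C_{\eps}\norm{\Lambda u}^2.
\end{align*}
A high-low frequency decomposition of $\Lambda u$ upgrades this to a genuine Poincar\'e-type bound
\begin{align*}
  \norm{\Lambda u}^2 \les \mathrm{Re}(L\Lambda u,\Lambda u)+\norm{u}_{-N}^2,
\end{align*}
because the low-frequency piece of $\Lambda u$ is controlled by $\norm{u}_{-N}$ (as $\lambda$ is a bounded symbol on compact frequency sets), while on high frequencies the logarithmic weight is bounded below by any prescribed constant.

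I would then split $\mathrm{Re}(L\Lambda u,\Lambda u)=\mathrm{Re}(\Lambda Lu,\Lambda u)+\mathrm{Re}([L,\Lambda]u,\Lambda u)$. Cauchy--Schwarz and Young's inequality handle the first summand, absorbing $\tfrac12\norm{\Lambda u}^2$ and leaving $C\norm{\Lambda Lu}^2$. The commutator $[L,\Lambda]=[L_1,\Lambda]+[gL_2,\Lambda]$ is the technical core. Since $\lambda=\jap{\xie}^{s-(N+s+3)\phi(x,y)}\SDE$, any $x$- or $y$-derivative of $\lambda$ brings down a factor $(N+s+3)\dxy\phi\cdot\log\jap{\xie}$ times $\lambda$. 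By \eqref{phi-derivatives} the non-trivial derivatives $\dx^\alpha\phi$, $|\alpha|\ge1$, lie in $\tO$, hence are supported in $|x|\ge\delta_2/2$ where $g$ is bounded below; this exactly compensates the infinite-order degeneracy of $g$ at the origin. The extra $\log\jap{\xie}$ weight is tamed by the superlogarithmic estimates: after standard pseudodifferential reorganization the principal commutator contributions reduce to constant multiples of $\norm{\log\jap{\xie}\,\widehat{\Lambda u}}^2$, already controlled via the first display above. Higher-order terms in the commutator expansion produce symbols of strictly smaller order and contribute only $O(\norm{u}_{-N}^2)$.

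The main obstacle is executing the pseudodifferential calculus for $\lambda$ rigorously: the presence of $\log\jap{\xie}$ in the exponent means $\lambda$ is not a Kohn--Nirenberg symbol of any fixed order, and the $\delta_2$- and $\delta_5$-dependence of the seminorms must be tracked with care. This is why the symbol classes of Section~\ref{symbol classes} are structured around the dichotomy between the \emph{away-from-degeneracy} class $\tO$, where $g$ is coercive, and the \emph{possibly-degenerate} class $\MO$. Quantitatively, $\delta_2$ must be chosen small enough that the coefficient $(N+s+3)$ in the exponent of $\lambda$ is compatible with the $\eps$ in \eqref{superlog} so that all commutator terms are genuinely absorbable, while all bounds must be uniform as $\delta_5\to 0$ per Remark~\ref{regularization}. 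Lemma~\ref{hypo_away_2} then guarantees that any remainder supported away from $(0,y_0)$ is automatically smooth, closing the argument.
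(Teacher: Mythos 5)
Your first display,
\begin{align*}
  \norm{\log\jap{\xi,\eta}\,\widehat{\Lambda u}}^2 \lesssim \eps\,\mathrm{Re}(L\Lambda u,\Lambda u)+C_{\eps}\norm{\Lambda u}^2,
\end{align*}
is precisely the superlogarithmic estimate for the full operator $L=L_1+g L_2$, and it is \emph{false} when $g$ vanishes to infinite order. This is stated explicitly in the paper right after Theorem~\ref{main}: ``For $g(x)$ sufficiently degenerate, the operator $L$ will violate Morimoto's superlogarithmic estimate, yet still be hypoelliptic.'' The reason is elementary: multiplying the $L_2$-superlog estimate by $g(x)$ and integrating in $x$ only yields control of $\norm{\sqrt{g}\log\jap{\eta}\,\Lambda u}^2$, not of $\norm{\log\jap{\eta}\,\Lambda u}^2$. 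You cannot drop the $\sqrt{g}$ weight, and your parenthetical justification -- that $g>0$ on the support of the derivatives of $\phi$ -- has no bearing here, since $\Lambda u$ itself (as opposed to a commutator term) is supported throughout the degenerate region $|x|\le\delta_2$. Consequently the ``Poincar\'e-type bound'' $\norm{\Lambda u}^2\lesssim\mathrm{Re}(L\Lambda u,\Lambda u)+\norm{u}_{-N}^2$ that you try to extract from it, and the subsequent reduction of commutator terms to ``constant multiples of $\norm{\log\jap{\xi,\eta}\widehat{\Lambda u}}^2$,'' do not follow.

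The paper's actual mechanism is different in an essential way, and it is worth internalizing. Proposition~\ref{Poin+Lambda} is a genuine Poincar\'e inequality in the $x$-variable \emph{alone}: because the effective support of $\Lambda u$ in $x$ has diameter $\sim\delta_2$, one gets $\norm{\Lambda u}^2 \le \delta_1\norm{\log\jap{\xi}\,\widehat{\Lambda u}(\xi,y)}^2+C_{\delta_1}\norm{u}_{-N}$ with a constant $\delta_1$ that can be made arbitrarily small by shrinking $\delta_2$, and with the logarithm only in $\xi$. Only the $\xi$-log appears, and it is controlled by the $L_1$-superlog estimate plus the positivity of $gL_2$ (Lemma~\ref{pos}); the unavailable $\eta$-log is never needed at this stage. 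The $\eta$-logs then do appear in the commutator analysis (Proposition~\ref{prop:commutator}), but there they always come either with the weight $\sqrt{g}$ (Lemma~\ref{lem:log:g}) or multiplied by a cutoff $\tilde\phi\in\tO$ supported where $g$ is coercive (Lemma~\ref{lem:nondeg+}), so that the weighted/localized superlog estimates suffice. The small absorption constant is the Poincar\'e parameter $\delta_1$ coming from the small $\delta_2$-support, \emph{not} the $\eps$ from the superlog estimate, and this distinction is what keeps the chain of absorptions from being circular. Your plan conflates these two sources of smallness, and repairing it would effectively require you to rediscover the $x$-only Poincar\'e lemma and the $\sqrt{g}$-weighted and $\tO$-localized log estimates.
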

\begin{prop}\label{coro-main}
  Theorem \ref{main-Lambda} implies Theorem \ref{main}
\end{prop}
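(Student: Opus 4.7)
By Lemma~\ref{hypo_away_1}, local $H^s$-hypoellipticity reduces to the case of compactly supported distributions, and by Lemma~\ref{hypo_away_2}, hypoellipticity is already known at every point with $x \neq 0$. So I fix a point $(0,y_0)$ in the degeneracy set, a distribution $u \in \mathcal{D}'$ with $Lu$ smooth in some neighborhood $V$ of $(0,y_0)$, and $s \in \R$, and aim to show $u \in H^s$ near $(0,y_0)$. Let $\psi \in C^\infty_0(V)$ with $\psi \equiv 1$ on a smaller neighborhood of $(0,y_0)$, set $v = \psi u \in \E'$, and pick $N$ with $v \in H^{-N}$.

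With $R$ large enough that $\supp v \subset B_R(0) \times B_R(y_0)$, choose $\delta_2$ small as in Theorem~\ref{main-Lambda}, and arrange $\psi$ so that $\supp(\nabla \psi) \subset \{\phi = 1\}$, where $\phi$ is defined by \eqref{phi}. Form $\Lambda = \Lambda_{\delta_5}$ from \eqref{lambda} with exponent $s$. Theorem~\ref{main-Lambda} then yields
\begin{align*}
\norm{\Lambda v}_{L^2} \les \norm{\Lambda Lv}_{L^2} + \norm{v}_{-N},
\end{align*}
with constants independent of $\delta_5$. I decompose $Lv = \psi Lu + [L,\psi] u$: the first piece is smooth and compactly supported, and $\norm{\Lambda(\psi Lu)}_{L^2}$ is uniformly bounded since $|\lambda| \le \jap{\xi,\eta}^s |S_{\delta_5}|$ and $S_{\delta_5}$ has uniform $S^0$ seminorms. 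For the commutator $[L,\psi]u \in H^{-N-1}$, which is supported in $\{\phi = 1\}$, I pick $\chi \in C^\infty_0(\{\phi = 1\})$ equal to $1$ on $\supp[L,\psi] u$ so that $[L,\psi]u = \chi [L,\psi]u$; the composition $\Lambda \circ M_\chi$ is then a PDO whose symbol $\chi\lambda$ has uniform order $-(N+3)$ in $\delta_5$ (since $\phi = 1$ on $\supp \chi$), bounding $\norm{\Lambda [L,\psi] u}_{L^2}$ uniformly. Thus $\|\Lambda_{\delta_5} v\|_{L^2}$ is bounded uniformly in $\delta_5>0$.

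By weak $L^2$-compactness, a subsequence $\Lambda_{\delta_5^{(k)}} v \rightharpoonup f \in L^2$. Testing against $\varphi \in \Schw$ and using $S_{\delta_5} \to 1$ pointwise with dominated convergence, one identifies $f = \Lambda_0 v$ distributionally, where $\Lambda_0$ has symbol $\lambda_0 = \jap{\xi,\eta}^{s - (N+s+3)\phi(x,y)}$; hence $\Lambda_0 v \in L^2$. On a neighborhood $U$ of $(0,y_0)$ where $\phi \equiv 0$, $\lambda_0$ equals the classical elliptic symbol $\jap{\xi,\eta}^s$; composing with $\chi' \in C^\infty_0(U)$ and a parametrix of $\jap{D}^s$ yields $\chi' v \in H^s$, so $v \in H^s$ near $(0,y_0)$. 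Since $v = u$ on a neighborhood of $(0,y_0)$, $u \in H^s$ there; iterating over $s \in \R$ gives $u \in C^\infty$ near $(0,y_0)$.

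The main technical obstacle I anticipate is the uniform-in-$\delta_5$ bound on $\norm{\Lambda Lv}_{L^2}$, especially for the commutator contribution: the claim that $\Lambda \circ M_\chi$ has uniform order $-(N+3)$ rests on symbol calculus in the nonstandard class to which $\lambda$ belongs (spatially variable order together with logarithmic factors from $\phi\log\jap{\xi,\eta}$), developed in Section~\ref{symbol classes}. A secondary subtlety is the identification of the weak limit as $\Lambda_0 v$, which needs $\Lambda_{\delta_5}^* \varphi \to \Lambda_0^* \varphi$ in a sufficiently strong topology on Schwartz functions so that the pairing passes to the limit.
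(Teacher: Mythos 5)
Your proposal takes a genuinely different route from the paper. Both rest on Theorem~\ref{main-Lambda} plus the three auxiliary Lemmas~\ref{hypo_away_1}, \ref{lem:lambda-hs}, \ref{lem:hs-lambda}, but you have reorganized the order in which the operator $\Lambda$ is ``removed'' and the way the regularization $\delta_5\to 0$ is handled, and the net effect is a less clean limiting step.

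The paper works directly with $u\in\E'$ (justified up front by Lemma~\ref{hypo_away_1}) and then chains $\norm{\SDE\phi_0 u}_{H^s}\les\norm{\Lambda u}\les\norm{\Lambda Lu}\les\norm{\psi Lu}_{H^s}+\norm{u}_{-N}$, where the first inequality is Lemma~\ref{lem:hs-lambda} and the third is Lemma~\ref{lem:lambda-hs}, both uniform in $\delta_5$. Crucially, the paper first removes the exotic operator $\Lambda$ from the left-hand side, so the $\delta_5\to 0$ limit is only taken on $\SDE\phi_0 u$: weak compactness in $H^s$ and strong convergence $\SDE\phi_0 u\to\phi_0 u$ in $H^{-N}$ then identify the limit by uniqueness of distributional limits, with no further symbol calculus needed. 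You instead form $v=\psi u$, decompose $Lv=\psi Lu+[L,\psi]u$ by hand (which reproves a version of Lemma~\ref{lem:lambda-hs}), take the $\delta_5\to 0$ limit on the full $\Lambda_{\delta_5}v$, land on the spatially variable-order operator $\Lambda_0$, and then recover $H^s$ regularity from $\Lambda_0 v\in L^2$ by microlocalizing to the region $\{\phi=0\}$ (which reproves a version of Lemma~\ref{lem:hs-lambda}, but for the limiting symbol $\lambda_0$ rather than for $\J^s\SDE$).

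Both routes are sound, and you correctly identify the two places that need care. But note that both of your concerns are artifacts of the chosen ordering: the weak-limit identification $\Lambda_{\delta_5}^*\varphi\to\Lambda_0^*\varphi$ requires redoing (in the limit) the same calculus that the paper already packaged into the uniform-in-$\delta_5$ Lemmas~\ref{lem:lambda-hs}--\ref{lem:hs-lambda}, and your final extraction $\chi'v\in H^s$ must handle the derivatives of $\lambda_0$ (which carry $\log\J$ factors outside of $\{\phi=0\}$), requiring nested cutoffs $\chi'\Subset\chi''\Subset\{\phi=0\}$ so that all error terms in $\lambda_0\#\chi''-\jap{\xi,\eta}^s\#\chi''$ vanish. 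The paper's ordering confines the $\Lambda$-calculus to the uniform estimates and reserves the limit for the benign family $\SDE\phi_0 u$, sidestepping both subtleties at once. Your plan can be completed, but unpacking those two glossed-over steps essentially regenerates the two lemmas the paper proves separately; there is no gain in generality or brevity, only a reshuffling of where the work sits.
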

We reduce the proof of the Proposition to the following two Lemmas that allow us to ``remove'' the operator $\Lambda$ once the essential work of Theorem \ref{main-Lambda} is achieved.
\begin{lem}\label{lem:lambda-hs}
Let $\psi\in C^\infty_0$ be given, such that $\psi\equiv 1$ on a neighborhood of $(0,y_0)$. Let $\delta_2$ be small enough, so that
\begin{align}\label{B2delta-subset}
  B_{2\delta_2}(0)\times B_{2\delta_2}(y_0)\Subset \{(x,y):\psi(x,y) =1\}
\end{align}Further, let $u\in \E'$ with $\supp u\subset B_R(0)\times B_R(y_0)$ and $N$ be such that $u\in H^{-N}$. Then there exists a constant $C=C(\psi,s,R,N,\delta_2)>0$, so that
  \begin{align}
  ||\Lambda Lu ||&\le C(\psi, s,R,N,\delta_2) \left(|| \psi L u||_{H^{s}}+||u||_{H^{-N}}\right)\label{lambda_to_hs}
 \end{align}
\end{lem}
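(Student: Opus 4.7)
The approach is to decompose $Lu = \psi Lu + (1-\psi) Lu$ and exploit the position-dependent order of the symbol $\lambda = \jap{\xi,\eta}^{s-(N+s+3)\phi(x,y)} \SDE$. Since $\phi \ge 0$, $\lambda$ has order at most $s$ everywhere, and drops to order $-(N+3)$ wherever $\phi = 1$. The first piece $\psi Lu$ lies in $H^s$ by hypothesis and pairs naturally with the global order-$s$ bound, while the second piece $(1-\psi) Lu$ is, thanks to the choice of $\delta_2$ in \eqref{B2delta-subset}, supported in the region where $\phi \equiv 1$, so it experiences $\Lambda$ as a smoothing operator.

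For $\Lambda(\psi Lu)$ I would first verify that $\lambda$ is a symbol of order $s$ with seminorms controlled in terms of $\psi, s, R, N, \delta_2$. The only subtle derivatives are those falling on the exponential $\exp(-(N+s+3)\phi \log\jap{\xi,\eta})$: $(x,y)$-derivatives produce powers of $\log \jap{\xi,\eta}$, which I would absorb using $(\log\jap{\xi,\eta})^k \le C_{k,\eps}\jap{\xi,\eta}^{\eps}$ for arbitrarily small $\eps>0$, and the derivatives of $\phi$ are controlled by \eqref{phi-derivatives} with constants depending on $\delta_2$ and $R$. The Calder\'on--Vaillancourt-type $L^2$-boundedness statements in the Appendix then yield $\norm{\Lambda(\psi Lu)}\les \norm{\psi Lu}_{H^s}$, with a constant of the allowed shape and uniform in $\delta_5$.

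For $\Lambda((1-\psi) Lu)$ I would choose a cutoff $\chi\in C^\infty_0$ with $\chi\equiv 1$ on a neighborhood of $\supp(1-\psi)\cap \supp u$ and $\supp\chi \subset \{\phi=1\}$; such a $\chi$ exists because \eqref{B2delta-subset} together with the explicit support of the inner bump in the definition of $\phi$ forces $\{\phi<1\}\cap\supp u$ to lie well inside $\{\psi=1\}$ once $\delta_2$ is small enough. Writing $(1-\psi)Lu = \chi(1-\psi)Lu$, the standard asymptotic expansion for the composition $\Lambda \chi$ gives leading symbol $\chi(x,y)\jap{\xi,\eta}^{-(N+3)}\SDE$ with remainders of strictly lower order, so $\Lambda \chi$ is an operator of order $-(N+3)$. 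Since by Remark \ref{bounded-coeff} we may assume $L$ has globally bounded coefficients, $(1-\psi)Lu \in H^{-(N+2)}$ with norm $\les \norm{u}_{H^{-N}}$, and an operator of order $-(N+3)$ maps $H^{-(N+2)}\subset H^{-(N+3)}$ into $L^2$. Summing the two contributions gives the claim.

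The main obstacle is the symbol calculus for $\lambda$: its order is an $(x,y)$-dependent function, and differentiating the exponential factor repeatedly spawns logarithmic powers that must be absorbed without costing more orders than we can afford. Keeping track of how the resulting seminorm constants depend on $\delta_2$---so that the lemma's constant $C(\psi,s,R,N,\delta_2)$ is explicit and independent of the regularization parameter $\delta_5$---is the technically delicate ingredient, and is precisely what the symbol-class machinery introduced in Section \ref{symbol classes} is designed to handle.
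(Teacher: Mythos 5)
Your proof is correct and follows essentially the same strategy as the paper's: split $Lu$ via $\psi$ and $1-\psi$, bound the first piece using that $\lambda\in S^{s}_{1,\eps}$ globally (with seminorms uniform in $\delta_5$, a consequence of $\phi\ge 0$ and Lemma~\ref{s-delta}), and bound the second piece by observing that $\phi\equiv 1$ on its support so that $\Lambda$ there has order $-(N+3)$. The paper reaches the same conclusion via a three-term decomposition $\phi_3\Lambda\psi Lu + \phi_3\Lambda(1-\psi)Lu + (1-\phi_3)\Lambda Lu$, inserting an intermediate cutoff $\phi_3$ with $B_{2\delta_2}(0)\times B_{2\delta_2}(y_0)\Subset\phi_3\Subset\psi$ and disposing of the middle term by pseudo-locality; your two-term version dispenses with $\phi_3$ at no real cost, since the key geometric ingredient in both arguments is the same, namely that $\{\phi<1\}\cap\supp u$ sits inside $\{\psi=1\}$, which is what makes the paper's Term~III smoothing and your $\chi$ exist. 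Both proofs thus rest on the two facts you isolate at the end: the position-dependent order of $\lambda$, and uniformity of its seminorms in $\delta_5$.
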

\begin{lem}\label{lem:hs-lambda} Let $\delta_2>0$ be given and consider $\phi_0\in C^\infty_0(\R^n\times \R^m)$, so that
$$\supp \phi_0 \subset B_{\frac{1}{2}\delta_2}(0)\times B_{\frac{1}{2}\delta_2}(y_0)$$
Let $u$ and $N$ be as in Lemma \ref{lem:lambda-hs}. Then there exists a constant $C$ independent of the parameter $\delta_5$, more precisely $C=C(\phi_0,s,R,N,\delta_2)>0$, so that
  \begin{align}
      ||S_{\delta_5} \phi_{0} u||_{H^{s}}&\le C(\phi_0,s,R,N,\delta_2) ||\Lambda u||+||u||_{H^{-N}}\label{hs_to_lambda}
  \end{align}
\end{lem}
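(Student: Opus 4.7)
The plan exploits that $\phi$ from \eqref{phi} vanishes identically on $\supp \phi_0$: for $\delta_2$ small enough that $B_{\delta_2/2}(0)\times B_{\delta_2/2}(y_0)$ lies inside the set $\{\chi_x(x/\delta_2)\chi_y(R(y-y_0))=1\}$, the two pieces of $\phi$ cancel on $\supp \phi_0$. Hence on this region the symbol $\lambda$ reduces to $\jap{\xi,\eta}^s S_{\delta_5}$. A direct computation shows that for $\mu(x,y,\xi,\eta):=\exp(-(N+s+3)\phi\log\jap{\xi,\eta})$, every derivative $\partial_\xi^\beta\mu$ with $|\beta|\ge 1$ carries an explicit factor of $\phi$ (via repeated application of the chain rule to the exponential), so both $\mu-1$ and $\partial_\xi^\beta\mu$ ($|\beta|\ge 1$) vanish on $\{\phi=0\}$.

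Using these vanishings, the full asymptotic expansion of the symbol of $(\Lambda-\jap{D}^s S_{\delta_5})\phi_0$,
\[
\sum_\alpha \tfrac{(-i)^{|\alpha|}}{\alpha!}\,\partial_\xi^\alpha\bigl(\jap{\xi,\eta}^s S_{\delta_5}(\mu-1)\bigr)\,\partial_x^\alpha\phi_0,
\]
is zero on $\supp\partial_x^\alpha\phi_0\subset\supp\phi_0\subset\{\phi=0\}$ by Leibniz, so $(\Lambda-\jap{D}^s S_{\delta_5})\phi_0$ is smoothing with seminorms uniform in $\delta_5$ (as $S_{\delta_5}\in S^0$ uniformly in $\delta_5$). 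The same argument applied to the commutator gives $[\Lambda,\phi_0]=[\jap{D}^s S_{\delta_5},\phi_0]+R_1$ with $R_1$ smoothing uniformly. Combining these,
\[
\jap{D}^s S_{\delta_5}\phi_0\,u \;=\; \phi_0\Lambda u + [\jap{D}^s S_{\delta_5},\phi_0]u + R u, \qquad \norm{Ru}\le C\norm{u}_{H^{-N}},
\]
and the estimate $\norm{\phi_0\Lambda u}\le\norm{\phi_0}_\infty\norm{\Lambda u}$ is immediate.

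The main obstacle is the remaining commutator $[\jap{D}^s S_{\delta_5},\phi_0]u$, which is of order $s-1$ uniformly in $\delta_5$ and hence does not directly absorb into $\norm{u}_{H^{-N}}$ when $s-1>-N$. I would address this by a finite cascade: introduce a nested family of cutoffs $\phi_0=\phi_0^{(0)},\phi_0^{(1)},\ldots$, each supported in $\{\phi=0\}$ and satisfying $\phi_0^{(k+1)}\equiv 1$ on $\supp\phi_0^{(k)}$, then iterate the identity at decreasing Sobolev orders $s,s-1,s-2,\ldots$. At each step the residual commutator loses one order, while the additional $\phi_0^{(k)}\Lambda u$ contributions remain bounded by $\norm{\Lambda u}$ and the new smoothing errors by $\norm{u}_{H^{-N}}$. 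After at most $\lceil s+N\rceil+1$ steps the residual commutator has order $\le -N$ and is controlled directly by $\norm{u}_{H^{-N}}$. Uniformity in $\delta_5$ is preserved throughout because all intermediate symbols inherit $S^0$-uniform bounds from $S_{\delta_5}$, and the number of iterations depends only on $s$ and $N$, matching the stated dependence $C=C(\phi_0,s,R,N,\delta_2)$.
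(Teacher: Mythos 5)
Your proposal is correct in substance but organized differently from the paper. Both arguments rest on the same two facts: on the region containing $\supp\phi_0$ the localizer $\phi$ vanishes, so the symbol $\lambda$ coincides there with $\jap{\xi,\eta}^s s_{\delta_5}$ (hence $\psi\,(\Lambda-\jap{D}^s\SDE)$ is exactly zero for any cutoff $\psi$ supported in that region, and $(\Lambda-\jap{D}^s\SDE)\comp\phi_0$ is smoothing because every term of its expansion carries a factor of $\mu-1$ or $\partial_{\xi,\eta}^\beta\mu$, which vanish where $\phi=0$); and all bounds are uniform in $\delta_5$ since $\SDE$ has $\delta_5$-uniform $S^0$ seminorms. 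Where you differ: to dispose of the residual commutator $[\jap{D}^s\SDE,\phi_0]$ you run an order-by-order cascade with a chain of nested cutoffs, whereas the paper does the entire commutation in one stroke, writing $\J^s\SDE\comp\phi_0=\phi_1 a_0\,\J^s\SDE \bmod S^{-N}$ with $a_0=\sum_{|\alpha|<s+N}\partial^\alpha_{x,y}\phi_0\,\partial^\alpha_{\xi,\eta}[\J^s\SDE]/[\J^s\SDE]\in S^0$ uniformly in $\delta_5$ (exactly Lemma \ref{s-delta}(3)), then inserting one further cutoff $\phi_2$ by pseudolocality and using $\phi_2\,\J^s\SDE u=\phi_2\Lambda u$ once. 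Your cascade does close, but be aware of the one under-specified step: for $k\ge 1$ the weight $\jap{D}^{s-k}\SDE$ no longer agrees with $\lambda$ anywhere, so producing the ``$\phi_0^{(k)}\Lambda u$ contribution'' at each stage forces you to factor the full-order weight $\jap{D}^s\SDE$ back out of the order-$(s-k-1)$ residue --- i.e.\ to use precisely the quotient bounds of Lemma \ref{s-delta}(3), together with an extra cutoff insertion and pseudolocality because $\jap{D}^{-k}$ is nonlocal. Once you observe that those quotients are uniformly of order $-|\alpha|$, the whole cascade collapses into the paper's single factorization, which is the shorter way to write the same proof.
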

\begin{proof}[Proof of Proposition \ref{coro-main}]
Let $u\in \E'$, such that $Lu \in H^s_{loc}$. Thus there exists $\psi \in C^\infty_0$ with $\psi\equiv 1$ on the relevant neighborhood, so that $\psi Lu \in H^s$. From $u\in \E'$, there exist an $N\in \R$, so that $u\in H^{-N}$.\\

 Let $\delta_2>0$ be small enough, so that Theorem \ref{main-Lambda} holds. Choose a cutoff $\phi_0\in C^\infty_0$ as in Lemma \ref{lem:hs-lambda}\\

Then
\begin{align*}
  \norm{\SDE \phi_0 u}_{H^s}\les \norm{\Lambda u} + \norm{u}_{-N} \text{ by Lemma \ref{lem:hs-lambda}}\\
  \les \norm{\Lambda Lu} + \norm{u}_{-N} \text{ by Theorem \ref{main-Lambda}} \\
  \les \norm{\psi Lu}_{H^s} + \norm{u}_{-N} \text{ by Lemma \ref{lem:lambda-hs} }
\end{align*}
where none of the constants depend on $\delta_5$.\\

We now take $\delta_5\to 0$ to conclude $\phi_0 u\in H^s$. We proceed as follows.\\
First, take a sequence $\delta_5\to 0$. By Banach-Alaouglu, since $\SDE \phi_0 u$ is a bounded sequence in the Hilbert space $H^{s}$, it converges weakly in $H^s$ up to a subsequence. Let $v=\mathop{w}\mbox{-}\lim_{\delta_5 \to 0} \SDE \phi_0 u$ along this subsequence. Note that $v\in H^s$, and since weak limits in $H^s$ are also distributional limits we have that $\lim_{\delta_5 \to 0} \SDE \phi_0 u=v$ in the sense of distributions.\\
We now turn to strong limits. Recall that we have $u\in H^{-N}$, and thus $\lim_{\delta_5\to 0}\SDE \phi_0 u = \phi_0 u$ in $H^{-N}$ as $\SDE\in S^0$ uniformly in $\delta_5$. This in turn implies $\lim_{\delta_5 \to 0} \SDE \phi_0 u = \phi_0 u$ in the sense of distributions.
Since distributional limits are unique, we conclude that $\phi_0 u = v$. Hence $\phi_0 u\in H^s$ and $u\in H^s_{loc}$.
\end{proof}
\begin{proof}[Proof of Lemma \ref{lem:lambda-hs}]
Let $\phi_3$ be a cut-off function, so that
\begin{align*}
  B_{2\delta_2}(0)\times B_{2\delta_2}(y_0) \Subset \phi_3 \Subset \psi
\end{align*}
I.e. $\phi_3\equiv 1$ on $  B_{2\delta_2}(0)\times B_{2\delta_2}(y_0)$, while the latter members of the family nest, i.e. $\psi\equiv 1$ on support of $\phi_3$.

 We now begin the estimates.
 \begin{align*}
  \Lambda Lu & =\phi_3 \Lambda Lu + (1-\phi_3)\Lambda Lu\\
 & =\phi_3\Lambda\psi Lu + \phi_3 \Lambda (1-\psi) Lu + (1-\phi_3)\Lambda Lu
 \end{align*}
 Using triangle inequality, we separate the terms.
 \begin{align*}
   \norm{\Lambda Lu} & \le \norm{\phi_3 \Lambda \psi Lu} + \norm{\phi_3 \Lambda (1-\psi) Lu} + \norm{(1-\phi_3)\Lambda Lu}\\
   &  = I + II +III
 \end{align*}
Since $\phi_3\Lambda\in S^s_{1,\eps}$, by the boundedness of the {\PDO} \cite{Kg81}
\begin{align*}
 I \leq C(\delta_2,s,N) ||\psi L u||_{H^{s}}
\end{align*}
From the supports of $\phi_3$ and $\psi$, $\phi_3 \Lambda (1-\psi) \in S^{-\infty}$, therefore
\begin{align*}
  II  \leq C(\delta_2,s,N)||u||_{H^{-N}}
\end{align*}
Finally, for the third term, observe that on $\supp (1-\phi_3)\cap \supp u$, $\phi \equiv 1$. Hence in this region $\Lambda$ is smoothing of order $N+3$:
\begin{equation*}
 ||(1-\phi_3)\Lambda Lu|| \leq C(\delta_2,s,N)||u||_{H^{-N}}
\end{equation*}
Combining the estimates $I-III$ we complete the proof.
\end{proof}
\begin{proof}[Proof of Lemma \ref{lem:hs-lambda}]
Define the following sequence of cutoffs, that nest inside the ``degenerate'' region, where $\phi$ from \eqref{phi} is designed to vanish. I.e. let $\phi_0\Subset \phi_1\Subset \phi_2 \Subset \chi_{B_{\delta_2}(0)\times B_{\delta_2}(y_0)}$.\\

With cutoffs fixed to be used later, we want to commute the localization $\phi_0$ with the derivative operators. By the calculus of {\PDO} we can express:
\begin{align}\label{cutoff-to-lambda}
  \J^s \SDE\comp \phi_0 = \phi_1 a_{0} \J^s\SDE \mod S^{-N}
\end{align}
where we defined the symbol $a_{0}$ as
\begin{align*}
  a_{0}\xyx=\sum_{|\alpha|<s+N} \dxy^\alpha \phi_0(x,y) \frac{\dxie^\alpha [\J^s \SDE]}{\J^s \SDE}
\end{align*}
As $\phi_1\equiv 1$ on the support of $\phi_0$, it is harmless to add it in \eqref{cutoff-to-lambda}.\\

Note, that the bounds on the semi-norms of $a_{0}$ depend on $(\delta_2,s,N)$ through the support of $\phi_0$. However, $a_{0}$ seminorms can be bounded independely of $\delta_5$, as all the $S^0$ semi-norms of $\SDE$ are bounded uniformly in $\delta_5$.\\

Next, recall that $\phi_2\equiv 1$ on the support of $\phi_1$. Therefore, $\phi_1 a_{0}\comp(1-\phi_2) \in S^{-\infty}$. We thus have
\begin{align*}
  \J^s \SDE\comp\phi_0 = \phi_1 a_{0} \phi_2 \J^s\SDE \mod S^{-N}
\end{align*}
We are now ready to complete the proof of \eqref{hs_to_lambda}. From the computations above
\begin{align*}
  \norm{\SDE \phi_0 u}_{H^s} \le \norm{\phi_1 a_{0} \phi_2 \J^s\SDE u } + \norm{R_{-N}u}\\ \le C(s,N,\delta_2)\norm{\phi_2 \J^s \SDE u} + C\norm{u}_{H^{-N}}
\end{align*}
Where we applied the $L^2$ boundedness of $a_0\in S^0$ and the $H^{-N}\to L^2$ boundedness of the, previously unnamed, remainder $R_{-N}\in S^{-N}$.\\

Finally, as on $\supp \phi_2$, $\J^s \SDE = \Lambda$, and $\phi_2$ is bounded, the proof is complete.
\end{proof}
%

\subsection{Proof of Theorem \ref{main-Lambda}}\label{main proof}
By Proposition \ref{coro-main}, the proof of the hypoellipticity of $L$ reduces to an estimate of the form
\begin{align}\label{main:est}
    \norm{\Lambda u} \les \norm{\Lambda Lu} + \norm{u}_{-N}
  \end{align}
In this section we reduce the argument to a series of the Lemmas and Propositions, whose proof is deferred to the later sections. The argument is inspired by \cite{Fedii71} and \cite{Kohn98}.
\begin{enumerate}
\item First, using a Poincar\'{e} type inequality we can bound the norm of a function with small support by the norm of its derivative. More precisely,
\begin{prop}\label{Poin+Lambda}
   Given $\delta_2>0$, set $\delta_1=\frac{2(2\pi)^{n}}{ \log^2 {2\pi{\delta_2}}}$. Then for $\Lambda$ defined in \eqref{lambda} we get:

   \comment{Make a remark how $\Lambda$ depends on $\delta_2$}
	\begin{align}\label{poinc_aux}
    \norm{\Lambda u}^2 \le \delta_1\norm{\log\jap{\xi} \widehat{\Lambda u}(\xi,y)}^2 + C_{\delta_1} \norm{u}_{H^{-N}}
  \end{align}
	where we take the Fourier transform with respect to the variable $x$ only.
\end{prop}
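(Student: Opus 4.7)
The proposition is a logarithmic Poincar\'e inequality for $\Lambda u$. The strategy is to exploit the fact that the symbol $\lambda$ in \eqref{lambda} is smoothing of order $-(N+3)$ wherever $\phi\equiv 1$, while $\phi$ can vanish only in a small neighborhood of $(0,y_{0})$ of $x$-diameter $\sim\delta_{2}$. Therefore, modulo $H^{-N}$-errors, $\Lambda u$ is essentially localized in an $x$-ball of radius $O(\delta_{2})$, a setting in which a classical log-Poincar\'e estimate applies with the sharp constant $\delta_{1}$.

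I would proceed in three steps. First, pick $\chi(x)\in C^{\infty}_{0}(\R^{n})$ with $\chi\equiv 1$ on $\{|x|\le 2\delta_{2}\}$ and $\supp\chi\subset\{|x|\le 4\delta_{2}\}$, and split $\Lambda u=\chi\Lambda u+(1-\chi)\Lambda u$. On $\supp u\cap\supp(1-\chi)$ one has $|x|\in[2\delta_{2},R]$ and $|y-y_{0}|\le R$, hence (provided the flat top of $\chi_{y}$ covers the unit ball in \eqref{phi}) $\chi_{x}(x/\delta_{2})=0$ and $\chi_{x}(x/R)\chi_{y}((y-y_{0})/R)=1$, giving $\phi(x,y)=1$. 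Thus $(1-\chi)\lambda$ reduces to $\jap{\xi,\eta}^{-(N+3)}\SDE$ on the relevant region, so the $\psi$DO calculus (with seminorms uniform in $\delta_{5}$ by Remark \ref{regularization}) yields
\begin{equation*}
\norm{(1-\chi)\Lambda u}_{L^{2}}+\norm{\log\jap{\xi}\,\widehat{(1-\chi)\Lambda u}(\xi,y)}_{L^{2}_{\xi,y}}\le C(\delta_{2},R,N)\norm{u}_{H^{-N}}.
\end{equation*}
Second, for each fixed $y$ the function $v_{y}(x):=\chi(x)\Lambda u(x,y)$ is supported in $\{|x|\le 4\delta_{2}\}$, so the proof reduces to the classical log-Poincar\'e bound: for any $v\in L^{2}(\R^{n})$ with $\supp v\subset B_{4\delta_{2}}$,
\begin{equation*}
\norm{v}_{L^{2}}^{2}\le \delta_{1}\int\log^{2}\jap{\xi}|\hat v(\xi)|^{2}\,d\xi.
\end{equation*}
This is a standard high/low frequency split of Plancherel at a threshold $R_{0}\sim 1/(2\pi\delta_{2})$: for $|\xi|<R_{0}$ the support bound $|\hat v(\xi)|\lesssim \delta_{2}^{n/2}\norm{v}_{L^{2}}$ yields a contribution $\lesssim R_{0}^{n}\delta_{2}^{n}\norm{v}^{2}$ which is absorbed into the LHS, while for $|\xi|\ge R_{0}$ the trivial estimate $1\le \log^{2}\jap{\xi}/\log^{2}\jap{R_{0}}$ produces $\log^{-2}\jap{R_{0}}\norm{\log\jap{\xi}\hat v}^{2}$. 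The precise choice of $R_{0}$ realizes the constant $\delta_{1}=2(2\pi)^{n}/\log^{2}(2\pi\delta_{2})$.

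Integrating the second step in $y$, combining with $\widehat{\chi\Lambda u}=\widehat{\Lambda u}-\widehat{(1-\chi)\Lambda u}$ and using the first step to control the tail, one obtains
\begin{equation*}
\norm{\Lambda u}^{2}\le \delta_{1}\norm{\log\jap{\xi}\,\widehat{\Lambda u}(\xi,y)}^{2}+C_{\delta_{1}}\norm{u}_{H^{-N}}^{2},
\end{equation*}
after harmlessly absorbing lower-order losses into $C_{\delta_{1}}$ (and retaining the constant $\delta_{1}$ by tightening $R_{0}$ if needed). The main technical hurdle is the first step: verifying that $\phi\equiv 1$ on $\supp u\cap\supp(1-\chi)$, which forces careful coordination of the radii of $\chi$, $\chi_{x}(\cdot/\delta_{2})$, $\chi_{x}(\cdot/R)$ and the flat top of $\chi_{y}$ from Definition \ref{cut-def}, while keeping all $\psi$DO seminorms uniform in the regularization parameter $\delta_{5}$.
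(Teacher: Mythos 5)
Your proposal follows the same strategy the paper intends (and only sketches with a \texttt{todo Finish} marker): split $\Lambda u$ by a spatial cutoff so that the piece localized where $\phi\equiv 1$, and hence where $\lambda$ is smoothing of order $-(N+3)$, is absorbed into $C_{\delta_1}\norm{u}_{H^{-N}}$, while the remaining piece has $x$--support $O(\delta_2)$ and one applies the log--Poincar\'e bound of Lemma~\ref{Poin} fiberwise in $y$. Your version is in fact more explicit than the paper's one-line proof about the pseudolocality step controlling $(1-\chi)\Lambda u$ and about the coordination of the cutoff radii (flat top of $\chi_y$, choice of $R$, $\delta_2$), which are exactly the details the paper leaves implicit.
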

We defer the proof of this result to section \ref{poincare ineq}.
 \item Next, to make the operator appear, we use the superlog estimate for $L_{1}$ with $\eps =1$:
	\begin{align}\label{superlog-L1}
	\norm{\log\jap{\xi} \widehat{\Lambda u}(\xi,y)}^2\leq Re(L_1\Lambda u, \Lambda u) +  C \norm{\Lambda u}^2\leq Re(L\Lambda u, \Lambda u) +  C \norm{\Lambda u}^2
	\end{align}
	where the last inequality is due to positivity of the operators (Lemma \ref{pos} in the appendix applied to the operator $g(x)L_2(y,\partial_y)$). Note that we need to take real parts of expressions like $(L\Lambda u,\Lambda u)$ since even though $u$ is a real function, $\Lambda u$ does not have to be.
	\item Combining estimates \eqref{superlog-L1} and \eqref{poinc_aux} gives
\begin{align}\label{Lambda-to-comm}
  \norm{\Lambda u}^2 \le \delta_1\left[ Re(L\Lambda u, \Lambda u)+ C \norm{\Lambda u}^2\right]+C_{\delta_1} \norm{u}_{H^{-N}}
\end{align}
To prove the main estimate \eqref{main:est}, we want to commute $L$ and $\Lambda$.  I.e. the main task of the argument is the analysis of the commutator term
 \begin{align}
(L\Lambda u, \Lambda u) &= (\Lambda L u, \Lambda u) + ([L,\Lambda] u, \Lambda u)\label{basic_comm}\\
[L,\Lambda]&=[L_1,\Lambda]+g[L_2,\Lambda]+[g,\Lambda]L_2\equiv Q_1+gQ_2+Q_3\nonumber
\end{align}
The commutators can be estimated at a price of extra logs, which appear in the non-degenerate region away from $x=0$ or with a weight $g(x)$:
\begin{prop}[Commutator estimate]\label{prop:commutator} For any $\delta_2>0$, there exists a function $\tld \phi \in \tO_{\delta_2}$, so that the following estimates hold.
  \begin{align*}
Re(Q_1 u, \Lambda u)&\leq \frac{1}{2} Re(L\Lambda u, \Lambda u) + C_{\delta_2} \norm{\log \J \tld \phi \Lambda u}^2 + C\norm{\Lambda u}^2 + C_{\delta_2} \norm{u}_{H^{-N}}\\
Re(gQ_2 u, \Lambda u)+Re(Q_3 u, \Lambda u)&\leq C \norm{\sqrt{g} \log \J \Lambda u}^2  +C \norm{\Lambda u}^2+ C_{\delta_2} \norm{u}_{H^{-N}}
\end{align*}
\end{prop}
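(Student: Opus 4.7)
The plan is to expand the three pieces $Q_1$, $gQ_2$ and $Q_3$ via pseudodifferential calculus and exploit two structural features of $\lambda(x,y,\xi,\eta) = \jap{\xi,\eta}^{s-(N+s+3)\phi(x,y)}\SDE$: first, that $\dx^\alpha \phi$ for $|\alpha|\ge 1$ lies in $\tO_{\delta_2}$, so every $x$-derivative of $\lambda$ is automatically supported away from the degeneracy set $\{x=0\}$; second, that each $(\xi,\eta)$-derivative of $\lambda$ lowers its order by one, modulo a prefactor that is either the constant $s$ or proportional to $\phi$. These two gains are what convert the raw commutators into the three model quantities allowed on the right-hand side.

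For $Q_1=[L_1,\Lambda]$ I would run the standard asymptotic expansion of the symbol of a composition up to order $-N$, absorbing the smoothing remainder into $\norm{u}_{H^{-N}}$. The principal symbolic contributions split into two families: (a) those in which some $\dx$ falls on $\lambda$, producing a factor $\dx\phi\cdot\log\J$; and (b) those in which some $\dxi$ falls on $\lambda$. Family (a) is supported in $\tO_{\delta_2}$, away from $x=0$, so after pairing with $\Lambda u$ one inserts a cutoff $\tld\phi\in\tO_{\delta_2}$ equal to $1$ on the relevant support and applies Cauchy--Schwarz; the ellipticity of $L_1$ in this region, combined with the positivity of $g(x)L_2$ (Lemma~\ref{pos}), lets us absorb half of the result into $\tfrac12 Re(L\Lambda u,\Lambda u)$ while leaving the tail $C_{\delta_2}\norm{\log\J\tld\phi\,\Lambda u}^2$. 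Family (b) yields operators of order $s$ with at worst one extra $\log\J$ and is bounded in $L^2$ by $C\norm{\Lambda u}^2$ using standard PDO boundedness.

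For $gQ_2 = g[L_2,\Lambda]$ the same expansion produces a leading term of the form $-ig\,\dy\phi\cdot\log\J\cdot\Lambda$ together with lower-order $\deta\lambda$ contributions. Since $\dy^\beta \phi$ is only bounded (class $\MO$) rather than localized away from $x=0$, we cannot absorb into $L_1$; instead, Cauchy--Schwarz against $\Lambda u$ yields the bound $C\norm{\sqrt g\log\J\,\Lambda u}^2 + C\norm{\Lambda u}^2$ exactly as claimed. For $Q_3 = [g,\Lambda]L_2$ the commutator $[g,\Lambda]$ has principal symbol $-i\,\dx g\cdot\dxi\lambda$, and I would then commute $L_2$ into a symmetric factorization $L_2^{1/2}\cdot L_2^{1/2}$ (modulo lower order using self-adjointness and Lemma~\ref{pos}), so that Cauchy--Schwarz produces $\sqrt g L_2^{1/2}\Lambda u$ on one side, controlled by $Re(gL_2\Lambda u,\Lambda u)^{1/2}\le Re(L\Lambda u,\Lambda u)^{1/2}$, together with a log-weighted $\Lambda$ factor on the other.

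The main obstacle is $Q_3$. Because $g$ may vanish to infinite order at $x=0$, there is no pointwise bound $|\dx g|\lesssim\sqrt g$, and one cannot factor $\sqrt g$ directly out of $\dx g$. The correct strategy is to keep $\dx g$ inside the symbolic composition and use that, on the support of $\dx g$, the companion factor $\dxi\lambda$ in $[g,\Lambda]$ automatically carries either a vanishing $\phi$ weight or a $\J^{-1}$ gain, so that the full quadratic form can be re-expressed via the positivity of $gL_2$ rather than via a bare bound on $\dx g$. Tracking this cancellation through several orders of the asymptotic expansion, while keeping constants independent of $\delta_5$ (see Remark~\ref{regularization}) and only polynomially dependent on $\delta_2$, is the technical heart of the proof and is where the specific choice of exponent $(N+s+3)\phi$ in $\lambda$ is tailored to match the Poincaré-type estimate of Proposition~\ref{Poin+Lambda}.
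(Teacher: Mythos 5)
Your outline reproduces the general shape of the argument (expand each commutator symbolically, exploit that $\dx^\alpha\phi\in\tO_{\delta_2}$ so $x$-derivatives of $\lambda$ live away from the degeneracy, and that $(\xi,\eta)$-derivatives of $\lambda$ gain an order), but it has three genuine gaps. First, you never use the anti-self-adjointness of the top-order parts, and without it the estimates fail: the families you call (b) for $Q_1$, and the leading term of $gQ_2$, are operators of order \emph{one} (times $\log\J$), e.g. $a_1=i\sum_{|\alpha|=1}\dx^\alpha l_1\,\dxi^\alpha\lambda/\lambda\in\tA_1$ and $g\,d_1\log\J$ with $d_1\in\mathcal{D}_1$. ``Standard PDO boundedness'' or Cauchy--Schwarz against $\Lambda u$ cannot bound these by $C\norm{\Lambda u}^2$ or $C\norm{\sqrt g\log\J\Lambda u}^2$; the paper first converts the product into a composition $\tld P_j\comp\Lambda$, observes that the order-one principal symbols are purely imaginary, and passes to $\tld P_j+\tld P_j^{*}$ (Lemmas \ref{q1-adjoint} and \ref{q2_adj} with Lemma \ref{real_parts}) so that only order-zero-with-log terms survive. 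Second, you attribute the absorption of $\tfrac12 Re(L\Lambda u,\Lambda u)$ in the $Q_1$ estimate to ``ellipticity of $L_1$'' on the support of $\tld\phi$ — but no such ellipticity is assumed anywhere ($L_1$ is only degenerate elliptic with a superlogarithmic estimate, and the set $|x|\ge\delta_2$ is special only for $g$, not for $L_1$). In the paper the $\tB$-type terms supported away from $x=0$ are bounded directly by $C_{\delta_2}\norm{\log\J\tld\phi\Lambda u}^2$ (Lemma \ref{Q1-b0}); the $Re(L\Lambda u,\Lambda u)$ contribution instead comes from the leftover symbol $c_0'\log\J$, which carries a factor $\dx^\alpha l_1$ not supported away from the degeneracy, and is controlled via the Oleinik--Radkevich inequality $\norm{(\dx^\alpha l_1)v}_{H^{-1}}^2\lesssim Re(L_1v,v)+\norm{v}^2$ (Lemmas \ref{lem:oleinik} and \ref{lem:Q1+log}). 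Your plan contains no mechanism that produces this term.

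Third, the ``main obstacle'' you identify for $Q_3$ is not an obstacle, and your workaround is not viable as stated. For a smooth nonnegative $g$ with bounded second derivatives one \emph{does} have the pointwise Wirtinger/Glaeser-type bound $|\dx^\alpha g|\le C\sqrt g$ ($|\alpha|=1$) on compact sets, irrespective of infinite-order vanishing (e.g. $|g'|^2\le 2g\,\norm{g''}_{L^\infty}$); this is exactly what the paper uses, after commuting so that $\dx^\alpha g$ multiplies $\log\J\comp\Lambda u$, to reach $C\norm{\sqrt g\log\J\Lambda u}^2$. Your alternative — a factorization $L_2^{1/2}\cdot L_2^{1/2}$ and an unspecified cancellation between $\dx g$ and $\dxi\lambda$ — is problematic: $L_2$ is degenerate and non-symmetric (first- and zero-order terms), so $L_2^{1/2}$ is not available in the calculus being used, and no estimate is actually derived from the claimed cancellation. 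As written, the proposal would not close the proof of either inequality in the proposition.
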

We defer the proof of this key proposition to a series of Lemmas in Section \ref{commutators} and continue the argument below.
\item Based on the superlogarithmic estimates \eqref{superlog} for $L_1$ and $L_2$, we estimate the logarithmic terms in the commutators above.
    \begin{lem}\label{lem:log:g} Let $\delta_0>0$ be given. Then there exists a decreasing function $C_{\delta_0}$, so that
     \begin{align}
   		&\norm{\sqrt{g} \log \J \Lambda u}^2 \le \delta_0 Re(L\Lambda u, \Lambda u) + C_{\delta_0} \norm{\Lambda u}^2\label{log_g}
  \end{align}

    \end{lem}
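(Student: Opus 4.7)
The plan is to exploit the elementary inequality $\jap{(\xi,\eta)}\le \jxi\jeta$, which yields $(\log \J)^2 \les (\log \jxi)^2 + (\log\jeta)^2$, and then apply the superlogarithmic estimate \eqref{superlog} for $L_1$ to the $x$-frequencies and for $L_2$ to the $y$-frequencies. Concretely, I would first split
\begin{align*}
\norm{\sqrt{g}\,\log \J\,\Lambda u}^2 \les \norm{\sqrt{g}\,\log\jxi\,\Lambda u}^2 + \norm{\sqrt{g}\,\log\jeta\,\Lambda u}^2.
\end{align*}

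For the $\log\jxi$ piece, since $g$ is bounded (Remark \ref{bounded-coeff}), Plancherel in $x$ (with $y$ frozen) gives
\begin{align*}
\norm{\sqrt{g}\,\log\jxi\,\Lambda u}^2 \le \norm{g}_\infty\int_{\R^m}\norm{\log\jxi\,\F_x(\Lambda u)(\cdot,y)}_{L^2_\xi}^2\,dy.
\end{align*}
Applying \eqref{superlog} for $L_1$ in the $x$-variable with a small parameter $\eps>0$ to $\Lambda u(\cdot,y)$, then integrating in $y$, produces a bound $\eps\norm{g}_\infty Re(L_1\Lambda u,\Lambda u) + C_\eps\norm{\Lambda u}^2$. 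Symmetrically, for the $\log\jeta$ piece I would use \eqref{superlog} for $L_2$ in the $y$-variable (with $x$ frozen), multiply by $g(x)\ge 0$, and integrate in $x$, obtaining $\eps\, Re(gL_2\Lambda u,\Lambda u) + C_\eps\norm{g}_\infty\norm{\Lambda u}^2$.

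To convert $(L_1\Lambda u,\Lambda u)$ and $(gL_2\Lambda u,\Lambda u)$ into $(L\Lambda u,\Lambda u)$, I would invoke the positivity of the complementary piece via Lemma \ref{pos}: $Re(gL_2\Lambda u,\Lambda u)\ge -C\norm{\Lambda u}^2$ gives $Re(L_1\Lambda u,\Lambda u)\le Re(L\Lambda u,\Lambda u)+C\norm{\Lambda u}^2$, and symmetrically $Re(L_1\Lambda u,\Lambda u)\ge -C\norm{\Lambda u}^2$ gives the analogous bound for $gL_2$. Choosing $\eps$ proportional to $\delta_0/\norm{g}_\infty$ and summing the two pieces yields the desired inequality \eqref{log_g}.

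The main obstacle is a technical subtlety: the superlog estimates in \eqref{superlog} are stated for test functions $v\in C^\infty_0(K_i)$ with compact support, whereas $\Lambda u$ is not compactly supported. Since the symbol $\lambda$ carries the cutoff $\phi$, $\Lambda u$ is, up to an $H^{-N}$-smoothing remainder, supported close to $\supp u$; I would therefore first localize $\Lambda u$ by a fixed bump function $\chi$ equal to $1$ on $\supp u$ before applying \eqref{superlog}, absorbing the residual $(1-\chi)\Lambda u$ either into $\norm{\Lambda u}^2$ or into lower-order error terms that are harmless in the overall scheme of Theorem \ref{main-Lambda}. Alternatively, using Remark \ref{bounded-coeff} one may extend \eqref{superlog} by density to $L^2$-functions with a fixed uniform constant, once the coefficients of $L_1,L_2$ have been arranged to be globally bounded.
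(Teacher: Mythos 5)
Your proposal is correct and follows essentially the same route as the paper: split $\log\J$ into $\log\jxi$ and $\log\jeta$ contributions, apply the superlogarithmic estimate for $L_1$ in $x$ and for $L_2$ in $y$ with the weight $g(x)$ frozen and then integrated (the paper phrases this as $(\sqrt{g}L_2\sqrt{g}\Lambda u,\Lambda u)$, which is the same thing since $g=g(x)$ commutes with $L_2$), and then use the positivity Lemma \ref{pos} to replace $L_1$ and $gL_2$ by the full $L$, choosing $\eps\sim\delta_0$. The compact-support caveat you raise is real but glossed over in the paper's own proof, and your localization fix is the natural remedy.
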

\begin{lem}[Interpolation in nondegenerate region]\label{lem:nondeg+} Let $\eps>0$ and $\tld \phi \in \tO$. Then there exist constants $\tilde{C}_{\delta_2}$ and $C_\eps$ (increasing in the reciprocals of parameters $\eps$ and $\delta_2$) such that the following estimate holds
  \begin{equation}\label{log_nondeg}
\norm{\log \J \tld \phi \Lambda u} \le \eps (L\Lambda u, \Lambda u) + \eps \tilde{C}_{\delta_2} \norm{\Lambda u}^2+C_\eps \norm{u}_{H^{-N}}
\end{equation}
\end{lem}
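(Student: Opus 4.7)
The plan is to exploit that $\tld\phi\in\tO_{\delta_2}$ is supported in the region where $g(x)$ is bounded below by a positive constant, so that the superlogarithmic hypothesis \eqref{superlog} may be used at full strength in both variables. By \eqref{tldO} the cutoff $\tld\phi$ vanishes on $\{|x|<\delta_2/2\}$, and by \eqref{g} this gives $g_{\delta_2}:=\min_{|x|\ge\delta_2/2}g(x)>0$. Thus on $\supp\tld\phi$ the weight $g(x)$ is comparable to the positive constant $g_{\delta_2}$, and the operator $L=L_1+gL_2$ effectively behaves as a ``two-variable superlog'' operator there.

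\emph{Step 1 (combined superlog).} Set $v:=\tld\phi\Lambda u$. Since $\Lambda u\in H^{s+3}$ thanks to $S_{\delta_5}$ and $\tld\phi\in C^\infty_0$, $v$ is compactly supported in the nondegenerate region, and a density argument lets me feed it into \eqref{superlog}. Applying the $L_1$ estimate in $x$ (and integrating in $y$), and the $L_2$ estimate in $y$ (and integrating in $x$ after multiplying by $g(x)\ge g_{\delta_2}$ on $\supp v$), I would get, for every $\eps'>0$,
\begin{align*}
\|\log\jxi\,\widehat v\|^2\le \eps'(L_1 v,v)+C_{\eps'}\|v\|^2,\qquad g_{\delta_2}\|\log\jeta\,\widehat v\|^2\le \eps'(gL_2 v,v)+C_{\eps'}\|v\|^2.
\end{align*}
Adding and using $\log\J\le\log\jxi+\log\jeta$ yields
\begin{align*}
\|\log\J\,v\|^2\le \frac{C\eps'}{g_{\delta_2}}(Lv,v)+C_{\eps',\delta_2}\|v\|^2.
\end{align*}

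\emph{Step 2 (pulling the cutoff off $L$).} Next, I would transfer $(Lv,v)$ to an expression involving $(L\Lambda u,\Lambda u)$ via
\begin{align*}
(Lv,v)=\Re(L\Lambda u,\tld\phi^2\Lambda u)+\Re([L,\tld\phi]\Lambda u,\tld\phi\Lambda u).
\end{align*}
Writing $L=-\sum a_{jk}\dxp{j}\dxp{k}-g\sum b_{jk}\dyp{j}\dyp{k}+R_1$ with $R_1$ of first order and integrating by parts produces diagonal terms of the form $\int a_{jk}\tld\phi^2(\dxp{j}\Lambda u)(\dxp{k}\Lambda u)$ (and $b$-analogues weighted by $g$), bounded by $\Re(L\Lambda u,\Lambda u)+C\|\Lambda u\|^2$ because $\tld\phi\le 1$ and by Lemma \ref{pos} applied to the pieces of $L$. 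The cross terms $\int a_{jk}(\dxp{j}\tld\phi)(\dxp{k}\Lambda u)\tld\phi\,\Lambda u$ I would bound by Cauchy--Schwarz for the non-negative quadratic forms $a_{jk}\xi_j\xi_k$ and $b_{jk}\eta_j\eta_k$ followed by Young's inequality, absorbing the gradient contribution back into the diagonal terms at the cost of $C_{\tld\phi,\delta_2}\|\Lambda u\|^2$. Remainders from the PDO calculus have order $\le-N$ and contribute $C\|u\|_{H^{-N}}^2$. The net estimate is
\begin{align*}
(Lv,v)\le C_0\,\Re(L\Lambda u,\Lambda u)+C_{\tld\phi,\delta_2}\|\Lambda u\|^2+C_{\tld\phi}\|u\|_{H^{-N}}^2.
\end{align*}

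\emph{Step 3 (combine).} Substituting into the Step 1 bound, using $\|v\|^2\le \|\tld\phi\|_\infty^2\|\Lambda u\|^2$, and finally choosing $\eps':=\eps g_{\delta_2}/(CC_0)$ turns the $(Lv,v)$ coefficient into $\eps$; the remaining terms collapse to $\eps\tld C_{\delta_2}\|\Lambda u\|^2+C_\eps\|u\|_{H^{-N}}^2$, which is the asserted estimate. The hardest step is Step 2: it transfers $(Lv,v)$ to $(L\Lambda u,\Lambda u)$ without the help of any coercive inequality on $L$. The non-negativity of the principal symbols of $L_1$ and $L_2$, the Cauchy--Schwarz inequality for the degenerate forms $a_{jk}\xi_j\xi_k$ and $b_{jk}\eta_j\eta_k$, and the crucial fact that $\supp\nabla\tld\phi\subset\{|x|\ge\delta_2/2\}$ are what make the absorption possible; the $\delta_2$-dependence in $\tld C_{\delta_2}$ simultaneously encodes the growth of $|\nabla\tld\phi|$ and of $g_{\delta_2}^{-1}$ as $\delta_2\to 0$.
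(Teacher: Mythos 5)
Your Steps 1 and 2 track the paper's Steps 1--4 closely: split $\log\J\le\log\jxi+\log\jeta$, apply \eqref{superlog} to $L_1$ and $L_2$ separately, insert $g(x)$ using $\inf_{|x|\ge\delta_2/2}g>0$ on $\supp\tld\phi$, then move the cutoff $\tld\phi$ off $L$ via the splitting $\tld\phi L\tld\phi=\tld\phi^2L+\tld\phi[L,\tld\phi]$ and nonnegativity of $(1-\tld\phi^2)L$. That part is fine.

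The gap is in Step 3, in how you dispose of the term $C_{\eps',\delta_2}\|v\|^2$ that comes with the superlog hypothesis. You bound it crudely by $C_{\eps',\delta_2}\|\tld\phi\|_\infty^2\|\Lambda u\|^2$ and then claim the remaining terms ``collapse'' to $\eps\tilde C_{\delta_2}\|\Lambda u\|^2+C_\eps\|u\|_{H^{-N}}^2$. But $C_{\eps'}$ comes from \eqref{superlog} and \emph{grows} as $\eps'\to 0$, and you have tied $\eps'$ to $\eps$. So this contribution is $C_\eps\|\Lambda u\|^2$ with $C_\eps\to\infty$ as $\eps\to 0$; it is not of the form $\eps\tilde C_{\delta_2}\|\Lambda u\|^2$, and it cannot be silently absorbed into $C_\eps\|u\|_{H^{-N}}^2$ either since $\|\Lambda u\|$ is not controlled by $\|u\|_{H^{-N}}$. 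This matters downstream: in the main argument one needs the $\|\Lambda u\|^2$ coefficient produced by this lemma to be $O(\eps)$ so that one can later choose $\eps=\eps(\delta_2)$ small and still end with a $\delta_2$-uniform constant in front of $\|\Lambda u\|^2$; a $C_\eps$-coefficient breaks that. The correct move (paper's Step 5) is to interpolate: by Lemma \ref{interpolation} with $a(\J)=\log\J$,
\begin{equation*}
C_{\eps'}\norm{\tld\phi\Lambda u}^2\le \tfrac{1}{2}\norm{\log\J\,\tld\phi\Lambda u}^2+C'_{\eps'}\norm{\tld\phi\Lambda u}_{H^{-N-s-3}}^2,
\end{equation*}
absorb the first piece into the left-hand side, and use $\Lambda\in S^s$ (with $S_{\delta_5}\in S^0$ uniformly) on the second to reduce it to $C_\eps\norm{u}_{H^{-N}}^2$. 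Without this interpolation the claimed shape of \eqref{log_nondeg} does not follow.
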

We defer the proofs of Lemmas \ref{lem:log:g} and \ref{lem:nondeg+} to the section \ref{superlog-with-weight}%
\item Combining Proposition \ref{prop:commutator} and Lemmas \ref{lem:log:g} and \ref{lem:nondeg+} we obtain
\begin{align*}
Re([L,\Lambda] u, \Lambda u)\leq & (C+\eps\cdot C_{\delta_2}\tilde{C}_{\delta_2}+C_{\delta_0}) \norm{\Lambda u}^2+ (\tfrac{1}{2}+ \eps\cdot C_{\delta_2}+\delta_0 \cdot C) Re(L\Lambda u, \Lambda u)\\&  + C_{\eps,\delta_2} \norm{u}_{H^{-N}}
\end{align*}
\item We substitute this estimate into \eqref{basic_comm} with a choice of $\delta_0= 1/(8C)$ and
$\eps=\min\{1/(8C_{\delta_2}),1/(C_{\delta_2}\tilde{C}_{\delta_2})\}$ for $\delta_2$ as in \eqref{phi}
\begin{equation*}
Re(L\Lambda u, \Lambda u)\leq Re(\Lambda L u, \Lambda u)+\tfrac{3}{4} Re(L\Lambda u, \Lambda u) + C\norm{\Lambda u}^2+C_{\delta_2} \norm{u}_{H^{-N}}
\end{equation*}
and thus
\begin{equation}\label{pre_final}
Re(L\Lambda u, \Lambda u)\leq 4Re(\Lambda L u, \Lambda u) + C\norm{\Lambda u}^2+C_{\delta_2} \norm{u}_{H^{-N}}
\end{equation}
\item Note that $\delta_1$ is uniquely determined by $\delta_2$, and the connection is given in Proposition \ref{Poin+Lambda}. Therefore, the constants depending on $\delta_1$ can be thought of as constants depending on $\delta_2$ and vice versa. Keeping that in mind, we use the estimate \eqref{pre_final} in \eqref{Lambda-to-comm} to obtain:
\begin{align*}
  \norm{\Lambda u}^2 \le \delta_1\left[ 4Re(\Lambda L u, \Lambda u)+ C \norm{\Lambda u}^2\right]+C_{\delta_1} \norm{u}_{H^{-N}}
\end{align*}

\item Using Cauchy-Schwarz in the estimate above gives
\begin{align*}
  \norm{\Lambda u}^2 \le \norm{\Lambda Lu}^2 + \delta_1\cdot C\norm{\Lambda u}^2 +C_{\delta_1} \norm{u}_{H^{-N}}
\end{align*}

Finally, choosing $\delta_1$ small enough (or equivalently choosing $\delta_2$ small enough) so that
 \begin{align*}
   \delta_1 \cdot C = \frac{1}{2},
 \end{align*}
the estimate above implies \eqref{main:est} and completes the proof.
\end{enumerate}

\subsection{Symbol classes}\label{symbol classes}
We introduce the following bump function and symbol classes to simplify recurring calculations. Simply put, the proof of Theorem \ref{main} fails with simple $L^2\to H^s$ boundedness of {\PDO} in the class $S^s$. The key ingredient in the proof of the main theorem (Theorem \ref{main}) is a careful analysis of the commutator $[L,\Lambda]$ performed in the Proposition \ref{prop:commutator}. In turn, the heart of the commutator argument are the support properties of the symbols and/or bounds on their semi-norms. The commutator $[L,\Lambda]$ has over 15 different terms on which calculus of {\PDO} is performed repeatedly causing ``migration" of properties. After a lot of trial and error the following classes of symbols proved helpful. We first define three symbol classes, $\tA$, $\tB$ and $\mathcal{D}$, and prove their boundedness properties to serve as motivation. 
\subsubsection{Symbol classes}\label{symb_classes}
First, we introduce classes of families of symbols depending on parameter $\delta_2$. All the functions, symbols, and constants in the definitions below depend on $\delta_2$, unless explicitly stated otherwise, but we will suppress the dependence in our notation later on. These are standard symbols with an additional requirement that their semi-norms are $\delta_2$ independent. More precisely,
\begin{align}
\label{MA} \mathcal{A}_{j}\equiv \mathcal{A}^{\delta_2}_{j}:= \{a^{\delta_2}_{j} \in S^j: |\partial_{\xi,\eta}^\alpha\partial_{x,y}^\beta a^{\delta_2}_{j}\xyx| \le C^{\delta_2}_{\alpha,\beta} \jap{\xi}^{j-|\alpha|} \text{ and } \limsup_{\delta_2\to 0} C^{\delta_2}_{\alpha,\beta}<\infty\}
\end{align}
     What is the difference between a class $\tA_j$ and a standard {\PDO} class $S^j$?\\
 The interested reader may recall the Remark \ref{delta-press}. That is, when talking about $a_j\in \mathcal{A}_j$, we really think of a $\delta_2$ dependent function $a_{j,\delta_2}\xyx$ with the property
\begin{align}\label{delta-independence}
  \sup_{\xyx;\delta_2\to 0}|\partial_{\xi,\eta}^\alpha\partial_{x,y}^\beta a_{j,\delta_2}\xyx| <\infty
\end{align}
From now on, whenever we say that a symbol (or semi-norm) is $\delta_2$ independent, we imply that the symbol satisfies \eqref{delta-independence}.\\

Ideally, all the terms arising from $[L,\Lambda]$ would be in the $\mathcal{A}_j$ classes for some $j$. Since this is not the case, we need to introduce the negative order symbols.
\begin{align}\label{B-}
  S^-:=\{b\in \cup_{0<\rho\le 1} S^{-\rho}_{1,1/2}\}
\end{align}
The essential properties of $S^-$ class will be established in Lemma \ref{bound-b-}.

Third, we need symbols supported away from degeneracy.
\begin{align}
 \label{tMB} \mathcal{\tld B}_j\equiv \mathcal{\tld B}^{\delta_2}_j := \left\{ \tld b^{\delta_2}_j\in S^j: \exists b_{j} \in S^j \text{ and } \tld \psi_{\delta_2} \in \tO_{\delta_2}, \text{ with } \tld b_j =\tld \psi_{\delta_2} (x,y) b_{j} \right\}
\end{align}
As above, $\tB_j$ is a subset or subclass of $S^j$, with a condition on the support of the symbol.\\

Fourth, we need symbols with $\mathcal{O}$ factor, i.e. symbols whose derivatives vanish in certain directions:
\begin{align}
 \label{tMA} \mathcal{\tld A}_j\equiv\mathcal{\tld A}^{\delta_2}_j:=\left\{ \tld a^{\delta_2}_j\in S^j: \exists m, a^{\delta_2}_{j,1}, \ldots a^{\delta_2}_{j,m} \ \in \mathcal{A}^j \text{ and } \psi^{\delta_2}_1,\ldots, \psi^{\delta_2}_m\in \mathcal{O} \right.\nonumber\\
  \left.\text{ with } \tld a^{\delta_2}_j =  \sum_{k=1}^m\psi^{\delta_2}_k(x,y) a^{\delta_2}_{j,k}\right\}
\end{align}
We need to allow for more than one $\tO$ bump function, because we demand to factor them from the $\mathcal{A}$. E.g. $\partial_{y_k} \phi + \frac{\xi}{\jap{\xi}}\phi$ for $\phi \in \tO$ cannot be written as $\psi(x,y)\cdot a_0\xyx$ for $\psi\in\tO$.\\

Fifth, we need a class of symbols $\tA$-like symbols, that are, in addition, independent of parameter $\xi$.
\begin{align}
\label{D} \mathcal{D}_j \equiv\mathcal{D}^{\delta_2}_j := \{ d^{\delta_2}_j \in \tA^{\delta_2}_j: d^{\delta_2}_j\xyx=d^{\delta_2}_j(x,y,\eta)\}
\end{align}
This implies that in addition to the boundedness properties of $\tA_j$, the symbols $d_j\in \mathcal{D}$ class commute with all functions of $x$. In particular,
\begin{align}\label{D-commutes}
  d_j(x,y,\eta) \sqrt{g(x)} =\sqrt{g(x)} d_j(x,y,\eta).
\end{align}
\subsubsection{Boundedness of symbol classes}
The main justification for introducing the language of $\tA$ (and $\tB$) classes is parameter-management of constants, that would be too tedious for the number of terms we have when we work with the commutator $[L,\Lambda]$. Namely, if we treat $\tA_0$ terms as generic $S^0$ symbols, their $L^2\to L^2$ boundedness constant would depend on $\delta_2$. Such a crude analysis would not allow our argument to proceed. Instead, we prove the following simple lemma that motivates the definition of $\tA$:
\begin{lem}\label{bound-sym}
  Let $\tld a_j \in \tld\MA_j$. Then there exist a constant $C$, independent of $\delta_2$, so that
  \begin{align*}
    & \norm{\tld a_j v}_{L^2} \le C\norm{ v}_{H^j}
  \end{align*}
 \end{lem}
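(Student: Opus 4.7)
The plan is to reduce the lemma to two classical facts: first, that any bump function from $\mathcal{O}$ yields a multiplication operator on $L^2$ whose norm is controlled uniformly as $\delta_2\to 0$; second, that any symbol in $\MA_j$ gives a $\Psi$DO bounded from $H^j$ to $L^2$, with operator norm controlled by finitely many of its symbol seminorms---each of which is $\delta_2$-uniform by the very definition \eqref{MA} of $\MA_j$.

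First I would unpack the defining decomposition to write $\tld a_j=\sum_{k=1}^m \psi_k(x,y)\,a_{j,k}(x,y,\xi,\eta)$ with $\psi_k\in\mathcal{O}$ and $a_{j,k}\in \MA_j$. Since each $\psi_k$ depends only on the spatial variables, the operator with symbol $\psi_k a_{j,k}$ factors as pointwise multiplication by $\psi_k$ composed with the $\Psi$DO of symbol $a_{j,k}$. By the triangle inequality it then suffices, for each fixed $k$, to establish a uniform-in-$\delta_2$ estimate of the form
\[
\norm{\psi_k\,\mathrm{Op}(a_{j,k})v}_{L^2}\le C\,\norm{v}_{H^j}.
\]

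For the multiplicative factor, specializing the second clause in the definition of $\mathcal{O}$ to $\beta=0$ gives $\limsup_{\delta_2\to 0}\norm{\psi_k}_{L^\infty}<\infty$, so multiplication by $\psi_k$ is bounded on $L^2$ uniformly for small $\delta_2$. For the pseudodifferential factor, I would write $a_{j,k}(x,y,\xi,\eta)=\bigl[a_{j,k}\jap{\xi,\eta}^{-j}\bigr]\cdot \jap{\xi,\eta}^{j}$ and note that $a_{j,k}\jap{\xi,\eta}^{-j}\in S^0$, whose $S^0$ seminorms are controlled, via Leibniz and the bound $|\dxie^\alpha\jap{\xi,\eta}^{-j}|\les \jap{\xi,\eta}^{-j-|\alpha|}$, by a finite collection of $\MA_j$ seminorms of $a_{j,k}$. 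The classical $L^2$-boundedness of $S^0$ $\Psi$DOs (see \cite{Kg81}) then controls the operator norm of $\mathrm{Op}(a_{j,k}\jap{D}^{-j})$ by finitely many such seminorms, all of which are $\delta_2$-uniform thanks to \eqref{MA}. Composing with $\jap{D}^{j}:H^{j}\to L^2$ finishes this factor, and combining the two yields the desired bound.

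The only delicate point---really a bookkeeping one---is threading the $\delta_2$-uniformity through the argument: both the standard $\Psi$DO $L^2$ bound and the $L^\infty$ control of $\psi_k$ depend only on finitely many seminorms of the respective objects, and the definitions of $\MA_j$ and $\mathcal{O}$ ensure those seminorms remain bounded as $\delta_2\to 0$. Beyond this bookkeeping I do not anticipate a genuine obstacle; this lemma functions as a warm-up for the more subtle boundedness statements about the $\tB_j$ and $S^-$ classes established immediately afterwards.
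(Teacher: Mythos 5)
Your proposal is correct and follows essentially the same route as the paper: factor each term as multiplication by $\psi_k\in\mathcal{O}$ (bounded in $L^\infty$ uniformly in $\delta_2$ by \eqref{O}) composed with $\mathrm{Op}(a_{j,k})$, then invoke $H^j\to L^2$ boundedness with constants controlled by the $\delta_2$-uniform seminorms from \eqref{MA}. The only difference is that you spell out the standard reduction of the $H^j\to L^2$ bound to $L^2$-boundedness of $S^0$ symbols, which the paper simply takes as known.
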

 Note that if $a_j$ was a member of $\MA_j$ instead of $\tA_j$, no work would have been needed. Indeed, by \eqref{MA}, the seminorms of such symbol are independent of the parameter $\delta_2$. Hence their $H^j\to L^2$ norms are independent.
\begin{proof}
  From \eqref{tMA}
  \begin{align*}
    \tld a_j =  \sum_{k=1}^m\psi_k(x,y) a_{j,k}\xyx\text{ for }a_{j,k}\in \mathcal{A}_j
  \end{align*}
  and thus
  \begin{align*}
    & \norm{\tld a_j v}_{L^2} \le \sum_k \norm{\psi_{k}(x,y)}_{L^\infty}\norm{a_{j,k}v}_{L^2}\\
    & \le C \norm{ v}_{H^j}
  \end{align*}
  since $\norm{\psi_{k}(x,y)}_{L^\infty}$ is bounded independently of $\delta_2$, which follows from the definition (\ref{O}).
 \end{proof}
The following Lemma demonstrates the motivation for the class $\tB$. The actual estimate needed has a few more technical details but the spirit of the argument is still captured.
\begin{lem}\label{bound-b-toy}
   Let $\tld b \in \tld \MB_0$, and $\tld b = \tld \psi b_0$ with $\tld \psi \in \MO$ as in \eqref{tMB}. Then there exists a constant $C_{\delta_2}$, that depends on $\delta_2$, such that
  \begin{align}\label{tMB-bound}
    & \norm{\tld b v}_{L^2} \le C_{\delta_2} \|\tld \psi v\|_{L^2} + \norm{v}_{H^{-1}}
  \end{align}

\end{lem}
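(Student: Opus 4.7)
The plan is to decouple $\tld b = \tld\psi\cdot b_0$ into a plain multiplication by $\tld\psi(x,y)$ and the pseudodifferential action of $b_0\in S^0$, and then exchange the order of these two operations at the cost of a single lower-order commutator. Because $\tld\psi(x,y)$ has no $(\xi,\eta)$-dependence, the standard quantization of the product $\tld\psi\, b_0$ factors as $\tld b(x,y,D)v = \tld\psi\cdot \mathrm{Op}(b_0)v$, and therefore
$$\tld b\, v \;=\; \tld\psi\cdot b_0(x,y,D) v \;=\; b_0(x,y,D)\bigl[\tld\psi v\bigr] \;+\; \bigl[\tld\psi,\, b_0(x,y,D)\bigr] v.$$

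The first term I would control by the $L^2$-boundedness of operators in $S^0$ (Calder\'on--Vaillancourt), giving $\norm{b_0(x,y,D)[\tld\psi v]}_{L^2}\le C\norm{\tld\psi v}_{L^2}$, where $C$ depends only on finitely many seminorms of the fixed symbol $b_0\in S^0$ and is therefore independent of $\delta_2$. This delivers the $\norm{\tld\psi v}_{L^2}$ contribution on the right-hand side.

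The second term is handled by the composition calculus: one term from the asymptotic expansion of $\tld\psi\,\#\, b_0 - b_0\,\#\,\tld\psi$ already gains one order in $\xi$, so $[\tld\psi, b_0(x,y,D)]\in S^{-1}$. By $H^{-1}\to L^2$ boundedness of $S^{-1}$ operators, $\norm{[\tld\psi, b_0(x,y,D)] v}_{L^2}\le C_{\delta_2}\norm{v}_{H^{-1}}$. The $\delta_2$-dependence enters through the seminorms of $\tld\psi$: as $\tld\psi\in\tO_{\delta_2}$ and its support shrinks toward $\{|x|=\delta_2/2\}$, its derivatives may grow and are absorbed into $C_{\delta_2}$. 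Adding the two bounds gives the stated estimate.

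There is no real obstacle here; the step warranting the most care is the bookkeeping of where $\delta_2$ appears in the constants. The point of the lemma, made transparent by this commutation argument, is that the $L^2$-norm on $\supp\tld\psi$ of $b_0(D)v$ is controlled by the $L^2$-norm of $\tld\psi v$ itself (rather than by a larger cutoff of $v$), the discrepancy being absorbed entirely into a single $H^{-1}$-error with a $\delta_2$-dependent constant; this is precisely what the commutator decomposition above accomplishes, and it is the same mechanism that will be leveraged in the more technical variants used later in the commutator analysis of $[L,\Lambda]$.
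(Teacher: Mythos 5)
Your argument is correct and essentially the same as the paper's: the paper writes $\tld b = b_0 \comp \tld\psi \mod S^{-1}$ ("composition calculus backwards") and then applies $L^2$ and $H^{-1}\to L^2$ boundedness, which is exactly your exact factorization $\mathrm{Op}(\tld\psi\, b_0)=\tld\psi\cdot\mathrm{Op}(b_0)$ followed by the commutator $[\tld\psi,\mathrm{Op}(b_0)]\in S^{-1}$. One small caveat: your claim that the constant on $\norm{\tld\psi v}_{L^2}$ is $\delta_2$-independent is not justified, since in the class $\tB_0$ the seminorms of $b_0$ may themselves depend on $\delta_2$ (the paper notes this explicitly), but the stated bound allows $C_{\delta_2}$ there, so this does not affect the validity of your proof.
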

\begin{proof}
  From \eqref{tMB} we have $\tld b = \tld \psi b_0$, where $b_0\in S^0$. Note that the semi-norms of the symbol $b_0$ may depend on $\delta_2$.\\

  Now doing the composition calculus of the {\PDO} backwards
  \begin{align*}
    \tld b = b_0 \comp \tld \psi \mod S^{-1}
  \end{align*}
  Now use $L^2$ boundedness to complete the proof.
\end{proof}
We also motivate the use of class $S^-$, which is simply the symbols of order less than $0$, whose support properties we do not track and whose semi-norms may depend on the parameter $\delta_2$. We also state a simple interpolation lemma, that we use repeatedly.
\begin{lem}\label{interpolation}
  Let $a(\xi)$ be a positive increasing function, with $\lim_{|\xi|\to \infty} a(\xi)=\infty$. Then for any $\eps>0$ and $N>0$, there exists a constant $C_{\eps,N}$, so that for any $v\in \Schw(\R^n)$
  \begin{align}\label{eq:interpolation}
    \norm{v}^2 \le \eps \norm{a(\xi)\hat{v}(\xi)}^2 + C_\eps \| v\|^2_{H^{-N}}
  \end{align}
\end{lem}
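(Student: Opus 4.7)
The plan is the standard frequency-splitting argument: control $\|v\|^2 = \|\hat v\|^2_{L^2}$ by decomposing the Fourier space into a high-frequency piece, where the weight $a(\xi)$ is large, and a low-frequency piece, where the Sobolev weight $\jap{\xi}^{-N}$ is bounded below.

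First, I would introduce the auxiliary quantity
\[
  a_\ast(R) := \inf_{|\xi|\ge R} a(\xi).
\]
Since $a$ is positive and increasing with $a(\xi)\to\infty$ as $|\xi|\to\infty$, we have $a_\ast(R)\to\infty$ as $R\to\infty$. In particular, given $\eps>0$, I can fix $R=R(\eps)$ so that $1/a_\ast(R)^2 \le \eps$.

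Next, by Plancherel I would split
\[
  \norm{v}^2 = \int_{|\xi|\le R} |\hat v(\xi)|^2\,d\xi + \int_{|\xi|>R} |\hat v(\xi)|^2\,d\xi.
\]
On the high-frequency piece I bound
\[
  \int_{|\xi|>R} |\hat v(\xi)|^2\,d\xi \le \frac{1}{a_\ast(R)^2}\int_{|\xi|>R} a(\xi)^2 |\hat v(\xi)|^2\,d\xi \le \eps\,\norm{a(\xi)\hat v(\xi)}^2.
\]
On the low-frequency piece, I insert the Sobolev weight
\[
  \int_{|\xi|\le R} |\hat v|^2\,d\xi \le \jap{R}^{2N}\int_{|\xi|\le R} \jap{\xi}^{-2N}|\hat v(\xi)|^2\,d\xi \le \jap{R}^{2N}\norm{v}_{H^{-N}}^2.
\]
Setting $C_{\eps,N} := \jap{R(\eps)}^{2N}$ then yields the desired inequality \eqref{eq:interpolation}.

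There is no real obstacle here beyond making sure that the hypothesis ``positive increasing'' is interpreted so that $a_\ast(R)\to\infty$, which follows directly from $\lim_{|\xi|\to\infty}a(\xi)=\infty$; monotonicity is in fact not strictly needed for the high-frequency bound, only that the infimum of $a$ over $|\xi|\ge R$ tends to infinity. The whole proof is thus a three-line Plancherel split and should take only a short paragraph in the paper.
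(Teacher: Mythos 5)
Your proposal is correct and is essentially the paper's own argument: a Plancherel split at frequency $R$, bounding the high-frequency piece by $\eps$ times the weighted norm (the paper uses monotonicity of $a$ to get the factor $1/a(R)$, you use $\inf_{|\xi|\ge R}a$, which amounts to the same thing) and the low-frequency piece by $\jap{R}^{2N}\norm{v}_{H^{-N}}^2$, then choosing $R$ large in terms of $\eps$. If anything, your version is slightly more careful about squaring the weight, which matches the statement as written.
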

\begin{proof}
  Let $R>1$. By Plancherel
  \begin{align*}
    \norm{v}^2 & =\frac{1}{(2\pi)^n}\left(\int_{|\xi|\ge R} \frac{a(\xi)}{a(\xi)} |\hat{v}^2(\xi)| d\xi +  \int_{|\xi|\le R} \frac{\jap{R}^N}{\jap{R}^N} |\hat{v}^2(\xi)| d\xi \right)
  \end{align*}
 Since $a(\xi)$ and $\jap{\xi}$ are monotone functions we can estimate the integrals above as
  \begin{align*}
    \norm{v}^2 \le \frac{1}{(2\pi)^n}\left(\frac{1}{a(R)}\int_{|\xi|\ge R} a(\xi) |\hat{v}^2(\xi)| d\xi +  \jap{R}^N \int_{|\xi|\le R} \jap{\xi}^{-N} |\hat{v}^2(\xi)| d\xi \right)
  \end{align*}
  Choosing $R$ large enough, so that $\frac{1}{a(R)(2\pi)^n}\le \eps$ completes the proof.
\end{proof}
\begin{lem}\label{bound-b-}
  Let $\eps>0$, $N$, $s$ and $\delta_2$ be given. Then given a symbol $b^-\in S^-$ there exists a constant $C_{\eps,\delta_2,N,s}$, such that for any $v \in \Schw$
  \begin{align*}
    \| b^- v\| \le \eps \|v\| + C_{\eps,\delta_2,N,s}\norm{v}_{H^{-N-s}}
  \end{align*}
\end{lem}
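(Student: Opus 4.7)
The plan is two-step: first upgrade the strict negative order of $b^-$ to the Sobolev mapping bound $\norm{b^- v}\le C_{\delta_2}\norm{v}_{H^{-\rho}}$, and then interpolate $\norm{v}_{H^{-\rho}}$ between $\norm{v}$ and $\norm{v}_{H^{-N-s}}$. For the first step, unpack the definition of $S^-$ in \eqref{B-} to obtain $\rho\in(0,1]$ with $b^-\in S^{-\rho}_{1,1/2}$. Since $S^0_{1,1/2}$ acts boundedly on $L^2$ (Calder\'on--Vaillancourt, valid because $1>1/2$), a routine composition argument --- $\jap{D}^{\rho}\comp b^-(x,D)$ is of class $S^0_{1,1/2}$ --- gives the mapping property $b^-(x,D):H^{-\rho}\to L^2$. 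The constant depends on finitely many seminorms of $b^-$, which are $\delta_2$-dependent but finite, hence the notation $C_{\delta_2}$.

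For the second step, apply Lemma \ref{interpolation} with $a(\xi)=\jap{\xi}^\rho$ to the auxiliary distribution $v_0:=\jap{D}^{-\rho}v$. This identifies $\norm{v_0}=\norm{v}_{H^{-\rho}}$ and $\norm{a(\xi)\hat{v_0}}=\norm{v}$, so that, combined with the trivial embedding of negative-order Sobolev spaces, it yields for any $\eps'>0$
\begin{equation*}
\norm{v}_{H^{-\rho}}^2\le \eps'\norm{v}^2+C_{\eps',N,s}\norm{v}_{H^{-N-s}}^2.
\end{equation*}
Substituting into the Step 1 bound gives $\norm{b^- v}\le C_{\delta_2}\sqrt{\eps'}\norm{v}+C_{\delta_2}\sqrt{C_{\eps',N,s}}\norm{v}_{H^{-N-s}}$, and choosing $\eps':=(\eps/C_{\delta_2})^2$ produces the claim with $C_{\eps,\delta_2,N,s}:=C_{\delta_2}\sqrt{C_{\eps',N,s}}$.

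The only real subtlety is bookkeeping of constants: unlike the classes $\MA_j$ or $\tA_j$, the class $S^-$ does not control seminorms uniformly in $\delta_2$, so the $H^{-\rho}\to L^2$ bound in Step 1 carries a $\delta_2$-dependent constant that must be extracted before the interpolation parameter $\eps'$ is chosen in terms of $\eps$ and $C_{\delta_2}$. The crucial analytic input is the strict positivity $\rho>0$, without which the high-frequency part of the frequency split in Lemma \ref{interpolation} could not absorb into $\norm{v}$ with an arbitrarily small prefactor.
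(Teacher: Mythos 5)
Your proposal is correct and follows essentially the same route as the paper: use the definition of $S^-$ to get $b^-\in S^{-\rho}_{1,1/2}$ for some $\rho>0$, apply PDO boundedness for $\norm{b^-v}\le C_{\delta_2}\norm{v}_{H^{-\rho}}$, and then invoke Lemma \ref{interpolation} with $a(\xi)=\jap{\xi}^\rho$. You merely make explicit two points the paper leaves terse: the application of Lemma \ref{interpolation} must be to $v_0=\jap{D}^{-\rho}v$ rather than to $v$ directly, and the interpolation parameter should be $(\eps/C_{\delta_2})^2$ (the paper's ``replace $\eps$ with $\eps/C_{\delta_2}$'' is off by a square since the lemma is stated for squared norms).
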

\begin{proof}
  By the definition of $S^-$ in \eqref{B-} $b^-\in S^{-\rho}_{1,\frac{1}{2}}$ for some $\rho>0$. Applying boundedness of {\PDO} gives
  \begin{align*}
    \norm{b^- v}\le C_{\delta_2} \norm{v}_{H^{-\rho}}
  \end{align*}
  Apply Lemma \ref{interpolation} with $a(\xi)=\jap{\xi}^\rho$. In order to eliminate the constant $C_{\delta_2}$ in front of $v$, replace $\eps$ in Lemma \ref{interpolation} with $\frac{\eps}{ C_{\delta_2}}$.
\end{proof}

\subsection{Derivatives of Operators and symbols}\label{derivatives}
The following lemma provides useful estimates on the derivatives of $\lambda$.
\begin{lem}\label{lambda-classes}
 Let $\lambda(x,y,\xi,\eta)$ be defined by (\ref{lambda}). Then we have
 \begin{align}
 \frac{\dx^\alpha \lambda}{\left[\log\J\right]^{|\alpha|}\cdot \lambda} &\in  \mathcal{\tld B}_{0}\text{ for } |\alpha|>0\label{lambda-x-2}\\
   \frac{\dy^\alpha \lambda}{\left[\log\J\right]^{|\alpha|}\cdot \lambda} &\in  \mathcal{\tld A}_{0} \text{ for } |\alpha|>0\label{lambda-y-2}\\
  \frac{\partial_{\xi,\eta}^{\beta} \lambda}{\lambda} &\in  \mathcal{\tld A}_{-|\beta|}, \text{ for } |\beta|>0 \label{lambda-xi}
  \end{align}
\end{lem}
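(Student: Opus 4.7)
My plan is to compute each derivative of $\lambda$ by rewriting it in exponential form and applying a Fa\`a di Bruno–type expansion. With $m(x,y):=s-(N+s+3)\phi(x,y)$, we have
$$\lambda\xyx = \jap{\xi,\eta}^{m(x,y)}\SDE(\xi,\eta) = e^{m(x,y)\log\J}\SDE(\xi,\eta).$$
Since both $\SDE$ and $\log\J$ are independent of $(x,y)$, spatial derivatives act only on the $x,y$-dependence of the exponent, and Fa\`a di Bruno's formula yields
$$\frac{\dx^\alpha\lambda}{\lambda} = \sum_\pi c_\pi\bigl(-(N+s+3)\bigr)^{|\pi|}\prod_{B\in\pi}\dx^{\alpha_B}\phi\cdot(\log\J)^{|\pi|},$$
with the analogous formula for $\dy^\alpha$. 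The sum runs over partitions of the multi-index $\alpha$ into nonempty blocks $\alpha_B$, so $1\le|\pi|\le|\alpha|$.

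For \eqref{lambda-x-2}, dividing by $(\log\J)^{|\alpha|}$ produces summands of the form $\prod_B \dx^{\alpha_B}\phi\cdot(\log\J)^{|\pi|-|\alpha|}$. The factor $(\log\J)^{|\pi|-|\alpha|}$ is an $S^0$ symbol whose seminorms are bounded independently of $\delta_2$ since $|\pi|\le|\alpha|$. By \eqref{phi-derivatives} each $\dx^{\alpha_B}\phi$ vanishes for $|x|\le\delta_2$, so their product is a compactly supported smooth function vanishing for $|x|\le\delta_2$, hence lies in $\tO_{\delta_2}$. Each summand therefore sits in $\tB_0$. The argument for \eqref{lambda-y-2} is identical after replacing $\dx$ with $\dy$: by \eqref{phi-derivatives}, each $\dy^{\alpha_B}\phi$ has $x$-derivatives vanishing for $|x|\le\delta_2$ and uniformly bounded $y$-derivatives, so $\dy^{\alpha_B}\phi\in\MO$. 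Closure of $\MO$ under products then places the summands in $\MO\cdot\MA_0\subset\tA_0$.

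For \eqref{lambda-xi} I would argue by induction on $|\beta|$. Writing $\dxie^\beta\lambda=\lambda\,Q_\beta$ gives the recursion $Q_{\beta+e_j}=\partial_{\xi_j}Q_\beta+Q_\beta\cdot Q_{e_j}$ (for a $\xi_j$ derivative, analogously for $\eta_j$). The base case $|\beta|=1$ follows from the direct calculation
$$\frac{\partial_{\xi_j}\lambda}{\lambda} = m(x,y)\,\frac{\xi_j}{\jap{\xi,\eta}^2}+\frac{\partial_{\xi_j}\SDE}{\SDE},$$
where the first term is the $\MO$-function $m(x,y)$ multiplied by the $\MA_{-1}$ symbol $\xi_j/\jap{\xi,\eta}^2$, while the second is a purely frequency-dependent $\MA_{-1}$ symbol with $\delta_5$-uniform seminorms by Remark \ref{regularization}. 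The inductive step combines this recursion with closure of the $\tA_k$ classes under differentiation in $(\xi,\eta)$ (which lowers the order by one) and under pointwise multiplication.

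The main technical work is the bookkeeping of closure properties: verifying that $\MO$ and the $\tA_j,\tB_j$ classes are stable under differentiation in $(\xi,\eta)$, under pointwise products, and under multiplication by nonpositive powers of $\log\J$, while keeping all seminorms $\delta_2$- and where relevant $\delta_5$-independent as demanded by Section \ref{symb_classes}. These closure properties follow from Leibniz's rule and the explicit definitions of $\MO$, $\tO_{\delta_2}$, and $\MA_j$; once they are in place, all three claims reduce directly to the Fa\`a di Bruno expansion above.
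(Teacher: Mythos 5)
Your proposal is correct and follows essentially the same route as the paper: compute the first-order derivatives explicitly (your base case for $\partial_{\xi_j}\lambda/\lambda$ is exactly the paper's formula, and your recursion $Q_{\beta+e_j}=\partial_{\xi_j}Q_\beta+Q_\beta Q_{e_j}$ is precisely the paper's identity \eqref{lambda-xi-2}), then iterate using the closure of $\MO$, $\tA_j$, $\tB_j$ under products, $(\xi,\eta)$-differentiation, and nonpositive powers of $\log\J$. Your Fa\`a di Bruno expansion merely makes explicit what the paper summarizes as ``differentiating again and iterating,'' so there is no substantive difference.
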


\begin{proof}
 First, we can calculate
 \begin{align*}
\partial_{x_{k}}\lambda &=-\phi_{x_{k}}(N+s+3)\log\jap{\xi,\eta}\lambda\\
\partial_{y_{k}}\lambda &=-\phi_{y_{k}}(N+s+3)\log\jap{\xi,\eta}\lambda
\end{align*}
Therefore, for $|\alpha|=1$ (\ref{lambda-x-2}) and (\ref{lambda-y-2}) follow immediately from the definitions of $\phi$ and $\mathcal{\tld B}_{0}$ and $\mathcal{\tld A}_{0}$ classes. Differentiating again and iterating gives the result for $|\alpha|\geq 2$. To show (\ref{lambda-xi}) we calculate
\begin{align*}
  \partial_{\xi_{k}}\lambda& = \left(\left[s\frac{\xi_{k}}{\jap{\xi,\eta}^2}+\frac{\partial_{\xi_{k}} S_{\delta}}{S_{\delta}}\right]\cdot 1 -(N+s+3)\frac{\xi_{k}}{\jap{\xi,\eta}^2}\phi\right)\lambda
\end{align*}

and therefore we obtain
\begin{align*}
  \tld a_{-\alpha}:=\frac{\dxi^\alpha \lambda}{\lambda} \in  \mathcal{\tld A}_{-|\alpha|} 
\end{align*}
for $|\alpha|=1$. With $\tld a_{\alpha'}$ defined as above for any $|\alpha'|>0$, we use the product rule for $|\alpha|=1$ to obtain
\begin{align}\label{lambda-xi-2}
  \frac{\dxi^{\alpha+\alpha'} \lambda}{\lambda}=\tld a_{-\alpha}\cdot \tld a_{-\alpha'} + \dxi^{\alpha} \tld a_{-\alpha'}
\end{align}
From the definition of $\tA_j$, $\dxie^\alpha$ derivatives lower the order into $\tA_{j-|\alpha|}$ and by moving all $\tO$ functions we preserve class membership as well. Thus \eqref{lambda-xi-2} implies that
\begin{align*}
  \frac{\dxi^\alpha \lambda}{\lambda} \in  \mathcal{\tld A}_{-|\alpha|}
\end{align*}
A similar computation is valid, for $\deta^\alpha \lambda$.
\end{proof}

We now derive some useful properties of the following symbols
\begin{align}
\l_{1}(x,\xi)&=\sum\limits_{j,k=1}^{n}\alpha_{jk}(x)\xi_{j}\xi_{k}+\sum\limits_{j=1}^{n}i\alpha_{j}(x)\xi_{j}+\alpha_{0}(x)\label{l1}\\
\l_{2}(y,\eta)&=\sum\limits_{j,k=1}^{m}\beta_{jk}(y)\eta_{j}\eta_{k}+\sum\limits_{j=1}^{m}i\beta_{j}(y)\eta_{j}+\beta_{0}(y)\label{l2}
\end{align}
Where we denote by $l_1$ and $l_2$ the symbols of the operators $L_1$ and $L_2$ respectively.

\begin{lem}\label{Lemma-L1}
 Let $l_{1}$ and $l_{2}$ be defined by (\ref{l1}) and (\ref{l2}). The following properties hold
 \begin{itemize}
  \item $l_1,\;l_2\in \mathcal{A}_{2}$
  \item $l_1$ and $l_2$ are self-adjoint principal symbols, i.e. $\overline{l_i} - l_i \in \mathcal{A}_1$
  \item for $|\alpha|=2$, $\dxi^\alpha l_{1}$ and $\deta^\alpha l_{2}$ depend only on $x$ and $y$ respectively.
 \end{itemize}
\end{lem}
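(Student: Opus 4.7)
The plan is to verify each of the three bullet points directly from the definitions of $l_1$, $l_2$ and of the class $\mathcal{A}_j$ in \eqref{MA}; no clever argument is needed, this is essentially a bookkeeping check.

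First I would establish $l_1,l_2\in\mathcal{A}_2$. The symbol $l_1(x,\xi)$ is a polynomial of degree $2$ in $\xi$ whose coefficients $\alpha_{jk},\alpha_j,\alpha_0$ are smooth real functions of $x$ that are independent of the parameter $\delta_2$. By Remark \ref{bounded-coeff} we may assume these coefficients belong to $W^{\infty,\infty}(\mathbb{R}^n)$, so that for all multi-indices $\alpha,\beta$ one has the standard polynomial-symbol bound
\begin{equation*}
|\partial_\xi^\alpha\partial_x^\beta l_1(x,\xi)|\le C_{\alpha,\beta}\jap{\xi}^{2-|\alpha|},
\end{equation*}
with $C_{\alpha,\beta}$ independent of $\delta_2$. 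This is exactly the $\delta_2$-independence condition in \eqref{MA}, so $l_1\in\mathcal{A}_2$. The argument for $l_2$ is identical with $y$ in place of $x$.

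Next I would handle the self-adjointness of the principal symbol. Because $\alpha_{jk},\alpha_j,\alpha_0$ are real-valued, complex conjugation only flips the sign of the purely imaginary first-order piece:
\begin{equation*}
\overline{l_1}-l_1 = -2i\sum_{j=1}^n \alpha_j(x)\xi_j,
\end{equation*}
which is a real-coefficient polynomial of degree $1$ in $\xi$ with $\delta_2$-independent bounds. By the same check as in the first step this lies in $\mathcal{A}_1$; analogously for $l_2$.

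Finally, for the last item, observe that for $|\alpha|=2$ every term in $l_1$ of order at most $1$ is annihilated by $\partial_\xi^\alpha$, so only the principal quadratic form contributes. Since $\partial_\xi^\alpha(\xi_j\xi_k)$ is a combinatorial constant (equal to $\delta_{\alpha, e_j+e_k}$ up to the multinomial factor), one obtains a finite linear combination of the $\alpha_{jk}(x)$'s, which depends on $x$ alone; the analogous statement for $\partial_\eta^\alpha l_2$ depending only on $y$ is identical. There is no real obstacle here; the mild care point is simply to confirm that all constants produced come out independent of $\delta_2$, which is immediate because the coefficients of $L_1$ and $L_2$ are fixed and unrelated to the localization parameter.
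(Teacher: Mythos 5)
Your proof is correct and follows essentially the same route as the paper's: a direct verification from the definitions, with the last item obtained by computing $\partial_{\xi_j\xi_k}l_1=\alpha_{jk}(x)$ and $\partial_{\eta_j\eta_k}l_2=\beta_{jk}(y)$. You simply spell out the details (the polynomial symbol bounds via Remark \ref{bounded-coeff} and the cancellation $\overline{l_1}-l_1=-2i\sum_j\alpha_j(x)\xi_j$, which is purely imaginary of order one) that the paper dispatches with ``follow immediately.''
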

\begin{proof}
 The first two properties follow immediately from (\ref{l1}) and (\ref{l2}), as the operators are independent of $\delta_2$ and related localization. To show the last property we differentiate
\begin{align*}
\partial_{\xi_{k}\xi_{j}}l_{1} &=\alpha_{jk}(x)\\
\partial_{\eta_{k}\eta_{j}}l_{2} &=\beta_{jk}(y)
\end{align*}
which concludes the proof.
\end{proof}

\hide{
\subsection{List of main estimates}
Main goal
  \begin{align}
   & \norm{\Lambda u}^2 \le \delta_1 (L\Lambda u, \Lambda u) + \delta_1 C \norm{\Lambda u}^2 \label{Poin+L1log}\\
   & (L\Lambda u, \Lambda u) = (\Lambda L u, \Lambda u) + ([L,\Lambda] u, \Lambda u)\\
    \begin{split}
    & ([L,\Lambda] u, \Lambda u) \le C \norm{\sqrt{g} \log \J \Lambda u}^2 + C\norm{\Lambda u}^2+ \frac{1}{2}(L\Lambda u, \Lambda u)\\
     & + C_{\delta_1}\norm{\log\J \tld \phi \Lambda u}^2 + C_{\delta_1} \norm{\Lambda u}_{H^{-1/2}} +C_{\delta_1}\norm{u}_{H^{-N}}^2
   \end{split}\label{Commutator}\\
    & \norm{\sqrt{g} \log \J \Lambda u}^2 \le \delta_0 (L\Lambda u, \Lambda u) + C_{\delta_0} \norm{\Lambda u}^2 \label{L2log}\\
    & \norm{\log \J \tld \phi \Lambda u} \le \eps (L\Lambda u, \Lambda u) + \eps C_{\delta_2} \norm{\Lambda u}^2+C_\eps\norm{\tld \phi \Lambda u}^2 \label{Non-deg-log}
  \end{align}
  The main challenge is the \eqref{Commutator} step, where we split
  $[L,\Lambda] = [L_1, \Lambda]+ g(x)[L_2,\Lambda] + [g(x),\Lambda] L_2:=Q_1+Q_2+Q_3$.
  It looks like there is an extra factor in $Q_1$ that requires a different treatment than what's accounted for in \eqref{Commutator},\\

  Hierarchy of constants.\\
1. \eqref{L2log} produces $\delta_0\le \frac{1}{C}$ to absorb $C\norm{\sqrt{g} \log \J \Lambda u} \le \frac{1}{2}(L\lambda u, \Lambda u)$
  This produces $C_{\delta_0}\norm{\Lambda u}$\\
  2. \eqref{Poin+L1log} allows us to absorb $C_{\delta_0}\norm{\Lambda u}$ with $\delta_1$ (terms like $\norm{\sqrt{g} \log \J \Lambda u}$ came with $\delta_1$ constant in front)\\
  3. To get $\delta_1$ constant in \eqref{Poin+L1log}, $\Lambda$ has to localize $u$ to the region $[-\delta_2,\delta_2]$ in $x$, for $\delta_2$ - much smaller than $\delta_1$\\
  4. Terms $\norm{\log\J \tld \phi \Lambda u}^2$ come with a $C_{\delta_2}$ constant (from $\dx \phi$ terms). They are absorbed by $\delta_3 = \eps$ in \eqref{Non-deg-log}.\\
  5. Term $C_\eps\norm{\tld \phi \Lambda u}\le \frac{1}{2}\norm{\log\J\tld \phi \Lambda u}+C_\eps \norm{u}_{H^{-N}}$ is interpolated.
}

\subsection{Poincar\'{e} inequality}\label{poincare ineq}
We first prove a preliminary Lemma that is only valid for compactly supported functions, and then deduce Proposition \ref{Poin+Lambda} as a simple corollary.
\begin{lem}[Poincare]\label{Poin}
 Suppose $v\in C^1_0(B_{\delta_2}(0))$, where $B_{\delta_2}(0)\subset\mathbb{R}^{n}$ is a ball of radius $\delta_2$ centered at the origin. Then
  \begin{align*}
    \norm{v}^2 \le \delta_1\norm{\log\jap{\xi} \hat v(\xi)}^2
  \end{align*}
	where $\delta_1=\frac{2(2\pi)^{n}}{ \log^2 {4\pi{\delta_2}}}$. In particular, by choosing $\delta_2$ small enough, we can make $\delta_1$ appropriately small.
\end{lem}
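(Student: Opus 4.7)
The plan is to reduce everything to Fourier space via Plancherel and then split the frequency integral at a radius $R$ to be chosen as a function of $\delta_2$. The small support of $v$ will tame the low-frequency part through a standard Cauchy--Schwarz estimate, and the $\log\jap{\xi}$ weight will trivially dominate the high-frequency part with a prefactor of $\log^{-2}\jap{R}$, which becomes small precisely when $R$ is forced to be large (which in turn happens because $\delta_2$ is small).

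More concretely, I would write $\norm{v}^{2}=(2\pi)^{-n}\norm{\hat v}^{2}$ and decompose
\begin{equation*}
  \int_{\R^n}|\hat v(\xi)|^{2}\,d\xi \;=\; \int_{|\xi|\le R}|\hat v(\xi)|^{2}\,d\xi \;+\; \int_{|\xi|> R}|\hat v(\xi)|^{2}\,d\xi.
\end{equation*}
For the high-frequency integral, since $\log\jap{\xi}$ is increasing in $|\xi|$, the trivial bound
\begin{equation*}
  \int_{|\xi|>R}|\hat v(\xi)|^{2}\,d\xi \;\le\; \frac{1}{\log^{2}\jap{R}}\int_{|\xi|>R}\log^{2}\jap{\xi}\,|\hat v(\xi)|^{2}\,d\xi \;\le\; \frac{\norm{\log\jap{\xi}\hat v}^{2}}{\log^{2}\jap{R}}
\end{equation*}
is all that is needed. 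For the low-frequency integral, I use that $v$ is supported in $B_{\delta_2}(0)$, so by Cauchy--Schwarz
\begin{equation*}
  |\hat v(\xi)| \;\le\; \norm{v}_{L^{1}} \;\le\; |B_{\delta_2}|^{1/2}\,\norm{v}_{L^{2}},
\end{equation*}
which plugged into the low-frequency integral gives a bound of $|B_R|\,|B_{\delta_2}|\,\norm{v}^{2}$.

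Combining these and multiplying by $(2\pi)^{-n}$, I would arrive at an inequality of the shape
\begin{equation*}
  \norm{v}^{2} \;\le\; \frac{|B_R|\,|B_{\delta_2}|}{(2\pi)^{n}}\,\norm{v}^{2} \;+\; \frac{1}{(2\pi)^{n}\log^{2}\jap{R}}\,\norm{\log\jap{\xi}\,\hat v}^{2}.
\end{equation*}
Choosing $R$ so that $|B_R|\,|B_{\delta_2}|\le\tfrac12(2\pi)^{n}$ (which forces $R$ of order $1/\delta_2$ with an explicit dimensional constant, producing the factor $4\pi\delta_2$ inside the logarithm after bookkeeping the $(2\pi)^n$ and $\omega_n$ constants), one absorbs the first term on the left and obtains the stated inequality with $\delta_{1}=2(2\pi)^{n}/\log^{2}(4\pi\delta_2)$.

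There is no real obstacle here; the only mildly delicate point is keeping track of the explicit constants (volume of the unit ball, Plancherel normalization) so that the final coefficient in front of $\log^{-2}$ matches the stated $2(2\pi)^n$ with the argument $4\pi\delta_2$. The qualitative content, namely that $\delta_1\to 0$ as $\delta_2\to 0$, follows immediately once $R$ is chosen to scale like $\delta_2^{-1}$. Proposition \ref{Poin+Lambda} is then obtained by applying this lemma to $v=\Lambda u$ restricted to the degenerate region where $\phi\equiv 0$, combined with Remark \ref{regularization} on $S_{\delta_5}$ to handle the part of $\Lambda u$ supported outside that region, which lives in $H^{s+3}$ and is therefore controlled by $\norm{u}_{H^{-N}}$.
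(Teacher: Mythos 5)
Your argument is correct and is essentially the paper's own proof: Plancherel, a frequency split at $R\sim\delta_2^{-1}$, the Cauchy--Schwarz bound $|\hat v(\xi)|\le|B_{\delta_2}|^{1/2}\norm{v}$ from the small support for low frequencies, monotonicity of $\log\jap{\xi}$ for high frequencies, and absorption of the low-frequency term. The only differences are cosmetic constant bookkeeping (Plancherel normalization and $\omega_n$ factors), on which the paper itself is equally loose, and none of this affects the essential conclusion that $\delta_1\to 0$ as $\delta_2\to 0$.
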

\begin{proof}
From the definition of the Fourier transform and Cauchy-Schwartz inequality
  \begin{align}\label{FTdel}
    \abs{\hat v(\xi)} = \left\vert\int_{|x|\leq \delta_2} e^{-i x\cdot \xi} v(x) dx\right\vert \le \omega_{n}^{\frac{1}{2}}{\delta_2}^\frac{n}{2}\norm{v},
  \end{align}
  where $\omega_n$ is the volume of an $n$-dimensional unit ball. Next by Plancherel
  \begin{align*}
    \norm{v}^2 \le (2\pi)^{n} \int_{\abs{\xi}\le \frac{1}{C\delta_2}} \abs{\hat v(\xi)}^2 d\xi+ (2\pi)^{n}\int_{\abs{\xi}\ge \frac{1}{C\delta_2}} \abs{\hat v(\xi)}^2 d\xi: = II_1+II_2
  \end{align*}
  for any $C>0$. Using \eqref{FTdel} for $II_1$ and monotonicity of the logarithm for $II_2$ gives
  \begin{align*}
    \norm{v}^2 \le \left(\frac{2\pi}{C}\right)^{n}\norm{v}^2 + \frac{(2\pi)^{n}}{ \log^2 \frac{1}{C{\delta_2}}}\int_{\abs{\xi}\ge \frac{1}{C{\delta_2}}} \log^2 |\xi|\abs{\hat v(\xi)}^2 d\xi
  \end{align*}
  Thus for $C=4\pi$,
  \begin{align*}
    \norm{v}^2 \le  \frac{2(2\pi)^{n}}{ \log^2 {4\pi{\delta_2}}}\int_{\abs{\xi}\ge \frac{1}{C{\delta_2}}} \log^2 \jap{\xi}\abs{\hat v(\xi)}^2 d\xi
  \end{align*}
  \end{proof}
	\begin{proof}[Proof of Proposition \ref{Poin+Lambda}]
Let $v=\psi \Lambda u$ for $\phi\Subset \psi$ with $\phi$ from \eqref{lambda} and use the fact that $(1-\psi)\Lambda \in S^{-N}$
  \todo{Finish}
\end{proof}
\subsection{Superlog estimates}\label{superlog-with-weight}
In this section we prove Lemmas \ref{lem:log:g} and \ref{lem:nondeg+}

\begin{proof}[Proof of Lemma \ref{lem:log:g}]
  First, an elementary calculation shows:
\begin{align*}
   \norm{\sqrt{g}\log \J v}\le & \norm{\sqrt{g}(x) \log \jeta v} + \norm{\sqrt{g}(x)\left[\log \J - \log \jeta\right]v}\\
   & \le \norm{\sqrt{g}(x) \log \jeta v} + C\norm{\left[\log \J - \log \jeta\right]v}
 \end{align*}
 where we used $\norm{g}_{L^\infty}\le C$ in the second line. Next, it is easy to see that
 \[
 0\le \log \J - \log \jeta \le \log \jxi
 \]
 Thus we have $\norm{\left[\log \J - \log \jeta\right]v} \le \norm{\log \jxi v}$ and hence
 \begin{align}\label{g-derivative}
    \norm{\sqrt{g}\log \J v}\le \norm{\sqrt{g}(x) \log \jeta v} + C\norm{\log \jxi v}
 \end{align}

 We now apply this estimate to our operator. From estimate \eqref{superlog} for $L_2$ with $\eps =\frac{\delta_0}{2}$  
  \begin{align*}
   & \norm{\sqrt{g(x)} \log\jeta \Lambda u}^2 \le  \frac{\delta_0}{2} (\sqrt{g}L_2\sqrt{g}\Lambda u, \Lambda u) + C_{\delta_0} \norm{\sqrt{g}\Lambda u}^2
   \end{align*}
   Using positivity of the operator $L_1$ (Lemma \ref{pos})  and  the estimate $\norm{g}_{L^\infty}\le C$ (see Remark \ref{bounded-coeff}) we obtain from the estimate  above
   \begin{align*}
     & \norm{\sqrt{g(x)} \log\jeta \Lambda u}^2 \le  \frac{\delta_0}{2} Re(L\Lambda u, \Lambda u) + C_{\delta_0} \norm{\Lambda u}^2
   \end{align*}
   Now let $\eps = \frac{\delta_0}{2C}$ for $C>0$ from \eqref{g-derivative}. Apply \eqref{superlog} for $L_1$ with this choice of $\eps$ and use the positivity of $gL_2$, Lemma \ref{pos}, as above to obtain
   \begin{align*}
     \norm{\log\jxi \Lambda u}^2 \le  \frac{\delta_0}{2C} Re(L\Lambda u, \Lambda u) + C_{\delta_0} \norm{\Lambda u}^2
   \end{align*}
   Combining the last two estimates with \eqref{g-derivative} completes the proof.
%
%

\end{proof}
\begin{proof}[Proof of Lemma \ref{lem:nondeg+}]
Recall that we are aiming to prove the following estimate:
\begin{equation*}
\norm{\log \J \tld \phi \Lambda u}^2 \le \eps Re(L\Lambda u, \Lambda u) + \eps C_{\delta_2} \norm{\Lambda u}^2+C_\eps \norm{u}^{2}_{H^{-N}}
\end{equation*}
The proof will proceed in five steps:
\begin{enumerate}
  \item Split the logarithm similar to Lemma \ref{lem:log:g}.
  \item Use estimate \eqref{superlog} for $L_1$ and $L_2$.
  \item Insert $g(x)$ using the support of $\tld \phi \Lambda u$.
  \item Remove $\tld \phi$ from the operator.
  \item Interpolate the remainders.
\end{enumerate}
The first three steps are essentially identical to the Proof of Lemma \ref{hypo_away_2}, with more detail.
\begin{enumerate}
  \item From $\J\le \jap{\xi}\jap{\eta}$ we get
  \begin{align*}
     \log \J\le \log \jeta + \log \jxi
  \end{align*}
  Thus
  \begin{align}\label{split-log}
    \norm{\log\J \tld\phi \Lambda u} \le \norm{\log\jeta\tld\phi \Lambda u}+\norm{\log \jxi \tld\phi \Lambda u}
  \end{align}
  \item
  Using \eqref{superlog} for $L_1$ we get for any $\eps_1>0$
  \begin{align*}
    \norm{\log\jxi\tld\phi \Lambda u}^2 \le \eps_1Re(L_1 \tld\phi \Lambda u, \tld\phi \Lambda u) + C_{\eps_1}\norm{\tld\phi \Lambda u}^2
  \end{align*}
  Meanwhile, \eqref{superlog} for $L_2$ gives for any $\eps_2>0$
  \begin{align*}
    \norm{\log\jeta\tld\phi \Lambda u}^2 \le \eps_2Re(L_2\tld\phi \Lambda u,\tld\phi \Lambda u) + C_{\eps_2}\norm{\tld\phi \Lambda u}^2
  \end{align*} 
 where we have integrated in the complementary variables to recover the full norms. Adding to \eqref{split-log} gives
\begin{align}\label{superlog-L1L2}
  \norm{\log\J \tld\phi \Lambda u}^2 \le 2\eps_1Re(L_1 \tld\phi \Lambda u, \tld\phi \Lambda u)+2\eps_2Re(L_2\tld\phi \Lambda u,\tld\phi \Lambda u) + C_{\eps_1,\eps_2}\norm{\tld\phi \Lambda u}^2
\end{align}
\item Define a decreasing function $C_{\delta_2}$ by
\begin{align*}
  \frac{1}{C_{\delta_2}}=\inf_{|x|\ge \delta_2} \{g(x)\}
\end{align*}
In particular, we have $|x|\ge \delta_2$ on support of $\tld \phi$, and thus $g(x)-\frac{1}{C_{\delta_2}}\ge 0$ there. Therefore, the principal symbol of the operator $(g(x)-\frac{1}{C_{\delta_2}})L_2$ is nonnegative on the support of $\tld \phi$. We now use this positivity together with Lemma \ref{pos} from Appendix to get an operator estimate:
\begin{align*}
  Re\left((g(x)-\frac{1}{C_{\delta_2}})L_2 \tld\phi \Lambda u, \tld\phi \Lambda u\right) \ge -\tilde{C}_{\delta_2}\norm{\tld\phi \Lambda u}^2
\end{align*}
Moving all the terms to one side and multiplying by $2\eps_2C_{\delta_2}>0$ we obtain
\begin{align}\label{positive-g}
  0\le 2\eps_2 C_{\delta_2} Re\left((g(x)L_2 \tld\phi \Lambda u, \tld\phi \Lambda u\right) - 2\eps_2Re(L_2 \tld\phi \Lambda u, \tld\phi \Lambda u) + 2\eps_2 \tilde{C}_{\delta_2}C_{\delta_2}\norm{\tld\phi \Lambda u}^2
\end{align}
Finally, we add \eqref{superlog-L1L2} to \eqref{positive-g} and set $\eps_1=\eps_2C_{\delta_2}=\frac{\eps}{2}$ for $\eps>0$ to obtain
\begin{align*}
  \norm{\log\J \tld\phi \Lambda u}^2\le \eps Re(L\tld\phi \Lambda u,\tld\phi \Lambda u)+ C_{\eps}\norm{\tld\phi \Lambda u}^2
\end{align*}
\item We now proceed to eliminate $\tld \phi$ from the operator. The process is similar to the previous step.
First, observe that
  \begin{align*}
    \tld \phi L\tld \phi = \tld \phi^2 L + \tld \phi[ L,\tld \phi] = I + II
  \end{align*}
 We next compute the symbol of the operator $II=\tld \phi[ L,\tld \phi] \in \tld S^1 $, whose principal symbol is anti-self adjoint. Therefore, we can estimate it
  \begin{align*}
    Re(\tld \phi[ L,\tld \phi] v,v) \le C_{\tld \phi} \norm{v}^2.
  \end{align*}
To estimate the $I$ term we add non-negative expression to recreate operator $L$. More precisely,
  $(1-\tld \phi^2) L\ge 0$ is a non-negative operator and satisfies
  \[
  Re((1-\tld \phi^2) Lv,v) \ge- C\norm{v}^2
  \]
  Hence
  \begin{align*}
    Re(\tld \phi^2 L v ,v ) \le Re(L v, v) + C\norm{v}^2
  \end{align*}
  Combining the estimates for $I$ and $II$ we obtain
  \begin{align*}
    Re(L\tld\phi \Lambda u,\tld\phi \Lambda u)\le Re(L \Lambda u, \Lambda u)+ C_{\tld \phi} \norm{\Lambda u}^2.
  \end{align*}
  Returning to the end of the previous step we conclude with
  \begin{align}\label{nondeg-almost}
    \norm{\log\J \tld\phi \Lambda u}^2\le \eps Re(L \Lambda u, \Lambda u)+ \eps C_{\tld \phi} \norm{ \Lambda u}^2+C_\eps \norm{\tld\phi \Lambda u}^2
  \end{align}
  \item  To complete the proof it suffices to eliminate the last term in the estimate. We do so by interpolation, Lemma \ref{interpolation}. 
Namely, we apply Lemma \ref{interpolation} with $a(\J)=\log\J$ and $N+s+3$ in place of $N$ to obtain
 $$
 \norm{v}\le \eps'\norm{\log\J\hat v} + C_{\eps'}\norm{v}_{H^{-N-s-3}}
 $$
 Now let $v=\tld \phi \Lambda u$, it is easy to check that $v\in  H^3$ so we can substitute it in the formula above. Using $\Lambda \in S^s$, we obtain for $\eps'=\frac{C_\eps}{2}$ small enough
 \begin{equation*}
C_\eps\norm{\tld \phi \Lambda u}\le \frac{1}{2}\norm{\log\J\tld \phi \Lambda u}+C_{\eps} \norm{u}_{H^{-N}}
\end{equation*}
The last estimate provides together with \eqref{nondeg-almost} gives \eqref{log_nondeg}.
\end{enumerate}
\end{proof}
%
%
%

\section{Commutators}\label{commutators}
We now estimate the commutators $Q_1$, $Q_2$, and $Q_3$ separately. The outline of all three commutators goes through the same four steps, which we outline here.
\begin{enumerate}
  \item[(0)] When working with commutators we distinguish symbols of several classes that were defined in section \ref{symbol classes}:
  \begin{itemize}
    \item Classes $\mathcal{\tld B}$ - supported away from the degenerate region, which allows them to absorb logarithms.
    \item Classes $\mathcal{A}$ - with norms independent of the localization to degenerate region
    \item Classes $\mathcal{\tld A}$ - symbols that are flat in the degenerate region. These operators give good constants without the logs or when the weight $g(x)$ is added.
    \item Classes $\mathcal{\tld D}$ - symbols in $\mathcal{\tld A}$ class that in addition do not depend on $\xi$.
    \item Classes $S^-$ - symbols of negative order
    \item Symbols that we denote $c_0$ that do not belong to any of the above classes, and are dealt with separately in Lemma \ref{lem:Q1+log}.
  \end{itemize}
  \item Express the symbol of the commutator $q_j=p_j\cdot \lambda \mod S^{-N}$ using calculus of PDO and accounting for symbol classes from above
  \item Convert the product of symbols into the composition: $Q_j=\tld P_j\comp \Lambda\mod S^{-N}$, again accounting for supports as above
  \item Find the adjoint part of the operator $\tld P_j$ to use the cancellation
  \item Use the structure of the classes to obtain the estimates needed for Proposition \ref{prop:commutator}
\end{enumerate}
\subsection{Commutator $Q_1$. Calculus}
We consider $Q_1=[L_1,\Lambda]$. From the calculus of PDO (treating $\lambda \in S^{s}_{1,\eps}$ for arbitrary small $\eps>0$) we get
\begin{align}\label{q1-first}
  q_1 = \sum_{|\alpha|=1}^{2} \frac{i^{|\alpha|}}{\alpha!} \left(\dxi^\alpha l_1 \dx^\alpha \lambda - \dx^\alpha l_1 \dxi^\alpha \lambda\right) -\sum_{|\alpha|=3}^{(N+s+1)} \frac{i^{|\alpha|}}{\alpha!}  \dx^\alpha l_1 \dxi^\alpha \lambda\,\, mod \,\,S^{-N}_{1,\eps}
\end{align}
This computation used $y$-independence of $l_1$.\\

We now begin following the outline by decomposing the $q_1$ term into the classes of symbols $\tA$, $\tld \B,\ldots$. See section \ref{symbol classes} as technical details arise.
\begin{lem}\label{q1-sym}
There exists an operator
\begin{align}
  \label{p1}
  \begin{split}
   & p_1=  b_1\log\J  +  b_0 \log^2 \J  +  a_1  +  a_0  + r^{-}\\
& \text{so that } q_1=p_1 \cdot \lambda  \mod \,S^{-N}_{1,\eps}
  \end{split}
\end{align}
where every term is explicitly defined in terms of given operators in the proof, but in particular,
\begin{align}\label{p1-split}
  b_i\in \tB_i;\,\, a_i\in \tA_i\text{, for }i=0,1; \,\, r^{-} \in S^-
\end{align}
Furthermore, the principal part of symbols $ b_1$ and $ a_1$ are purely imaginary.
\end{lem}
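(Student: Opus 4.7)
The plan is to take the explicit expansion \eqref{q1-first}, algebraically factor $\lambda$ out of each summand, and classify every resulting coefficient according to the symbol classes of Section \ref{symbol classes}. The identities $\dx^\alpha\lambda=(\log\J)^{|\alpha|}\tld b^{(\alpha)}\lambda$ with $\tld b^{(\alpha)}\in\tB_0$ and $\dxi^\alpha\lambda=\tld a^{(\alpha)}\lambda$ with $\tld a^{(\alpha)}\in\tA_{-|\alpha|}$ are supplied by Lemma \ref{lambda-classes}. Combined with Lemma \ref{Lemma-L1}, which gives $\dxi^\alpha l_1\in\MA_{2-|\alpha|}$ and $\dx^\alpha l_1\in\MA_2$ with real principal symbols, this reduces the matter to a term-by-term bookkeeping exercise; the ``$\mathrm{mod}\,S^{-N}_{1,\eps}$'' error in \eqref{q1-first} is inherited directly by $q_1-p_1\lambda$.

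For the first sum in \eqref{q1-first}, which is truncated at $|\alpha|=2$ because $l_1$ is second-order in $\xi$, the $|\alpha|=1$ summand equals $i\,\dxi^\alpha l_1\cdot\tld b^{(\alpha)}\,\log\J\cdot\lambda$, and since $\MA_1\cdot\tB_0\subset\tB_1$ (a direct consequence of $\tB_j=\tO\cdot S^j$) this contributes $b_1\log\J\cdot\lambda$ with $b_1\in\tB_1$. The $|\alpha|=2$ summand lies in $\MA_0\cdot\tB_0\cdot\log^2\J\cdot\lambda$ and produces the $b_0\log^2\J$ piece. For the second sum, the $|\alpha|=1$ summand is $-i\,\dx^\alpha l_1\cdot\tld a^{(\alpha)}\cdot\lambda\in\MA_2\cdot\tA_{-1}\cdot\lambda\subset\tA_1\cdot\lambda$, giving $a_1$; the $|\alpha|=2$ summand lies in $\MA_2\cdot\tA_{-2}\subset\tA_0$, giving $a_0$; and for $|\alpha|\ge 3$ the coefficient lies in $\MA_2\cdot\tA_{-|\alpha|}\subset S^{-1}\subset S^-$, so these finitely many terms sum to $r^-$.

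The imaginary-principal-part claims are verified by direct inspection. The top-order part of $b_1$ is $i\cdot\partial_{\xi_k}\!\bigl(\alpha_{jl}(x)\xi_j\xi_l\bigr)\cdot\bigl(-(N+s+3)\partial_{x_k}\phi\bigr)$, a product of $i$ with two real factors and hence purely imaginary; for $a_1$, the principal part of $\partial_{\xi_k}\lambda/\lambda$ computed in the proof of Lemma \ref{lambda-classes} equals the real quantity $[s-(N+s+3)\phi]\xi_k/\jap{\xi,\eta}^2+\partial_{\xi_k}\SDE/\SDE$, which multiplied by the real principal symbol $\partial_{x_k}(\alpha_{jl}\xi_j\xi_l)$ of $\dx^\alpha l_1$ and by $-i$ yields a purely imaginary principal part. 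The only real obstacle is bookkeeping: verifying the class-multiplication rules $\MA_j\cdot\tB_k\subset\tB_{j+k}$ and $\MA_j\cdot\tA_k\subset\tA_{j+k}$ while keeping the $\delta_2$-uniform seminorms of $\MA$-factors separated from the $\delta_2$-dependent seminorms of $\tB$- and $\tA$-factors. All of this is immediate from the definitions \eqref{MA}, \eqref{tMB}, \eqref{tMA}, since multiplication by an $\MA$-element preserves both the support structure and the $\delta_2$-dependence pattern of the other factor.
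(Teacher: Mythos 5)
Your proposal is correct and follows essentially the same route as the paper: factor $\lambda$ out of the expansion \eqref{q1-first}, use Lemma \ref{lambda-classes} to identify $\dx^\alpha\lambda/(\lambda\log^{|\alpha|}\J)\in\tB_0$ and $\dxi^\alpha\lambda/\lambda\in\tA_{-|\alpha|}$, combine with Lemma \ref{Lemma-L1} via the multiplication rules $\MA_j\cdot\tB_k\subset\tB_{j+k}$, $\MA_j\cdot\tA_k\subset\tA_{j+k}$, and read off the purely imaginary principal parts of $b_1$ and $a_1$ from the realness of $l_1$'s coefficients and of $\dx\phi$, $\dxi\lambda/\lambda$. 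Your explicit verification of the class-multiplication rules and of the principal parts is only a slightly more detailed version of the bookkeeping the paper performs in \eqref{q1-product}.
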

\begin{proof}
 From the calculus of \eqref{q1-first} we obtain the following expressions by separating terms by first order, zero-th order and negative order and factoring $\lambda$:
 \begin{align*}
   q_1 = \lambda\left([b_1\log \J + a_1] + [b_0\log^2 \J + a_0] + r^{-}\right) \mod S^{-N}
 \end{align*}
 Recall, from Lemma \ref{lambda-classes}, $\dx^\alpha\lambda$ terms lead to logarithms, with a redeeming $\tB$ class symbol. We group them with derivatives of $l_1$ into $b_i$s. $\dxi^\alpha\lambda$ terms are logarithm free and have symbols in $\tA$ classes. By Lemma \ref{Lemma-L1} the factors of $l_1$ are independent of the degeneracy and do not influence class membership. To confirm the reasoning above we explicitly write down all the terms of non-negative order, where we need the explicit for of the $a_1$ for later calculations.
\begin{align}\label{q1-product}
 &  b_1 :=  i \sum_{|\alpha|=1} \dxi^\alpha l_1 \frac{\dx^\alpha \lambda}{\lambda\cdot\log\J} \in \tB_1 &  b_0 :=  -\sum_{|\alpha|=2}\frac{1}{\alpha!}  \dxi^\alpha l_1 \frac{\dx^\alpha \lambda}{\lambda\cdot\log^{|\alpha|}\J} \in \tB_0;\\
 &  a_1 := i\sum_{|\alpha|=1}\frac{1}{\alpha!} \dx^\alpha l_1  \frac{\dxi^\alpha\lambda}{\lambda} \in \tA_1 &  a_0 := -\sum_{|\alpha|=2}\frac{1}{\alpha!} \dx^\alpha l_1  \frac{\dxi^\alpha\lambda}{\lambda} \in \tA_0;\nonumber\\
 & r^{-}:= \sum_{j=3}^{N+s+1}\sum_{|\alpha|=j}\frac{i^{|\alpha|}}{\alpha!} \dx^\alpha l_1\frac{ \dxi^\alpha\lambda}{ \lambda} \in S^- &
\end{align}


Finally, the principal parts of symbols $ b_1$ and $ a_1$  are purely imaginary, from their definition and because corresponding parts of $\lambda$ and $l_1$ are real by the definition of $\lambda$ in \eqref{lambda} and \eqref{l1}.
\end{proof}
We now convert the product in Lemma \ref{q1-sym} into a composition.
\begin{lem}\label{q1-comp}
  $q_1$ can be rewritten as $q_1 =  \tld p_1 \comp \lambda  \mod \,S^{-N}$ with
  \begin{align}\label{tldq-1}
    \tld p_1 =  b_1\log\J +   a_1+ b_0' \log^2 \J  +  a_0+ c_0\log\J  + \tld r^{-};
  \end{align}
  where $b_1$, $a_1$ and $a_0$ are from Lemma \ref{q1-sym}; $b_0'\in \tB_0$ and $\tld r^{-}\in S^-$ are explicitly defined in terms of given operators in the proof. We also have $c_0\in \tA$, where $c_0$ is defined explicitly by the following formula:
  \begin{align}
    c_0= \sum_{|\alpha|=|\beta|=1}\dx^\alpha l_1 \deta^\beta \left[ \frac{\dxi^\alpha\lambda}{\lambda}\right] \frac{\dy^\beta \lambda}{\lambda\log \J} \label{c0}
  \end{align}
  \end{lem}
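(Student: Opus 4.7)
The plan is to convert the product expression $q_1 = p_1\cdot\lambda$ from Lemma \ref{q1-sym} into the composition $q_1 = \tld p_1 \comp \lambda \mod S^{-N}$ using the standard \PDO composition formula
\[
\tld p_1 \comp \lambda \sim \tld p_1\cdot \lambda + \sum_{|\alpha|\ge 1} \frac{(-i)^{|\alpha|}}{\alpha!} \dxie^\alpha \tld p_1 \cdot \dxy^\alpha \lambda.
\]
Setting $\tld p_1 = p_1$ as a first approximation and equating the right side with $p_1\cdot\lambda$ forces us to subtract the $|\alpha|\ge 1$ sum above from $\tld p_1$; after factoring out $\lambda$, I would classify each summand $\dxie^\alpha p_1 \cdot (\dxy^\alpha\lambda/\lambda)$ and absorb it back into a revised $\tld p_1$. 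Iterating produces successively lower-order corrections, and the process terminates because the composition series is truncated at $S^{-N}$.

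For each summand, Lemma \ref{lambda-classes} immediately identifies the class of the $\lambda$-factor: $x$-derivatives give $\tB_0\cdot\log^{|\alpha|}\J$-type factors, $y$-derivatives give $\tA_0\cdot\log^{|\alpha|}\J$-type factors, and $\xi,\eta$-derivatives give $\tA_{-|\alpha|}$-type factors. Most corrections fit into classes already appearing in \eqref{tldq-1}: $\dxi^\alpha$ applied to $b_1\log\J$ or to $a_1$, paired with $\dx^\alpha\lambda/\lambda \in \tB_0\log\J$, produces $\tB_0\log^2\J$ contributions absorbed into a redefined $b_0'$; corrections from the order-zero components of $p_1$, or from $|\alpha|\ge 2$ multi-indices, drop into $S^-$ and are absorbed into $\tld r^-$. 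The single distinguished contribution is the cross term $\deta a_1 \cdot \dy\lambda/\lambda$ at $|\alpha|=1$: substituting the explicit expression $a_1 = i\sum_{|\alpha|=1}\frac{1}{\alpha!} \dx^\alpha l_1 \cdot \dxi^\alpha\lambda/\lambda$ from \eqref{q1-product} and writing $\dy\lambda/\lambda = \bigl(\dy\lambda/(\lambda\log\J)\bigr)\cdot\log\J$ reproduces exactly $c_0\log\J$ with $c_0$ as in \eqref{c0}. A class count using Lemma \ref{lambda-classes}, namely $\dx l_1\in\MA_2$, $\deta[\dxi\lambda/\lambda]\in\tA_{-2}$, and $\dy\lambda/(\lambda\log\J)\in\tA_0$, then yields $c_0\in\tA$.

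The main obstacle is bookkeeping rather than conceptual: the expansion carries many cross terms, and each must be checked to fall into one of $\tB_0$, $\tA_j$, $S^-$, or else to be the exceptional $c_0\log\J$. No individual correction is delicate, but maintaining $\delta_2$-independent constants throughout — the very purpose of introducing the classes $\tA$, $\tB$, $\MA$, $\MB$ — requires disciplined application of Lemma \ref{lambda-classes} at every step, and it is precisely the failure of the $\deta a_1 \cdot \dy\lambda$ term to be expressible using only these classes that necessitates the extra $c_0\log\J$ summand in the target expression \eqref{tldq-1}.
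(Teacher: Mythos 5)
Your proposal is correct and follows essentially the same route as the paper: expand the composition $\tld p_1\comp\lambda$ via the \PDO{} calculus, match it term by term against $p_1\cdot\lambda$, absorb the tractable corrections into a redefined $b_0'\in\tB_0$ and an inductively defined $\tld r^-\in S^-$, and isolate the single term $\deta a_1\cdot\dy\lambda/\lambda$ (which fits neither $\tB_0\log^2\J$ nor $\tA_0$) as $c_0\log\J$, with the same class count $\dx l_1\in\MA_2$, $\deta[\dxi\lambda/\lambda]\in\tA_{-2}$, $\dy\lambda/(\lambda\log\J)\in\tA_0$ giving $c_0\in\tA_0$. The only (immaterial) discrepancy is a sign convention: with your $(-i)^{|\alpha|}$ expansion the compensating term comes out as the negative of \eqref{c0}, but the paper itself alternates between $i^{|\alpha|}$ and $(-i)^{|\alpha|}$ and nothing downstream depends on this sign.
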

\begin{proof}
We need to show that $q_1=\tld p_1\comp \lambda$ with $\tld p_1$ defined in the Lemma. We express the composition using the calculus of {\PDO} and compare $\tld p_1\comp \lambda$ against $p_1\cdot \lambda$ from Lemma \ref{q1-sym} for terms of order $1$, $0$ and below.\\

In particular, the principal symbol of $\tld p_1\comp \lambda$ is $(b_1\log\J +   a_1)\cdot \lambda$, which coincides with $p_1\cdot \lambda$. For terms of order $0$ (with logarithms), we get
\begin{align}\label{q1-comp-nodegeneracy}
 \left[ b_0' \log^2 \J  +  a_0+ \mathbf{ c_0\log\J}\right]\cdot \lambda + i\sum_{|\beta|=1}\dxie^\beta [b_1\log \J +a_1] \dxy^\beta\lambda
\end{align}
we match these terms with $\left[ b_0\log^2 \J + a_0 \right]\cdot \lambda$ from \eqref{p1}. Note, that $a_0$ remains unchanged and the symbol $b_0'$ gathers all terms, where a derivative $\dx^\alpha \lambda$ is present for $|\alpha|>0$. However, the expression \eqref{q1-comp-nodegeneracy}, contains the term without $\dy^\alpha \lambda$, which is not in the $\tB$ class, yet has a logarithm. We include such terms into a new symbol $c_0$. More precisely, we define $c_0$ by terms we need to absorb:
\begin{align}
  c_0=-i\sum_{|\alpha|=1}\deta^\alpha a_1\frac{\dy^\alpha \lambda}{\lambda\log \J}
\end{align}
Substitution of $a_1$ from \eqref{q1-product} gives the formula \eqref{c0}.\\

Terms of order $0$ in $\tld p_1 \comp \lambda$ are chosen to match with $p_1\cdot \lambda$ by making an appropriate choice of $b_0'$. I.e.
  \begin{align}
    b_0'=b_0-i\sum_{|\beta|=1}\dxie^\beta\left(b_1\log\J\right)\frac{\dxy^\beta\lambda}{\lambda\log^2 \J}-i\sum_{|\beta|=1}\dxi^\beta a_1\frac{\dx^\beta \lambda}{\lambda\log^2 \J}\in \tB^0
   \end{align}
  where $b_j$, $a_j$ are from \eqref{q1-product} in Lemma \ref{q1-sym}. Terms of order less than $0$: $\tld r^{-}$ are treated similarly. We do it explicitly defining inductively as follows:
  \begin{align}
       \tld r^{-}=\sum_{k=1}^{N+s+1} r^{-}_k \in S^-
  \end{align}
  with the first term $r^{-}_1$ defined as follows
    \begin{align}
    r^{-}_1 & =r^{-} - \sum_{j=2}^{N+s+1}\sum_{|\beta|=j}\frac{i^{|\beta|}}{\beta!} \dxie^\beta [b_1\log \J +a_1] \frac{\dxy^\beta\lambda}{\lambda}\\
    & - \sum_{j=1}^{N+s+1}\sum_{|\beta|=j}\frac{i^{|\beta|}}{\beta!} \dxie^\beta [c_0\log \J+b_0'\log^2 \J +a_0] \frac{\dxy^\beta\lambda}{\lambda}
  \end{align}
  and the lower order terms defined inductively for $k\geq 2$ $$r^{-}_k = -\sum_{|\gamma|=1}^{k-1}\dxie^\gamma r^{-}_{k-1} \cdot \frac{\dxy^\gamma \lambda}{\lambda} \in  S^-$$
\end{proof}
\begin{lem}\label{q1-adjoint}
 The operator $\tld p_1$ from Lemma \ref{q1-comp} is anti-self-adjoint to the top order. I.e.
  \begin{align}\label{Q1-symbol-final}
    \tld p_1+\tld p_1^* = b_0'' \log^2 \J  +  a_0'+ \mathbf{c_0'\log\J}+R^{-}
  \end{align}
  where we have explicit expression for every term:
  \begin{align*}
    a_0'=a_0+ \sum_{|\beta|=|\alpha|=1}\dxi^\beta\left[\dx^{\alpha+\beta} l_1  \frac{\dxi^\alpha\lambda}{\lambda}\right]\in \tA_0;\\
    c_0'=c_0+\sum_{|\beta|=|\alpha|=1} \frac{\dx^\alpha l_1}{\log \J}  \deta^\beta\dy^\beta\left[\frac{\dxi^\alpha\lambda}{\lambda}\right];\\
    b_0''=b_0'+\sum_{|\beta|=|\alpha|=1}\dxi^\beta\left[\dx^\alpha l_1  \dx^\beta\frac{\dxi^\alpha\lambda}{\lambda}\right]-i\sum_{|\beta|=1}\overline{\partial_{x,y}^\beta\dxie^\beta b_1}\in \tB_0;
  \end{align*}
 and the remainder $R^{-}\in S^{-}$.
\end{lem}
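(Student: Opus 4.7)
The plan is to invoke the standard adjoint asymptotic expansion
$$
\tld p_1^{\,*} \sim \sum_{|\gamma| \le N+s+1} \frac{(-i)^{|\gamma|}}{\gamma!}\, \dxie^\gamma \dxy^\gamma \overline{\tld p_1}
$$
(modulo $S^{-N}$) applied to the decomposition of $\tld p_1$ from Lemma \ref{q1-comp}, and then to exploit the fact recorded at the end of Lemma \ref{q1-sym} that the principal parts of $b_1$ and $a_1$ are purely imaginary. Consequently the order-$1$ contributions in $\tld p_1 + \overline{\tld p_1}$ cancel, and what survives of the top-order terms together with the first-order adjoint corrections will produce exactly the extra pieces listed in the explicit formulas for $a_0'$, $b_0''$ and $c_0'$.

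I would organize the computation into three passes. First, in $\tld p_1 + \overline{\tld p_1}$ itself: the purely-imaginary-principal property of $b_1, a_1$ eliminates the order-$1$ contribution, while the real parts of the already order-$0$ summands $b_0'\log^2\J$, $a_0$ and $c_0\log\J$ are absorbed (by redefinition) into $b_0''\log^2\J$, $a_0'$ and $c_0'\log\J$. Second, the $|\gamma|=1$ adjoint corrections applied to $\overline{b_1\log\J + a_1}$ split into $(x,\xi)$- and $(y,\eta)$-pieces. Since $l_1$ is independent of $(y,\eta)$, a $(y,\eta)$-derivative falls only on the $\lambda$-ratio $\dxi^\alpha\lambda/\lambda$ inside $\overline{a_1}$, yielding precisely the $\dx^\alpha l_1 \cdot \deta^\beta\dy^\beta[\dxi^\alpha\lambda/\lambda]$ contribution to $c_0'\log\J$. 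An $(x,\xi)$-derivative can fall either on $\dx^\alpha l_1$, producing the $\dxi^\beta[\dx^{\alpha+\beta} l_1 \cdot \dxi^\alpha\lambda/\lambda]$ piece of $a_0'$, or on the $\lambda$-ratio, producing the $\dxi^\beta[\dx^\alpha l_1 \cdot \dx^\beta\dxi^\alpha\lambda/\lambda]$ piece of $b_0''$, or on the $b_1\log\J$ summand, yielding the $-i\sum_{|\beta|=1}\overline{\dxy^\beta \dxie^\beta b_1}$ piece of $b_0''$. Third, all $|\gamma|\ge 2$ corrections acting on $b_1\log\J + a_1$, every $|\gamma|\ge 1$ correction acting on the already order-$0$ symbols $b_0'\log^2\J + a_0 + c_0\log\J$, and the adjoint of $\tld r^{-}$ itself, each drop at least one full order by Lemma \ref{lambda-classes} and are collected into the $R^{-}\in S^{-}$ remainder.

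Class membership then follows from Lemma \ref{lambda-classes}: the ratios $\dxi^\alpha\lambda/\lambda \in \tA_{-|\alpha|}$ multiplied by $\dx^{\alpha+\beta}l_1 \in \MA_2$ and acted on by $\dxi^\beta$ keep the $a_0'$ correction inside $\tA_0$; a $\dx^\beta$-derivative on the $\lambda$-ratio brings the correction into $\tB_0$ via the support property (\ref{lambda-x-2}), placing it into $b_0''$; and the $\deta^\beta\dy^\beta$-derivative in the $c_0'$ correction keeps the same ``$\tA$-up-to-a-$\log\J$'' structure of $c_0$, using (\ref{lambda-y-2}). The main obstacle will be the combinatorial bookkeeping of where each derivative lands across products of $l_1$-derivatives, $\lambda$-ratios and $\delta_2$-dependent cutoffs, and verifying that every tail contribution really drops a full order into $S^{-}$ uniformly in the regularization parameter $\delta_5$; the latter is a routine consequence of Remark \ref{regularization} together with the fact that $\log\J \cdot \jap{(\xi,\eta)}^{-\rho}$ is uniformly bounded for any $\rho>0$.
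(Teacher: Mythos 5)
Your proposal is correct and takes essentially the same route as the paper: apply the adjoint asymptotic expansion to the decomposition from Lemma \ref{q1-comp}, use that the principal parts of $b_1$ and $a_1$ are purely imaginary so the order-one contributions cancel in $\tld p_1+\tld p_1^*$, and sort the surviving $|\gamma|=1$ corrections into $a_0'$, $b_0''$, $c_0'$ (your bookkeeping of where the $(x,\xi)$- versus $(y,\eta)$-derivatives land is in fact more explicit than the paper's, which simply absorbs terms by class), with all lower-order pieces collected in $R^-\in S^-$. The only discrepancy is the $(-i)^{|\gamma|}$ in your adjoint expansion versus the paper's $\frac{i^{|\gamma|}}{\gamma!}\overline{\dxy^\gamma\dxie^\gamma(\cdot)}$ convention from the Appendix, a sign-convention detail that affects neither the top-order cancellation nor the class sorting.
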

\begin{proof}
  By the Lemma \ref{q1-comp}, $b_1$ and $a_1$ are purely imaginary to the top order. Hence from the calculus of {\PDO} the operator $\tld p_1$ is anti-self adjoint at the top order. More explicitly,
  \begin{align*}
    \tld p_1^* = \sum_{|\beta|=0}^{N+1}\frac{i^{|\beta|}}{\beta!} \overline{\partial_{x,y}^\beta\dxie^\beta \tld p_1};
  \end{align*}
  All terms of negative order are absorbed into $R^{-}$. Terms with symbols of the form $\log^2 \J\mathcal{\tld B}_{0}$ are absorbed into $b_0''$. Symbols in $ \mathcal{\tld A}_0$ are absorbed into $a_0'$. Finally, terms that belong to neither $ \mathcal{\tld B}_0$ nor $\mathcal{\tld A}_0$ are explictly added to $c_0'$.
\end{proof}
\subsection{Bounds on Q1}
The Lemmas in the previous subsections allow us to rewrite the commutator $Q_1$ into a sum of $a$, $b$ and $c$ terms, up to a remainder. The $a$ terms, at least for $Q_1$, do not have logarithmic derivatives; $b$ terms have additional logs, but this is redeemed  by the support of the symbol. Finally, $c$ terms have all the flaws, but their redeeming feature is that they preserve a substantial part of the original operator. More precisely from Lemma \ref{q1-comp}
\begin{align}\label{q1-operator}
  Q_1 = \tld P_1 \Lambda + R_{-N}
\end{align}
Whereas \eqref{Q1-symbol-final} in Lemma \ref{q1-adjoint} together with Lemma \ref{real_parts} from the Appendix imply
\begin{align}\label{Q1-adjoint}
  Re(\tld P_1 v, v)\le Re\left( (b_0^{''}\log^2\J +a_0' + c_0'\log \J) v,v\right) + \norm{R^{-} v}\norm{v}
\end{align}
Combining these two estimates for $v=\Lambda u$ reduces the first half of the Proposition \ref{prop:commutator} to bounds on symbols $a_0$--$c_0$ and the remainder. Half of them were already done in Lemmas \ref{bound-sym} and \ref{bound-b-}. The term $c_0\log \J$ was not treated in section \ref{symb_classes} and requires a different analysis. Meanwhile $b_0^{''}\log^2\J$ follows the ideas of Lemma \ref{bound-b-toy} with a bit more of {\PDO} calculus. We establish these facts here, starting with an argument for $c_0\log\J$ term. This argument, requires the following Lemma.
\begin{lem}[Oleinik-Radkevich]\label{lem:oleinik}
  Let $K$ be a compact set and $L_1$ a non-negative elliptic operator with smooth coefficients of the form \eqref{elliptic} with symbol $l_1$. Then there exists a constant $C_K$, such that for all $v\in C^\infty_0(K)$ the following estimate holds
 \begin{align*}
   \norm{(\dx^\beta l_1) v}_{H^{-1}}^2 \le C_K Re\left((L_1 v,v) + \norm{v}^2\right)
 \end{align*}
\end{lem}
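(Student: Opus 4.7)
The estimate is the classical Oleinik--Radkevich inequality. My plan combines the pointwise Glaeser inequality with the Fefferman--Phong operator inequality. I focus on $|\beta|=1$, the case actually needed for $c_0$ in \eqref{c0}; the case $|\beta|\ge 2$ follows similarly, using $Re\,l_1(x,\xi) + C_K \ge 0$ (guaranteed by Fefferman--Phong itself) together with the trivial bound that $\partial^\beta l_1$ has $\xi$-degree at most $2$ with bounded coefficients on $K$.

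For the pointwise symbol bound, I would apply the one-dimensional Glaeser inequality $|F'|^2 \le 2 \|F''\|_\infty F$ to the nonnegative $C^2$ function $t \mapsto \sum_{j,k} a_{jk}(x+t e_l)\xi_j \xi_k$, whose Hessian in $t$ is controlled by $C_K|\xi|^2$ on $K$. This yields $|\sum (\partial_l a_{jk})(x)\xi_j\xi_k|^2 \le C_K(1+|\xi|^2)\sum a_{jk}(x)\xi_j\xi_k$, and after absorbing the contributions of the lower-order coefficients $\alpha_j$, $\alpha_0$ of $l_1$ in \eqref{l1} (whose moduli are controlled by $C_K(1+|\xi|^2)^{1/2}$), I obtain the symbol inequality
\begin{equation*}
|\partial_l l_1(x,\xi)|^2 \le C_K\,(1+|\xi|^2)\,(Re\,l_1(x,\xi) + C_K), \quad x \in K.
\end{equation*}

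Next, by duality and the composition calculus, with $P = (\partial_l l_1)(x,D) \in S^2$,
\begin{equation*}
\norm{Pv}_{H^{-1}}^2 = (P^*(1-\Delta)^{-1}P v, v),
\end{equation*}
and $P^*(1-\Delta)^{-1}P$ is a {\PDO} of order $2$ with principal symbol $|\partial_l l_1|^2/(1+|\xi|^2)$. The symbol
\begin{equation*}
q(x,\xi) := C_K\,Re\,l_1(x,\xi) + C_K - \frac{|\partial_l l_1(x,\xi)|^2}{1+|\xi|^2} \in S^2
\end{equation*}
is pointwise nonnegative on $K\times\R^n$ by the preceding step. The Fefferman--Phong inequality then gives $(q(x,D)v, v) \ge -C\norm{v}^2$ for $v \in C_0^\infty(K)$, and combining with the identity $(Re\,l_1(x,D)v, v) = Re(L_1 v, v) + O(\norm{v}^2)$ from the symbolic calculus yields the claim.

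The delicate part will be the bookkeeping of subprincipal corrections from the composition $P^*(1-\Delta)^{-1}P$ and from the translation between $l_1(x,D)$ and $L_1$. These corrections live in symbol classes of order at most $1$ and can be absorbed into the $C_K\norm{v}^2$ remainder via Cauchy--Schwarz and the interpolation Lemma \ref{interpolation}; the Fefferman--Phong step itself requires a genuinely nonnegative principal symbol, so I must verify that these $S^{\le 1}$ corrections do not spoil the nonnegativity of $q$ at the principal level, which is ensured by the strict pointwise Glaeser bound above.
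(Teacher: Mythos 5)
The first thing to note is that the paper does not prove this lemma at all: its ``proof'' is a citation of equation (1.2) of \cite{Mor87} and of \cite{Ole-Rad73}, so any self-contained argument is by default a different route. Your skeleton is a recognizable one, and its first half is sound: since $\sum a_{jk}(x)\eta_j\eta_k\ge 0$ for \emph{all} $x$ and the coefficients can be assumed to have globally bounded derivatives (Remark \ref{bounded-coeff}), the Glaeser inequality does give the pointwise bound $|\partial_{x_l} l_1(x,\xi)|^2\le C\,\jap{\xi}^2\left(\mathrm{Re}\, l_1(x,\xi)+C\right)$, and in fact globally in $x$ --- which you need anyway, because Fefferman--Phong requires nonnegativity of the symbol for all $x$, not just on $K\times\R^n$ as you state.

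The genuine gap is exactly the step you defer to ``bookkeeping.'' Writing $\norm{Pv}_{H^{-1}}^2=(P^*\jap{D}^{-2}Pv,v)$, the operator $P^*\jap{D}^{-2}P$ differs from the quantization of $|\partial_{x_l}l_1|^2\jap{\xi}^{-2}$ by terms of order $1$ (e.g.\ symbols like $\partial_{\xi}\bigl(\bar p\,\jap{\xi}^{-2}\bigr)\partial_{x}p$ with $p=\partial_{x_l}l_1$), and their real parts are not purely imaginary-principal, so Lemma \ref{real_parts} does not make them order $0$. For a generic first-order operator $R$ one only gets $|(Rv,v)|\le C\norm{v}_{H^{1/2}}^2$, and neither Cauchy--Schwarz nor Lemma \ref{interpolation} converts $\norm{v}_{H^{1/2}}^2$ into $\eps\,\mathrm{Re}(L_1v,v)+C_\eps\norm{v}^2$: that would be a subelliptic estimate for $L_1$, which is precisely what fails for the degenerate operators considered here (if it held, the whole paper would be unnecessary). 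So these corrections cannot be ``absorbed into the $C_K\norm{v}^2$ remainder'' as claimed; to close the argument you must use the specific structure of the order-one errors (further Glaeser-type pointwise bounds showing that the \emph{full} real symbol of $C_K\bigl(\tfrac{L_1+L_1^*}{2}+1\bigr)-P^*\jap{D}^{-2}P$ is nonnegative modulo $S^0$), or abandon the reduction and run the original Oleinik--Radkevich integration-by-parts argument, which needs no Fefferman--Phong at all. Finally, your remark that $|\beta|\ge 2$ ``follows similarly'' from boundedness of $\partial^\beta l_1$ is wrong for the same reason (it only yields $\norm{(\dx^\beta l_1)v}_{H^{-1}}\le C\norm{v}_{H^1}$), and in fact the stated inequality is false for $|\beta|=2$: take $n=1$, $l_1=x^2\xi^2$, so $\partial_x^2 l_1=2\xi^2$, and $v_N(x)=\phi(Nx)$; then the left side grows like $N$ while $\mathrm{Re}(L_1v_N,v_N)+\norm{v_N}^2=\int x^2|v_N'|^2\sim N^{-1}$. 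The lemma should be read --- and is only used in Lemma \ref{lem:Q1+log} --- with $|\beta|=1$.
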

\begin{proof}
This estimate is stated as equation (1.2) on p.3 of \cite{Mor87}, with reference to the proof in  \cite{Ole-Rad73}.
\end{proof}
\begin{lem}\label{lem:Q1+log}
  Let $c_0'$ be from \eqref{Q1-symbol-final}. Then the following estimate holds
  \begin{align}\label{c_0_bound}
    Re(c_0'\log\J \Lambda u,\Lambda u) \le \frac{1}{2}Re\left( L\Lambda u, \Lambda u\right) + C\norm{\Lambda u}^2 + C_{\delta_2}\norm{\Lambda u}_{H^{-1/2}}^2+C\norm{u}_{H^{-N}}^2
  \end{align}
\end{lem}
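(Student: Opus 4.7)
The structure of $c_0'$ given in \eqref{Q1-symbol-final} shows that every summand contains a factor of $\dx^\alpha l_1$ with $|\alpha|=1$, i.e.\ a second-order symbol that cannot be controlled by generic boundedness of {\PDO}. The plan is therefore to factor out $\dx^\alpha l_1$ from the symbol $c_0'\log\J$, transfer it onto one copy of $\Lambda u$ via an adjoint, and then invoke the Oleinik-Radkevich estimate of Lemma \ref{lem:oleinik} to bound it by $Re(L_1\Lambda u,\Lambda u)$.

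Collecting the two summands defining $c_0'$, one obtains a clean product decomposition
\[
c_0'\log\J \;=\; \sum_{|\alpha|=|\beta|=1}(\dx^\alpha l_1)\cdot m_{\alpha,\beta},
\]
where $m_{\alpha,\beta}$ gathers the remaining factors coming from $\lambda$. By Lemma \ref{lambda-classes} each $\dxi^\alpha\lambda/\lambda$ lies in $\tA_{-|\alpha|}$ while $\dy^\beta\lambda/\lambda$ contributes at most one power of $\log\J$; a product-rule computation then gives $m_{\alpha,\beta}\in \log\J\cdot\tA_{-2}$, a symbol of order $-2+\eps$ for any $\eps>0$. Converting this product of symbols into composition of operators by {\PDO} calculus (the correction terms $\dxie^\gamma(\dx^\alpha l_1)\cdot\dxy^\gamma m_{\alpha,\beta}$ with $|\gamma|\ge 1$ are of strictly negative order) yields
\[
c_0'\log\J \;=\; \sum_{|\alpha|=|\beta|=1}(\dx^\alpha l_1)\circ m_{\alpha,\beta} \;+\; R^-,\qquad R^-\in S^-.
\]

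Pairing with $\Lambda u$ and using that $\dx^\alpha l_1$ has real coefficients (so its adjoint differs only by a symbol of order $\le 1$), I write
\[
Re(c_0'\log\J\,\Lambda u,\Lambda u) \;=\; \sum Re\bigl(m_{\alpha,\beta}\Lambda u,\,(\dx^\alpha l_1)^*\Lambda u\bigr) + Re(R^-\Lambda u,\Lambda u).
\]
Cauchy-Schwarz in the $H^1$--$H^{-1}$ duality bounds the main terms by $\norm{m_{\alpha,\beta}\Lambda u}_{H^1}\cdot\norm{(\dx^\alpha l_1)^*\Lambda u}_{H^{-1}}$. Applying Lemma \ref{lem:oleinik} fiber-wise in $y$ and combining with Plancherel in $y$ together with the embedding $L^2_y H^{-1}_x\hookrightarrow H^{-1}(\R^{n+m})$ gives
\[
\norm{(\dx^\alpha l_1)^*\Lambda u}_{H^{-1}}^2 \;\le\; C\,Re(L_1\Lambda u,\Lambda u) + C\norm{\Lambda u}^2 \;\le\; C\,Re(L\Lambda u,\Lambda u) + C\norm{\Lambda u}^2,
\]
where the final step uses nonnegativity of $gL_2$ via Lemma \ref{pos}. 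For the other factor, since $m_{\alpha,\beta}\in \log\J\cdot\tA_{-2}$ and $\log\J\lesssim \jap{(\xi,\eta)}^{1/2}$, boundedness of {\PDO} yields $\norm{m_{\alpha,\beta}\Lambda u}_{H^1}\le C_{\delta_2}\norm{\Lambda u}_{H^{-1/2}}$, with the $\delta_2$-dependence entering through derivatives of $\phi$ inside $\lambda$.

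Combining the two factor-estimates by AM-GM with the free parameter chosen so that the coefficient of $Re(L\Lambda u,\Lambda u)$ becomes exactly $\frac{1}{2}$ produces the first three terms of \eqref{c_0_bound}. The remainder $R^-\in S^-$ is handled by Lemma \ref{bound-b-} after pulling $\Lambda\in S^s$ through, and contributes the $\norm{u}_{H^{-N}}^2$ term. The main obstacle I expect is the bookkeeping: verifying cleanly that $m_{\alpha,\beta}$ really lies in $\log\J\cdot\tA_{-2}$ for every subterm of $c_0'$, and confirming that Oleinik-Radkevich transfers from its one-variable form to the full $(x,y)$-space while preserving $H^{-1}$ on the left side---this is the only place where a genuine derivative estimate, rather than symbolic calculus, is used.
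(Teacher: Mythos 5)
Your overall strategy is the paper's: you factor $c_0'\log\J=\sum_{|\alpha|=1}(\dx^\alpha l_1)\cdot m_{\alpha}$ with $m_\alpha$ of order $-2$ up to a logarithm (the paper writes $c_0'=\sum\dx^\alpha l_1\cdot a_{-2,\alpha}$, $a_{-2,\alpha}\in\tA_{-2}$), convert the product to a composition modulo negative order, invoke the Oleinik--Radkevich estimate of Lemma \ref{lem:oleinik} to trade $\dx^\alpha l_1$ for $Re(L_1\cdot,\cdot)$, and then use Lemma \ref{pos} to upgrade $L_1$ to $L$; the order $-2$ factor absorbing the logarithm is exactly where the $C_{\delta_2}\norm{\Lambda u}_{H^{-1/2}}^2$ term comes from in both arguments. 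The only organizational difference is cosmetic: you transfer $\dx^\alpha l_1$ onto the second copy of $\Lambda u$ via its adjoint and an $H^{1}$--$H^{-1}$ duality plus AM--GM, whereas the paper keeps $\dx^\alpha l_1$ inside, bounds $\|c_0'\log\J\,\Lambda u\|$ directly, and uses a single Cauchy--Schwarz $Re(Tv,v)\le\eps\|Tv\|^2+C_\eps\|v\|^2$ with $\eps=1/(2C)$. That variation is fine (your remark that $(\dx^\alpha l_1)^*-\dx^\alpha l_1$ is of order $\le 1$, hence harmless in $H^{-1}$, takes care of the adjoint).

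The genuine gap is the unlocalized application of Lemma \ref{lem:oleinik}. That lemma is stated for $v\in C^\infty_0(K)$, and $\Lambda u$ is not compactly supported, whether you apply the estimate fiber-wise in $y$ or globally; "fiber-wise" does not help because $\Lambda u(\cdot,y)$ is still not compactly supported in $x$, and the constant $C_K$ depends on the support. The paper spends a nontrivial step on exactly this point: it introduces cutoffs $\psi_1\Subset\psi_2$ with $\psi_1\equiv 1$ on $\supp u$, splits $\Lambda u=\psi_2\Lambda u+(1-\psi_2)\Lambda u$, uses pseudolocality ($(1-\psi_2)\Lambda\psi_1\in S^{-\infty}$ and $[L_1,\psi_2]\Lambda\psi_1\in S^{-\infty}$) to control the far part and the commutator by $C\norm{u}_{H^{-N}}^2$, applies Oleinik--Radkevich to $\psi_2\Lambda u$, and only then uses nonnegativity of $(1-\psi_2^2)L_1+gL_2$ to replace $\psi_2^2L_1$ by $L$. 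In your write-up this bookkeeping is absent, and in particular part of the $\norm{u}_{H^{-N}}^2$ error in \eqref{c_0_bound} originates precisely there, not only from the $S^-$ remainder as you indicate. The repair is exactly the paper's localization argument, so the gap is fixable, but as written the key inequality $\norm{(\dx^\alpha l_1)^*\Lambda u}_{H^{-1}}^2\le C\,Re(L_1\Lambda u,\Lambda u)+C\norm{\Lambda u}^2$ does not follow from the cited lemma.
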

\hide{Sketch:
\subsection*{Estimate of $\tld c_0$}
The term $\mathbf{c_0\log\J}$
is emphasized for a reason as it was not accounted for in \eqref{Commutator}!
Want to estimate it with
\begin{align}
   (\mathbf{c_0'\log\J} \comp \Lambda u,\Lambda u) \le \frac{1}{2}(L \Lambda u, \Lambda u) + \frac{1}{2} \norm{\Lambda u}^2 + C\norm{\Lambda u}_{-\frac{1}{2}}^2
\end{align}

I.e. although this term looks unaccounted, it can be ``hidden" in \eqref{Commutator}.\\

This is done from the more explicit form of $\tld c_0$ and Olejnik-Radkevich formula for commutators.\\
}

\begin{proof}
From \eqref{Q1-symbol-final} and definition of $c'_0$ in the Lemma \ref{q1-adjoint}, we can express this symbol as
\begin{align*}
  c_0' & =\sum_{|\beta|=|\alpha|=1}\frac{\dx^\alpha l_1}{\log \J}\left(  \deta^\beta\dy^\beta\left[\frac{\dxi^\alpha\lambda}{\lambda}\right]+\deta^\beta \left[ \frac{\dxi^\alpha\lambda}{\lambda}\right] \frac{\dy^\beta \lambda}{\lambda}\right);\\
  & = \sum_{|\alpha|=1}\dx^\alpha(l_1) \cdot a_{-2,\alpha}\xyx
\end{align*}
for $a_{-2,\alpha} \in \tA_{-2}$. Since $l_1\in \mathcal{A}_2$, the composition is of order $0$ (with good constant). The obstacle is an extra logarithm that has to be treated differently.\\

First, observe that by the calculus of {\PDO}
\begin{align*}
  c_0'\log \J = \sum_{|\alpha|=1} a_{-2,\alpha}\log \J \comp \dx^\alpha l_1 +r_{-\frac{1}{2}},\quad\text{where}\    r_{-\frac{1}{2}}\in S^{-\frac{1}{2}}
\end{align*}
Second, we apply the boundedness of {\PDO} for the second term above and Lemma \ref{bound-sym} for the first to obtain
\begin{align}\label{c0_bound}
  \| c_0'\log \J v \| &\le C\sum_{|\alpha|=1}\norm{\log \J \dx^\alpha l_1 \cdot v}_{H^{-2}} + C_{\delta_2} \norm{v}_{H^{-\frac{1}{2}}}\nonumber\\
  &\le C\sum_{|\alpha|=1}\norm{\dx^\alpha l_1 \cdot v}_{H^{-1}} + C_{\delta_2} \norm{v}_{H^{-\frac{1}{2}}}.
\end{align}
Third, let $\psi_1\Subset\psi_2\in C^\infty_0$ with $\psi_1\equiv 1$ on $\supp u$. We split
\begin{align*}
  v=\Lambda u = \psi_2\Lambda u + (1-\psi_2)\Lambda u
\end{align*}
Note, that by pseudo-locality of {\PDO} $(1-\psi_2)\Lambda \psi_1 \in S^{-\infty}$ and similarly for $\dx^\beta\psi_2 \Lambda \psi_1 \in S^{-\infty}$ for $|\beta|>0$.\\

We now apply Lemma \ref{lem:oleinik} to $\psi_2\Lambda u \in H^3$ (with $\delta_5$ dependent bounds) and compact support by the $\psi_2$ localization. That Lemma implies
\begin{align}\label{oleinik-supported}
  \norm{\dx^\alpha l_1 \psi_2\Lambda u}_{H^{-1}} \le C\left( Re(L_1\psi_2\Lambda u ,\psi_2\Lambda u )+\norm{\psi_2\Lambda u}_{L^2}\right)
\end{align}
We now commute $L_1$ and $\psi_2$ observing that $[L_1,\psi_2] \Lambda \psi_1\in S^{-\infty}$:
\begin{align*}
  (L_1\psi_2\Lambda u ,\psi_2\Lambda u )&= (\psi_2^2 L_1\Lambda u ,\Lambda u )+(\psi_2[L_1,\psi_2] L\Lambda u ,\Lambda u )\\
  Re(L_1\psi_2\Lambda u ,\psi_2\Lambda u ) &\le Re(\psi_2^2 L_1\Lambda u ,\Lambda u ) + C\norm{u}_{H^{-N}}^2
\end{align*}
Similarly, by $S^{-\infty}$ property,
\begin{align*}
  \norm{\dx^\alpha l_1 (1-\psi_2)\Lambda u}_{H^{-1}}\le C\norm{u}_{H^{-N}}^2
\end{align*}
Combining the last three estimates with \eqref{c0_bound}, we conclude
\begin{align}\label{c_0_bound2}
   \| c_0'\log \J v \| &\le C\ Re(\psi_2^2 L_1\Lambda u ,\Lambda u )+ C\norm{u}_{H^{-N}}^2 +C\norm{\psi_2\Lambda u}_{L^2} + C_{\delta_2} \norm{\Lambda u}_{H^{-\frac{1}{2}}}.
\end{align}
Fourth, as $(1-\psi_2)L_1+g(x)L_2$ is a non-negative operator, we can use Lemma \ref{pos} to replace $\psi_2^2 L_1$ with $L$ as follows
$$Re(\psi^2_2L_1 v,v ) \le Re(L v, v) + C\norm{v}^2$$
We now combine this positivity estimate, with \eqref{c_0_bound2} and Cauchy-Schwartz inequality to obtain:
\begin{align*}
   & Re(c_0'\log\J \comp \Lambda u,\Lambda u)
   \le \eps \| c_0'\log\J  \Lambda u\|^2 + C_\eps\|\Lambda u\|^2\\
   &\le \eps C Re(Lu,u) + C_\eps\|\Lambda u\|^2 + (\eps C_{\delta_2}+C_\eps)\|\Lambda u\|_{H^{-\frac{1}{2}}} + C\norm{u}_{H^{-N}}^2
\end{align*}
Now a choice of $\eps=\frac{1}{2C}$ completes the proof.
\end{proof}

The next lemma gives estimates for the term $b_{0}'' \log^2 \J+a_{0}' $.
\begin{lem}\label{lem:a_0}
Let $a_0'$ be as in \eqref{Q1-symbol-final}. Then
  \begin{align}\label{Q1-a0}
    Re(a_0'\Lambda u,\Lambda u) \le C\norm{\Lambda u}^2
  \end{align}
  \end{lem}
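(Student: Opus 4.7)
The plan is to exploit the class membership $a_0'\in\tA_0$, which was already recorded in the formula \eqref{Q1-symbol-final} of Lemma~\ref{q1-adjoint}, and then to apply the uniform $L^2$-boundedness for $\tA_0$-symbols proved in Lemma~\ref{bound-sym}. Once this boundedness is invoked, the conclusion is a one-line Cauchy-Schwarz. There is no serious obstacle: this lemma is essentially the payoff of the careful class bookkeeping set up in Section~\ref{symbol classes}, and the only thing requiring attention is that the final constant must be independent of $\delta_2$.

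First I would double-check the membership $a_0'\in\tA_0$ directly from its explicit form
$$a_0' = a_0 + \sum_{|\alpha|=|\beta|=1}\dxi^\beta\!\left[\dx^{\alpha+\beta} l_1 \,\frac{\dxi^\alpha\lambda}{\lambda}\right].$$
The piece $a_0$ already sits in $\tA_0$ by \eqref{q1-product}. For each new summand, $\dx^{\alpha+\beta} l_1$ lies in $\mathcal{A}_2$ by Lemma~\ref{Lemma-L1}, while $\dxi^\alpha\lambda/\lambda\in\tA_{-1}$ by Lemma~\ref{lambda-classes}; their product is in $\tA_1$, and applying the outer $\dxi^\beta$ with $|\beta|=1$ drops the order by one, landing in $\tA_0$.

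Next I would apply Lemma~\ref{bound-sym} with $j=0$ to $v=\Lambda u$, which produces a constant $C$, crucially $\delta_2$-independent, for which
$$\norm{a_0'\Lambda u}_{L^2}\le C\norm{\Lambda u}_{L^2}.$$
A final Cauchy-Schwarz step on the $L^2$ pairing then gives
$$Re(a_0'\Lambda u,\Lambda u)\le |(a_0'\Lambda u,\Lambda u)|\le \norm{a_0'\Lambda u}_{L^2}\norm{\Lambda u}_{L^2}\le C\norm{\Lambda u}^2,$$
which is the asserted bound. The point to watch is that $C$ stays bounded as $\delta_2\to 0$, since otherwise the constant would later conflict with the absorption scheme feeding Proposition~\ref{prop:commutator}; this uniformity is precisely what is built into the definition \eqref{MA} of $\tA_0$, which is exactly why the class was isolated in Section~\ref{symb_classes}.
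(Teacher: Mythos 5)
Your proposal is correct and follows the same route as the paper, which proves the lemma by citing Lemma \ref{bound-sym} with $j=0$; your additional verification that $a_0'\in\tA_0$ merely re-derives what Lemma \ref{q1-adjoint} already asserts in \eqref{Q1-symbol-final}, and the concluding Cauchy--Schwarz step is the implicit final line of the paper's one-sentence proof. Nothing further is needed.
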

\begin{proof}
The result follows immediately from Lemma \ref{bound-sym} by taking $j=0$.
\end{proof}
\begin{lem}\label{Q1-b0}
Let $b_0''$ be as in \eqref{Q1-symbol-final}, and let $\tld \psi\in \tld O$ with $\tld \psi\equiv 1$ on $\supp b_0''$ as in Lemma \ref{lem:lambda-hs}. Then
  $$Re( b_0''\log^2\J \Lambda u, \Lambda u) \le C_{\delta_2} \norm{\log \J \tld \psi \Lambda u}^2 + C_{\delta_2}\norm{\Lambda u}_{H^{-1/2}}^2.$$
\end{lem}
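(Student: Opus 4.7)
The strategy is analogous to Lemma \ref{bound-b-toy}, enhanced to carry the extra $\log^2\J$ weight. Since $b_0'' \in \tld{\mathcal{B}}_0$ and the hypothesis gives $\tld{\psi} \equiv 1$ on a neighborhood of $\supp b_0''$, the plan is to rewrite $\text{Op}(b_0'' \log^2\J)$ as a symmetric sandwich of the form $\text{Op}(\log\J \tld{\psi}) \circ \text{Op}(b_0'') \circ \text{Op}(\log\J \tld{\psi})$ modulo a remainder in $S^{-1/2}$, and then apply the $L^2$-boundedness of $\text{Op}(b_0'')$ to extract the main estimate.

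The key algebraic fact is that $\tld{\psi}^2 \cdot b_0'' = b_0''$ as symbols, which allows a $\tld{\psi}$ factor to be inserted on each side. To build the sandwich I would combine this identity with the composition calculus of pseudodifferential operators, using two structural features: first, since $\log\J$ is $(x,y)$-independent, $\text{Op}(a) \circ \text{Op}(\log\J) = \text{Op}(a\log\J)$ \emph{exactly}; second, every $(\xi,\eta)$-derivative of $\log\J$ drops one order (treating $\log\J \in S^{\varepsilon}$ for small $\varepsilon>0$). All composition errors, along with the commutator $[\log\J, \tld{\psi}] \in S^{-1+\varepsilon}$ and the adjoint correction $\text{Op}(\log\J \tld{\psi})^* - \text{Op}(\log\J \tld{\psi})$, land in $S^{-1/2}$ and are absorbed into a single remainder operator $R$.

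With the sandwich in hand, pairing against $\Lambda u$ on both sides yields
\[
Re( b_0''\log^2\J \Lambda u, \Lambda u) = Re(\text{Op}(b_0'') \log\J \tld{\psi} \Lambda u, \log\J \tld{\psi} \Lambda u) + Re(R \Lambda u, \Lambda u).
\]
The main term is bounded by $\|\text{Op}(b_0'')\|_{L^2\to L^2} \, \|\log\J \tld{\psi} \Lambda u\|^2 \le C_{\delta_2}\|\log\J \tld{\psi} \Lambda u\|^2$; the $\delta_2$ in the constant reflects that $S^0$-seminorms of $b_0''$ depend on $\delta_2$ through the cutoff factor from the definition of $\tld{\mathcal{B}}_0$. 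The remainder is controlled by Cauchy-Schwarz and the mapping $R\colon H^{-1/2}\to L^2$, giving $|Re(R\Lambda u,\Lambda u)| \le C_{\delta_2}\|\Lambda u\|_{H^{-1/2}}^2$ (using a mild interpolation step in the spirit of Lemma \ref{interpolation} to pair the $L^2$ factor with the $H^{-1/2}$ factor appropriately).

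The main obstacle is justifying the insertion of the $\tld{\psi}$ factors with a remainder in $S^{-1/2}$: the critical step is to verify that compositions such as $\text{Op}(b_0'') \circ (1-\tld{\psi})$ are smoothing. This follows from the support argument: since $\tld{\psi} \equiv 1$ in an open neighborhood of $\supp b_0''$, one has $D_{x,y}^\alpha(1-\tld{\psi}) \equiv 0$ on $\supp b_0''$, so every term in the asymptotic expansion of the composition vanishes identically, leaving an operator in $S^{-\infty}$. Beyond this support bookkeeping, the proof is routine $\Psi$DO calculus.
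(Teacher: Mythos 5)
Your argument is correct and follows essentially the same route as the paper's proof: the symmetric sandwich $b_0''\log^2\J=\tld\psi\log\J\comp b_0''\comp\log\J\tld\psi$ modulo a lower-order remainder, justified by $\tld\psi^2 b_0''=b_0''$ and the support properties, followed by $L^2$-boundedness of $b_0''$ with a $\delta_2$-dependent constant and absorption of the remainder into the $\norm{\Lambda u}_{H^{-1/2}}^2$ term. You in fact spell out the composition, commutator, and adjoint corrections more explicitly than the paper does, so no gap.
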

The argument is essentially a slight modification of \eqref{tMB-bound}. We present the full justification below.
\begin{proof}
Note, that from the definition of $\lambda$, \eqref{lambda} and symbol classes $\tld B$ in \eqref{tMB}, $ b_0''$ is compactly supported in the physical space $(x,y)$. Therefore, there exists $\tld \psi\equiv 1$ on its support. Moreover, from the definition of $\tld B$, we may ensure that $\tld \psi \in \tO$.\\

 We claim, that $b_0''$ can be rewritten as follows with the help of the  {\PDO} calculus
\begin{align}\label{b-split}
  b_0''\log^2 \J =\tld\psi^{2}b_0''\log^2\J= \tld\psi \log\J\comp b_0'' \comp\log\J \tld \psi \mod S^{-\frac{1}{2}}
\end{align}
Indeed, on the symbol level, the two sides agree perfectly, because of the support properties of $b_0''$.\\

Applying \eqref{b-split} we get
\begin{align}
 Re(\tld b_0''\log^2 \J \Lambda u,\Lambda u)=Re(\tld b_0'' \comp\log\J \tld \psi\Lambda  u,\log \J \tld \psi \Lambda u)\\
 \le C_{\delta_2} \norm{\log\J \tld \psi \Lambda u}^2
\end{align}
 which completes the proof
\end{proof}
We now return to \eqref{q1-operator} and \eqref{Q1-adjoint} to conclude the $Q_1$ estimates.
\begin{coro}
The first estimate in the Proposition \ref{prop:commutator} holds. I.e.
\begin{align}\label{re_q1_est}
  Re(Q_1 u,\Lambda u) \le C\norm{\Lambda u}^2 + \frac{1}{2} Re(L\Lambda u, \Lambda u) + C_{\delta_2} \norm{\log \J \tld \phi \Lambda u}^2 
  + C_{\delta_2} \norm{u}_{H^{-N}}
\end{align}
\end{coro}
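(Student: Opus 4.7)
The plan is to assemble the estimate \eqref{re_q1_est} from the pieces already laid out. First, by Lemma~\ref{q1-comp} we have the operator identity $Q_1 = \tld P_1 \Lambda + R_{-N}$ with $R_{-N}\in S^{-N}$, and by \eqref{Q1-adjoint} (coming from Lemma~\ref{q1-adjoint} together with the anti-self-adjointness at the top order) the pairing $\Re(\tld P_1 v, v)$ for $v=\Lambda u$ is controlled by the pairing of $v$ with the operator of symbol $b_0'' \log^2\J + a_0' + c_0'\log\J$ plus a remainder of type $\norm{R^- v}\norm{v}$. So the whole task reduces to bounding these four contributions separately.

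Next, I would invoke the three pointwise bounds from the preceding lemmas, exactly matching the three principal terms. The $a_0'$ term is absorbed by Lemma~\ref{lem:a_0} into $C\norm{\Lambda u}^2$. The $b_0''\log^2\J$ term is absorbed by Lemma~\ref{Q1-b0} into a combination $C_{\delta_2}\norm{\log\J\tld\phi\Lambda u}^2+C_{\delta_2}\norm{\Lambda u}_{H^{-1/2}}^2$, where the cutoff $\tld\phi$ is chosen in $\tO$ with $\tld\phi\equiv 1$ on $\supp b_0''$, which is possible precisely because $b_0''\in\tB_0$. The $c_0'\log\J$ term—the delicate one—is handled by Lemma~\ref{lem:Q1+log}, producing the pivotal $\tfrac{1}{2}\Re(L\Lambda u,\Lambda u)$ contribution together with $C\norm{\Lambda u}^2+C_{\delta_2}\norm{\Lambda u}_{H^{-1/2}}^2+C\norm{u}_{H^{-N}}^2$. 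Summing these three estimates together accounts for all the positive-order pieces of $\tld P_1$.

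It remains only to tidy up the lower-order debris: the $R^-$ remainder inherited from \eqref{Q1-adjoint} is bounded via Lemma~\ref{bound-b-} by $\eps\norm{\Lambda u}+C_{\eps,\delta_2}\norm{\Lambda u}_{H^{-N-s}}$, which after Cauchy--Schwarz contributes to $C\norm{\Lambda u}^2+C_{\delta_2}\norm{u}_{H^{-N}}^2$ once one uses $\Lambda\in S^s$ to convert $\norm{\Lambda u}_{H^{-N-s}}$ into $\norm{u}_{H^{-N}}$. The operator remainder $R_{-N}$ from Lemma~\ref{q1-comp} contributes $\norm{R_{-N}u}\norm{\Lambda u}\le C\norm{u}_{H^{-N}}^2+\norm{\Lambda u}^2$. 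Finally, the intermediate $H^{-1/2}$-norms of $\Lambda u$ produced along the way are interpolated via Lemma~\ref{interpolation} (with $a(\xi,\eta)=\J^{1/2}$) against $\norm{\Lambda u}$ and $\norm{u}_{H^{-N}}$, so they can be absorbed into the $C\norm{\Lambda u}^2+C_{\delta_2}\norm{u}_{H^{-N}}^2$ terms with a mild readjustment of constants.

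The only step that really requires thought is the $c_0'\log\J$ contribution, because the extra logarithm is not available from the symbol-class bookkeeping; that step is what forces the Oleinik--Radkevich input in Lemma~\ref{lem:Q1+log} and is the one that couples the estimate back to $\Re(L\Lambda u,\Lambda u)$ with the crucial factor $\tfrac12$. Once that is in hand, the corollary follows by collecting constants and noting that any cutoff $\tld\phi\in\tO$ with $\tld\phi\equiv 1$ on $\supp b_0''$ works uniformly for the logarithmic term, matching the statement of Proposition~\ref{prop:commutator}.
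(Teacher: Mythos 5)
Your proposal is correct and follows essentially the same route as the paper: combine \eqref{q1-operator} with \eqref{Q1-adjoint} for $v=\Lambda u$, then dispatch the four pieces via Lemma \ref{lem:a_0} (for $a_0'$), Lemma \ref{Q1-b0} (for $b_0''\log^2\J$), Lemma \ref{lem:Q1+log} (for $c_0'\log\J$), and Lemma \ref{bound-b-} (for the $S^-$ remainder), absorbing the $H^{-1/2}$ norms by interpolation. Your extra bookkeeping of the $R_{-N}$ term and the use of Lemma \ref{interpolation} rather than routing the interpolation through Lemma \ref{bound-b-} are only cosmetic differences from the paper's argument.
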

\begin{proof}
 Combining \eqref{q1-operator} and \eqref{Q1-adjoint} for $v=\Lambda u$ it remains to estimate $4$ terms. Lemma \ref{lem:a_0} estimates $a_0'$ term. Lemma \ref{lem:Q1+log} estimates $c_0$ term. Lemma \ref{bound-b-} estimates $R^{-}\in S^{-}$ term.\\

 Meanwhile, Lemma \ref{Q1-b0} estimates $b_0^{''}$ term, where we interpolate the $H^{-\frac{1}{2}}$ norm between $H^{-N-s}$ and $L^2$ norms using Lemma \ref{bound-b-}.

\end{proof}

\subsection{Commutator $Q_2$. Calculus}\label{section Q2}
We consider $Q_2=[L_2,\Lambda]$. From the calculus of PDO (treating $\lambda \in S^{s}_{1,\eps}$ for arbitrary small $\eps>0$) we get
\begin{align*}
  q_2 = \sum_{|\alpha|=1}^2 \frac{i^{|\alpha|}}{\alpha!} \left(\deta^\alpha l_2 \dy^\alpha \lambda - \dy^\alpha l_2 \deta^\alpha \lambda\right) -\sum_{|\alpha|=3}^{N+s+1} \frac{i^{|\alpha|}}{\alpha!}  \dy^\alpha l_2 \deta^\alpha \lambda\quad mod \quad S^{-N}_{1,\eps}
\end{align*}
which using \eqref{lambda-y-2} and \eqref{lambda-xi} gives
\begin{prop}\label{Q2-1step}
There exist operators $d_{j}=d_{j}(x,y,\eta) \in \mathcal{D}_j$ and $a_j=a_{j}(x,y,\eta,\xi) \in \mathcal{\tld A}_j$ for $j=0$, $1$ and $r^-\in S^-$ satisfying
 $\overline{a_1}-a_1 \in \mathcal{\tld A}_0$, such that
  \begin{align*}
  q_2 = p_{2}\lambda := \left(d_{1}\log\J  + a_{1} + d_{0} \log^2 \J    + a_{0}  + r^{-}\right)\lambda  +R_{-N}
\end{align*}
where $R_{-N}\in S^{-N}_{1,\eps}$.
\end{prop}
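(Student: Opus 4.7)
I would follow exactly the template of Lemma~\ref{q1-sym}, exploiting the structural simplification that $l_2(y,\eta)$ is independent of both $x$ and $\xi$: in the expansion given just before the Proposition, only factors of the form $\deta^\alpha l_2\cdot\dy^\alpha\lambda$ and $\dy^\alpha l_2\cdot\deta^\alpha\lambda$ appear. After dividing by $\lambda$ and grouping by order and by powers of $\log\J$, each piece gets assigned to one of $\mathcal{D}_j$, $\tA_j$ or $S^-$. The key observation that forces the appearance of the class $\mathcal{D}$ (and was absent in the $Q_1$ analysis) is that $y$-derivatives of $\lambda$ introduce \emph{no} $\xi$-dependence apart from an overall power of $\log\J$.

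Concretely, writing $\lambda=\J^s\exp(-(N+s+3)\phi\log\J)S_{\delta_5}$ and differentiating in $y$, and noting that both $\J$ and $S_{\delta_5}$ are $y$-independent, one sees by induction that $\dy^\alpha\lambda/\lambda$ is a polynomial in $\{\dy^\beta\phi:|\beta|\le|\alpha|\}$ whose coefficients are pure powers of $\log\J$. Since the $\dy^\beta\phi$ are $(x,y)$-functions lying in the class $\mathcal{O}$ (by \eqref{phi-derivatives}) and $\deta^\alpha l_2\in\MA_{2-|\alpha|}$ depends only on $(y,\eta)$, the product $\deta^\alpha l_2\cdot\dy^\alpha\lambda/\lambda$ is manifestly $\xi$-independent. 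For $|\alpha|=1$ this yields a contribution of the form $d_1\log\J$ with $d_1\in\mathcal{D}_1$; for $|\alpha|=2$ one gets a $\log^2\J$ piece with coefficient $d_0\in\mathcal{D}_0$ (from the $(\dy\phi)^2$ term) and a subleading $\log\J$ piece (from $\dy^2\phi$) which absorbs into $d_1$, using $\mathcal{D}_0\subset\mathcal{D}_1$.

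For the second family, $\dy^\alpha l_2\in\MA_2$ still depends only on $(y,\eta)$, but $\deta^\alpha\lambda/\lambda\in\tA_{-|\alpha|}$ (by Lemma~\ref{lambda-classes}(\ref{lambda-xi})) retains genuine $\xi$-dependence through the factors $\J$ and $\jap{\delta_5(\xi,\eta)}$, so the product sits in $\tA_{2-|\alpha|}$ but \emph{not} in $\mathcal{D}$. For $|\alpha|=1,2$ this produces $a_1\in\tA_1$ and $a_0\in\tA_0$, and for $|\alpha|\ge3$ the product has order $\le-1$ and is collected into $r^-\in S^-$, together with the original $S^{-N}_{1,\eps}$-tail absorbed into $R_{-N}$. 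Finally, to see the principal part of $a_1=-i\,\dy l_2\cdot\deta\lambda/\lambda$ is purely imaginary, note that by Lemma~\ref{Lemma-L1} the principal part of $\dy l_2$ is the real symbol $\sum_{j,k}\dy\beta_{jk}\,\eta_j\eta_k$, while the leading part of $\deta\lambda/\lambda$ is $[s-(N+s+3)\phi]\eta/\J^2$, again real; hence $a_1$ is $-i$ times a real order-$1$ symbol, giving the claimed condition on $\overline{a_1}-a_1$ (i.e., anti-self-adjointness at the top order, matching the role of $a_1$ in Lemma~\ref{q1-sym}). The main obstacle is the bookkeeping of the sub-leading $\log\J$ factors produced by successive $y$-differentiations of $\lambda$ and making sure no hidden $\xi$-dependence enters the $d_j$; the explicit recursion above handles this directly.
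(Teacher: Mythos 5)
Your proposal is correct and takes essentially the same approach as the paper: the paper's (much terser) proof likewise reads off class membership term by term from the expansion of $q_2$, assigning $\deta^\alpha l_2\,\dy^\alpha\lambda/\lambda$ (which is $\xi$-independent once the explicit $\log\J$ powers are factored out) to $\mathcal{D}_1\log\J$ and $\mathcal{D}_0\log^2\J$, assigning $\dy^\alpha l_2\,\deta^\alpha\lambda/\lambda$ to $\tA_1$ and $\tA_0$ via Lemma \ref{lambda-classes}, and collecting the $|\alpha|\ge 3$ terms into $S^-$ and the tail into $R_{-N}$, exactly as you do with more explicit bookkeeping. One remark: your computation shows the principal parts of $a_1$ and $d_1$ are purely imaginary, which literally yields $\overline{a_1}+a_1\in\tA_0$ rather than the stated $\overline{a_1}-a_1\in\tA_0$; since anti-self-adjointness at top order is what Lemma \ref{q2_adj} actually uses (and matches Lemma \ref{q1-sym}), your reading agrees with the paper's intent and the sign discrepancy lies in the statement, not in your argument.
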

\begin{proof}
For $|\alpha|=1$ we have $\deta^\alpha l_2\in \mathcal{ A}_1$ and $\frac{\dy^\alpha \lambda}{\log^{|\alpha|}\J\cdot \lambda}\in \mathcal{O}$ both independent of $\xi$, which gives the first term $d_{1}\log\J \lambda$. We also have $\dy^\alpha l_2 \in \mathcal{ A}_2,\; \frac{\deta^\alpha \lambda}{\lambda} \in \mathcal{\tld A}_{-1}$ for $|\alpha|=1$ which gives the second term $a_{1}\lambda$. Similarly, we obtain the third and forth terms by taking $|\alpha|=2$, and $r^{-} \lambda$ accommodates $|\alpha|\geq 3$ terms.
\end{proof}

\begin{lem}\label{p2_lemma}
  $p_2\lambda$ can be rewritten as $p_2\lambda = \tld p_{2} \comp \lambda \quad mod \  S^{-N}_{1,\eps}$ with
  \begin{align*}
   \tld p_{2} = d_{1}\log\J + a_{1} + \tld d_{0} \log^2 \J+ (\tld a_{0}+\tld b_{0}) \log\J+a_{0} + \tld r^{-}
  \end{align*}
  for $\tld a_{j}$, $\tld d_{j}$ with the same properties as above and $\tld b_{j}\in \mathcal{\tld B}_{j}$ for $j=0,1$, and $\tld r^{-}\in S^-$.
\end{lem}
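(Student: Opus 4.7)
The plan is to mirror the proof of Lemma~\ref{q1-comp}: write the asymptotic expansion
\begin{equation*}
  \tld p_2 \comp \lambda \equiv \tld p_2 \cdot \lambda + \sum_{1\leq |\alpha|\leq N+s+1} \frac{(-i)^{|\alpha|}}{\alpha!}\,\partial_{\xi,\eta}^\alpha \tld p_2 \cdot \partial_{x,y}^\alpha \lambda \mod S^{-N}_{1,\eps},
\end{equation*}
set this equal to $p_2 \cdot \lambda$, and solve for $\tld p_2$ inductively order by order. At the principal level $\tld p_2 = p_2$ works, so $d_1\log\J + a_1$ transfers from $p_2$ to $\tld p_2$ unchanged; the correction terms arising from $|\alpha|\geq 1$ will then have to be absorbed into the lower-order summands of $\tld p_2$.

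Next I would analyze the first-order ($|\alpha|=1$) corrections applied to the leading $d_1\log\J + a_1$, using Lemma~\ref{lambda-classes} to identify the classes of each factor. Because $d_1=d_1(x,y,\eta)$ is $\xi$-independent, $\partial_\xi(d_1\log\J)=d_1\,\partial_\xi\log\J$, and combined with $\partial_x\lambda/\lambda\in\log\J\cdot\tB_0$ this contributes a symbol in $\log\J\cdot\tB_0$ absorbed into $\tld b_0\log\J$. The term $\partial_\eta(d_1\log\J)\cdot\partial_y\lambda/\lambda$ splits into a $\log^2\J$ piece whose coefficient is a product of two $\xi$-independent quantities (hence in $\mathcal{D}_0$) and combines with $d_0\log^2\J$ to define $\tld d_0\log^2\J$, together with a $\log\J$ piece absorbed into $(\tld a_0+\tld b_0)\log\J$. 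Finally, $\partial_{\xi,\eta}a_1\in\tA_0$ paired with $\partial_{x,y}\lambda/\lambda$ yields $\log\J\cdot(\tA_0+\tB_0)$, which is also absorbed into $(\tld a_0+\tld b_0)\log\J$; no new order-zero log-free term appears, so the $a_0$ summand of $\tld p_2$ is inherited unchanged from $p_2$.

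All strictly negative-order contributions---those from $|\alpha|\geq 2$ applied to the leading part of $\tld p_2$, and from $|\alpha|\geq 1$ applied to the subleading $d_0\log^2\J+a_0+r^-$---I would collect into $\tld r^-\in S^-$ via a finite recursion patterned on the end of the proof of Lemma~\ref{q1-comp}. The closure properties $\tA_j\cdot\tB_k\subset\tB_{j+k}$ and $\tA_j\cdot\tA_k\subset\tA_{j+k}$, together with the fact that $\partial_{\xi,\eta}$ strictly lowers the order of a symbol, guarantee that the iteration terminates modulo $S^{-N}$.

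The main obstacle I anticipate is not conceptual but one of bookkeeping: preservation of the $\mathcal{D}_0$ structure (i.e.\ the $\xi$-independence) of the new coefficient $\tld d_0$. This works out because the only source at the leading order of a fresh $\log^2\J$ contribution is the product $\partial_\eta d_1\cdot \partial_y\lambda/\lambda$, which is a product of two $\xi$-independent quantities; every other generator of a log correction either already carries a $\tB$-support factor (so feeds $\tld b_0\log\J$) or acquires an extra $\partial_\xi$ derivative that lowers the order into $\tld r^-$. Apart from this point the proof is a mechanical adaptation of the $Q_1$ analysis in Lemma~\ref{q1-comp}.
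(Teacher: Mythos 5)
Your proposal is correct and follows essentially the same line as the paper's proof: expand $\tld p_2\circ\lambda$ by PDO calculus, match order by order against $p_2\cdot\lambda$, use Lemma~\ref{lambda-classes} to track which corrections land in $\tB_0\log\J$, $\tA_0\log\J$, or $\mathcal{D}_0\log^2\J$, and sweep the negative-order leftovers into $\tld r^-$ via the same finite recursion used for $Q_1$. Your key observation — that the only fresh $\log^2\J$ contribution comes from $\deta d_1\cdot\dy\lambda/\lambda$, a product of two $\xi$-independent factors — is exactly what the paper's explicit formula $\tld d_0=d_0-\sum i\,\deta^\alpha d_1\,\frac{\dy^\alpha\lambda}{\lambda\log\J}$ encodes.
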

\begin{proof}
We can write
\begin{align}\label{p2}
(\tld p_{2} -\tld r^{-})\comp\lambda&=\left(\tld p_{2}-\tld r^{-}\right)\lambda
+\sum_{\alpha}\frac{i^{|\alpha|}}{\alpha!} \left(\dxi^\alpha\left(\tld p_{2}-\tld r^{-}\right) \dx^\alpha\lambda+\deta^\alpha\left(\tld p_{2}-\tld r^{-}\right) \dy^\alpha\lambda\right)\nonumber\\
&=\left(d_{1}\log\J+a_{1}+\left(\tld d_{0}+\sum_{|\alpha|=1}i\deta^{\alpha}d_{1}\frac{\dy^{\alpha}\lambda}{\lambda\log\J}\right)\log^{2}\J\right)\lambda\\
&+\left(\left(\tld a_{0}+\tld b_{0}+\sum_{|\alpha|=1}i\left(\partial_{\xi,\eta}^{\alpha}a_{1}+d_{1}\partial_{\xi,\eta}^{\alpha}\log\J\right)\frac{\partial_{x,y}^{\alpha}\lambda}{\lambda\log\J}\right)\log\J+a_{0}\right)\lambda\nonumber\\
&+r^{-}_{def}\lambda \quad mod\  S^{-N}_{1,\eps}\nonumber
\end{align}
where we defined
\begin{align*}
 r^{-}_{def}:=&\sum_{|\alpha|=1}\partial_{\xi,\eta}^{\alpha}\left(\tld d_{0}\log^{2}\J+(\tld a_{0}+\tld b_{0})\log\J\right)\frac{\partial_{x,y}^{\alpha}\lambda}{\lambda}\\
&+\sum_{|\alpha|=2}^{N+s+1}\partial_{\xi,\eta}^{\alpha}\left(\tld p_{2}-\tld r^{-}\right)\frac{\partial_{x,y}^{\alpha}\lambda}{\lambda}\in S^{-}.
\end{align*}
Now we choose
\begin{align}\label{new_symb}
 \tld d_{0}&=d_{0}-\sum_{|\alpha|=1}i\deta^{\alpha}d_{1}\frac{\dy^{\alpha}\lambda}{\lambda\log\J}\\
 \tld a_{0}&=-\sum_{|\alpha|=1}i\left(\partial_{\eta}^{\alpha}a_{1}+d_{1}\partial_{\eta}^{\alpha}\log\J\right)\frac{\partial_{y}^{\alpha}\lambda}{\lambda\log\J}\\
 \tld b_{0}&=-\sum_{|\alpha|=1}i\left(\partial_{\xi}^{\alpha}a_{1}+d_{1}\partial_{\xi}^{\alpha}\log\J\right)\frac{\partial_{x}^{\alpha}\lambda}{\lambda\log\J}
\end{align}
It follows from (\ref{lambda-x-2})-(\ref{lambda-xi}) that these symbols belong to the corresponding classes. Here the coefficients from $\mathcal{\tld B}$ class appear when differentiating $\lambda$ with respect to $x$.\\
We define the remainder term $\tld r^{-}:=\sum_{k=1}^{N+s+1}r^{-}_{k}$, where each $r^{-}_{k}$ is defined inductively as follows
\begin{align*}
r_{1}^{-}&=r^{-}-r^{-}_{def}\\
r_{k}^{-}&=-\sum_{|\alpha|=1}^{N+s+1}\partial_{\xi,\eta}^{\alpha}r_{k-1}^{-}\frac{\partial_{x,y}^{\alpha}\lambda}{\lambda},\quad k\geq 2.
\end{align*}
One can check that this gives
\begin{equation*}
\tld p_{2}\comp\lambda=p_{2}\lambda\quad mod\  S^{-N}_{1,\eps}.
\end{equation*}
\end{proof}
\hide{
Now, we denote $c^{-}=r^{-}-r^{-}_{def}$ where $r^{-}$ as in Proposition \ref{Q2-1step}, and define inductively
\begin{align*}
r_{1}^{-}&=c^{-}\\
r_{2}^{-}&=-\sum_{|\alpha|=1}^{N+s+1}\partial_{\xi,\eta}^{\alpha}r_{1}^{-}\frac{\partial_{x,y}^{\alpha}\lambda}{\lambda}\\
r_{3}^{-}&=-\sum_{|\alpha|=1}^{N+s+1}\partial_{\xi,\eta}^{\alpha}r_{2}^{-}\frac{\partial_{x,y}^{\alpha}\lambda}{\lambda}\\
&\ldots\\
r_{N+s+1}^{-}&=-\sum_{|\alpha|=1}^{N+s+1}\partial_{\xi,\eta}^{\alpha}r_{N+s}^{-}\frac{\partial_{x,y}^{\alpha}\lambda}{\lambda}
\end{align*}
and let
\begin{equation*}
\tld r^{-}=r_{1}^{-}+r_{2}^{-}+r_{3,N}^{-}+\ldots+r_{N+s+1}^{-}.
\end{equation*}
Note that since $c^{-}\in S^{{-\eps}_{1,\eps}}$, by definition we have $r_{j}^{-}\in S^{-j+1-\eps}_{1,\eps}$
We then have
\begin{align*}
\tld r^{-}\comp \lambda = r_{1}^{-}\lambda&+\left(r_{2}^{-}+\sum_{|\alpha|=1}^{N+s+1}\partial_{\xi,\eta}^{\alpha}r_{1}^{-}\frac{\partial_{x,y}^{\alpha}\lambda}{\lambda}\right)\lambda\\
&+\left(r_{3}^{-}+\sum_{|\alpha|=1}^{N+s+1}\partial_{\xi,\eta}^{\alpha}r_{2}^{-}\frac{\partial_{x,y}^{\alpha}\lambda}{\lambda}\right)\lambda\\
&+\ldots+r_{N+s+1}^{-}=c^{-}\lambda\quad mod\  S^{-N}_{1,\eps}.
\end{align*}
Combining this with (\ref{p2}) and (\ref{new_symb}) we obtain
\begin{equation*}
\tld p_{2}\comp\lambda=\left(\tld p_{2}-\tld r^{-}\right) \comp\lambda +\tld r^{-}\comp \lambda= p_{2}\lambda\quad mod\  S^{-N}_{1,\eps}.
\end{equation*}
}

We now use the fact that the top order terms are purely imaginary to obtain a better estimate for the real part of $(gQ_2u,\Lambda u)$.
\begin{lem}\label{q2_adj}
  Let $\left(g\tld p_{2}\right)^*$ be the operator adjoint of $\left(g\tld p_{2}\right)$. Then
    \begin{align*}
   \left(g\tld p_{2}\right)^*+ \left(g\tld p_{2}\right)  &= g\left(\tld d_0' \log^2 \J+ (\tld a_{0}'+\tld b_{0}') \log\J+a_{0} \right)\\
	&+\sum_{|\alpha_1|=1}\dx^{\alpha_1} g\left(\tld a'_{0}+\tld a^{''}_{0}\log \J\right) + r^{-}
  \end{align*}
	where the symbols are different from before but belong to the same symbol classes.
\end{lem}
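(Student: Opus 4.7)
The plan is to apply the standard adjoint calculus for pseudodifferential operators,
$$(g\tilde p_2)^*=\sum_{|\alpha|\le N+s+1}\frac{(-i)^{|\alpha|}}{\alpha!}\partial^\alpha_{\xi,\eta}\partial^\alpha_{x,y}\bigl(g\,\overline{\tilde p_2}\bigr) \mod S^{-N},$$
then add $g\tilde p_2$ and exploit the fact, recorded in Lemma \ref{p2_lemma} and Proposition \ref{Q2-1step}, that the top-order principal parts of $d_1\log\J$ and $a_1$ in $\tilde p_2$ are purely imaginary, so that $\tilde p_2+\overline{\tilde p_2}$ drops one order.

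First I would Leibniz-expand $\partial^\alpha_{x,y}(g\,\overline{\tilde p_2})=\sum_{\beta\le\alpha}\binom{\alpha}{\beta}\partial^\beta_{x,y}g\cdot\partial^{\alpha-\beta}_{x,y}\overline{\tilde p_2}$ and use $g=g(x)$ to see that only pure $x$-derivatives of $g$ survive. The $\beta=0$ pieces combine with $g\tilde p_2$ into a $g\cdot[\,\cdot\,]$ factor: after the cancellation of the purely imaginary top-order symbols, what is left has the form $g(\tilde d_0'\log^2\J+(\tilde a_0'+\tilde b_0')\log\J+a_0)$, with $\tilde d_0'\in\mathcal{D}_0$, $\tilde a_0'\in\tA_0$, $\tilde b_0'\in\tB_0$, by Lemma \ref{lambda-classes} (any derivative $\partial^\alpha_{x,y}$ acting on $\lambda$-quotients brings out a factor $\dx^\beta\phi$ or $\dy^\beta\phi$, placing the resulting piece in $\tB$ or $\tA$, respectively) together with Lemma \ref{Lemma-L1}.

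Second, the $|\beta|\ge 1$ contributions are precisely the advertised $\sum_{|\alpha_1|=1}\partial_x^{\alpha_1}g\cdot[\,\cdot\,]$ terms. The leading contribution, from $|\alpha|=|\beta|=1$, is $-i\sum_k\partial_{x_k}g\cdot\partial_{\xi_k}\overline{\tilde p_2}$. Decomposing $\partial_{\xi_k}\overline{\tilde p_2}$: the top-order pieces $\overline{d_1}\,\xi_k/\J^2$ (since $d_1$ is $\xi$-independent) and $\partial_{\xi_k}\overline{a_1}$ are log-free order-$0$ symbols in $\tA_0$ and account for $\tilde a_0'$; the subleading pieces obtained by letting $\partial_{\xi_k}$ fall on $\overline{\tilde d_0\log^2\J}$ and $\overline{(\tilde a_0+\tilde b_0)\log\J}$ retain one surviving factor of $\log\J$ and account for $\tilde a_0''\log\J$. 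All contributions with $|\alpha|\ge 2$ or $|\beta|\ge 2$ drop at least two orders and fall into the remainder $r^-\in S^-$, together with the adjoint of $\tilde r^-$.

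The main obstacle is the bookkeeping: verifying that each of the many symbols produced by the adjoint calculus lies in the correct class ($\mathcal{D}_0$, $\tA_0$, $\tB_0$, or $S^-$), and that the coefficient of $\partial_x^{\alpha_1}g$ for $|\alpha_1|=1$ splits cleanly into a log-free and a $\log\J$-carrying part, both in $\tA_0$. This parallels the $Q_1$ analysis of Lemmas \ref{q1-sym}--\ref{q1-adjoint}, but is slightly easier here since $g=g(x)$ depends on $x$ alone, so no $\partial_y g$ terms arise.
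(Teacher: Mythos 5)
Your proposal matches the paper's own (very terse) proof: both expand $(g\tld p_{2})^{*}$ by the standard adjoint asymptotic expansion truncated at $|\alpha|=1$ modulo a negative-order remainder, use $g=g(x)$ and the Leibniz rule to split off the $\dx^{\alpha_1}g$ contributions, cancel the purely imaginary top-order symbols $g d_{1}\log\J+g a_{1}$ against their conjugates, and sort the remaining terms into the classes $\mathcal{D}_0$, $\tA_0$, $\tB_0$, $S^{-}$ exactly as in Lemma \ref{p2_lemma} and Proposition \ref{Q2-1step}. The only (harmless) bookkeeping difference is that the pieces produced by $\partial_{\xi}$ falling on $\tld b_{0}\log\J$ have negative order but $\delta_2$-dependent seminorms, so they belong in $r^{-}$ rather than in $\tld a_{0}''\log\J$; this does not affect the stated conclusion.
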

\begin{proof}
First, using Lemma \ref{p2_lemma} we can write
\begin{equation*}
g\tld p_{2} = gd_{1}\log\J + ga_{1} + g\tld d_{0} \log^2 \J+ g(\tld a_{0}+\tld b_{0}) \log\J+ga_{0} + \tld r^{-},
\end{equation*}
where $r^{-}$ is a symbol of negative order.
Using this we have
\begin{align*}
\left(g\tld p_{2}\right)^*&=\sum_{|\alpha|=0}^{1} \frac{i^{|\alpha|}}{\alpha!} \partial_{\xi,\eta}^{\alpha} \partial_{x,y}^{\alpha} \overline{\left(g\tld p_{2}\right)}+r_{1}^{-}\\
&=\overline{\left(g\tld p_{2}\right)}+ga'_{0}\log \J+ga''_0+\sum_{|\alpha_1|=1}(\dx^{\alpha_1} g) \left(\tld a_{0}+\tld a'_{0}\log \J\right)+r_{2}^{-}
\end{align*}
so the anti self-adjoint terms cancel and we obtain the desired result.
\end{proof}

\subsection{Commutator $Q_3$. Calculus}\label{section Q3}
Let $Q_3\equiv [g(x),\Lambda]L_2$, then
\begin{equation*}
q_3=\sum_{|\alpha|=1}^2 \frac{i^{|\alpha|}}{\alpha!} \left( - \dx^\alpha g \dxi^\alpha \lambda\right)\comp l_2 -\sum_{|\alpha|=3}^{N+s+1} \frac{i^{|\alpha|}}{\alpha!}  \dx^\alpha g \dxi^\alpha \lambda \comp l_2\quad mod \quad S^{-N}_{1,\eps}
\end{equation*}
\begin{prop}
There exist operators $ a_j\in \mathcal{\tld A}_j$ for $j=0$, $1$, $-1$, and $r^{-}\in S^{-}$ satisfying\\
 $\overline{ a_1}- a_1 \in \mathcal{\tld A}_0$, such that
\begin{equation*}
q_3=p_{3}\lambda:=\left( \sum_{|\alpha|=1}\dx^{\alpha} g a_1 + a_0  + a_{-1}\right)\lambda + r^{-}\quad mod \quad S^{-N}_{1,\eps}.
\end{equation*}
\end{prop}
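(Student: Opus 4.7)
The plan mirrors Proposition \ref{Q2-1step}, with the simplification that in $Q_3=[g,\Lambda]L_2$ no $\log\J$ factors appear: because $g$ depends only on $x$, the commutator expansion produces only $\dxi^\alpha\lambda$ terms and no $\dx^\alpha\lambda$ terms. I would first rewrite the two sums in the given formula for $q_3$ as the single expression
\[
q_3 \equiv -\sum_{|\alpha|=1}^{N+s+1} \frac{i^{|\alpha|}}{\alpha!} (\dx^\alpha g)(\dxi^\alpha \lambda) \comp l_2 \mod S^{-N}_{1,\eps}.
\]
For each $|\alpha|$, applying the Leibniz composition rule and using that $l_2$ has no $(x,\xi)$-dependence while $\dx^\alpha g$ has no $(\xi,\eta)$-dependence yields
\[
[(\dx^\alpha g)(\dxi^\alpha \lambda)] \comp l_2 \equiv (\dx^\alpha g)\sum_{|\beta|\le N+s+1} \frac{(-i)^{|\beta|}}{\beta!} (\deta^\beta\dxi^\alpha\lambda)(\dy^\beta l_2) \mod S^{-N}.
\]

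Next I would invoke Lemma \ref{lambda-classes} to factor out $\lambda$, writing $\deta^\beta\dxi^\alpha\lambda = \tld a^{(\alpha,\beta)}\cdot\lambda$ with $\tld a^{(\alpha,\beta)}\in\tA_{-|\alpha|-|\beta|}$. Since $\dy^\beta l_2\in\mathcal{A}_2$ has $\delta_2$-independent seminorms, and products of $\tA$ with $\mathcal{A}$ remain in $\tA$, each $\tld a^{(\alpha,\beta)}\cdot\dy^\beta l_2$ lies in $\tA_{2-|\alpha|-|\beta|}$. I would then group by order: the order-$1$ contribution arises only from $|\alpha|=1$, $|\beta|=0$ and collects into $\sum_{|\alpha|=1}\dx^\alpha g\cdot a_1$ with $a_1\in\tA_1$; the order-$0$ contributions (from $|\alpha|=1$, $|\beta|=1$ together with $|\alpha|=2$, $|\beta|=0$, with the $\dx^{\alpha'} g$ for $|\alpha'|=2$ absorbed into the coefficient since it is smooth and bounded by Remark \ref{bounded-coeff}) collect into $a_0\in\tA_0$; the order-$(-1)$ contributions (from $|\alpha|=1$, $|\beta|=2$; $|\alpha|=2$, $|\beta|=1$; $|\alpha|=3$, $|\beta|=0$) collect analogously into $a_{-1}\in\tA_{-1}$; all remaining terms have order $\le -2$ and constitute $r^-\in S^-$.

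Finally I would verify the conjugation condition $\overline{a_1}-a_1\in\tA_0$ from the explicit leading factorization $a_1\sim \tld a^{(\alpha,0)}_{-1}\cdot l_2$ (with the $-i$ and $\alpha!$ prefactor from the commutator incorporated into the definition of $a_1$). By Lemma \ref{lambda-classes}, $\tld a^{(\alpha,0)}_{-1}=\dxi^\alpha\lambda/\lambda$ is real-valued; by Lemma \ref{Lemma-L1}, the principal symbol $\sum_{j,k}\beta_{jk}(y)\eta_j\eta_k$ of $l_2$ is real, and the remaining order-$1$ piece $i\sum_j\beta_j(y)\eta_j$ of $l_2$, when multiplied by $\tld a^{(\alpha,0)}_{-1}\in\tA_{-1}$, yields a symbol of order $0$ in $\tA_0$. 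Together these facts give $\overline{a_1}-a_1\in\tA_0$. The main obstacle will be the careful bookkeeping of the Leibniz terms across the various $(|\alpha|,|\beta|)$ combinations, and ensuring that the $\mathcal{O}$-factor pattern of the $\dxi^\alpha\lambda/\lambda$ quotients is preserved under multiplication by the polynomial-in-$\eta$ symbols coming from $\dy^\beta l_2$, which is precisely what the $\tA$-class closure under multiplication by $\mathcal{A}$ is designed to deliver.
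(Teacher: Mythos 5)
Your proof follows essentially the same route as the paper's: consolidate the commutator expansion for $q_3$ (only $\dxi^\alpha\lambda$ terms survive since $g=g(x)$), apply the Leibniz composition rule with $l_2$, factor out $\lambda$ via Lemma~\ref{lambda-classes}, and group by order using the closure of $\tA$ under multiplication by $\MA$. The paper's own proof is presented in two steps (first factor $[g,\Lambda]$ out as $a'_{-1}\lambda$, then compose with $l_2$) rather than your single double-sum, but the bookkeeping is equivalent and your order-by-order tally is correct.

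There is, however, an internal inconsistency in your verification of the conjugation condition. You state that the $-i$ prefactor from the commutator calculus is incorporated into the definition of $a_1$, but then argue that $a_1$ has real principal part because $\dxi^\alpha\lambda/\lambda$ and $\sum_{j,k}\beta_{jk}\eta_j\eta_k$ are both real. These cannot both hold: with the $-i$ absorbed, the principal part of $a_1$ is $-i\cdot(\text{real})$, hence purely \emph{imaginary}, which gives $\overline{a_1}+a_1\in\tA_0$ rather than $\overline{a_1}-a_1\in\tA_0$. The anti-self-adjoint version $\overline{a_1}+a_1\in\tA_0$ is in fact what is used downstream in Lemma~\ref{q3_adj} (cancellation in $\tld p_3+\tld p_3^*$), and it matches the wording ``principal part\ldots purely imaginary'' in the parallel Lemma~\ref{q1-sym}. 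The statement $\overline{a_1}-a_1\in\tA_0$ in the proposition appears to be a misprint in the paper, and you have inherited it; your argument would be correct and consistent once the sign in the conjugation condition is fixed and stated as anti-self-adjointness.
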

\begin{proof}
First, we compute
\begin{equation*}
\sum_{|\alpha|=1}^2 \frac{i^{|\alpha|}}{\alpha!} \left( - \dx^\alpha g \dxi^\alpha \lambda\right)-\sum_{|\alpha|=3}^{N+s+1} \frac{i^{|\alpha|}}{\alpha!}  \dx^\alpha g \dxi^\alpha \lambda = \sum_{|\alpha|=1} \dx^{\alpha} g a_{-1} \lambda + a_{-2}\lambda + a_{-3}\lambda
+ R_{-N}
\end{equation*}
where $a_{-j}\in \mathcal{\tld A}_{-j}$ and $R_{-N}\in S^{-N}_{1,\eps}$. Next, using the PDO calculus for compositions
\begin{align*}
\left(\sum_{|\alpha|=1} \dx^{\alpha} g a_{-1} \lambda + a_{-2}\lambda + a_{-3}\lambda\right)\comp l_2 &=
\left(\sum_{|\alpha|=1} \dx^{\alpha} g a_{-1} \lambda + a_{-2}\lambda + a_{-3}\lambda\right)l_2+\sum_{|\beta|\geq 1}\deta^\beta (a_{-1}'\lambda)\dy^\beta l_2\\
&=\sum_{|\alpha|=1} \dx^{\alpha} g a_{1} \lambda + a_{0}\lambda + a_{-1}\lambda
\end{align*}
where we used the notation
\begin{equation*}
a_{-1}':=\sum_{|\alpha|=1} \dx^{\alpha} g a_{-1}  + a_{-2} + a_{-3}
\end{equation*}
This concludes the proof.
\end{proof}

\begin{lem}
There exist operators $ a_{j}\in \mathcal{\tld A}_j$ and $ b_{j}\in \mathcal{\tld B}_j$ for $j=0$, $1$ and $r^-\in S^-$,
such that $p_{3}\lambda$ can be rewritten as $p_{3}\lambda=\tld p_{3}\comp \lambda\quad mod \quad S^{-N}_{1,\eps}$ with
\begin{equation*}
\tld p_{3}=\sum_{|\alpha_1|=1} \dx^{\alpha_1} g a_1+\dx^{\alpha_1} g (a'_{0}+b'_{0}) \log\J + \tld a_0  + r^-.
\end{equation*}
\end{lem}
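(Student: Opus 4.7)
The plan is to mimic Lemma \ref{p2_lemma}. Set up the stated ansatz for $\tld p_3$ and compute its composition with $\lambda$ via the {\PDO} expansion
\begin{align*}
  \tld p_3 \comp \lambda = \tld p_3 \cdot \lambda + \sum_{|\alpha|\ge 1}\frac{i^{|\alpha|}}{\alpha!}\left(\dxi^\alpha \tld p_3 \cdot \dx^\alpha \lambda + \deta^\alpha \tld p_3 \cdot \dy^\alpha \lambda\right) \,\,\text{mod}\,\, S^{-N}_{1,\eps};
\end{align*}
the coefficients of $\tld p_3$ are then pinned down by requiring the right-hand side to agree with $p_3 \cdot \lambda$ order by order.

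The principal (order $1$) match is immediate: both $\tld p_3$ and $p_3$ begin with $\sum_{|\alpha_1|=1}\dx^{\alpha_1} g\,a_1$. For the order-$0$-with-log part, I examine the $|\alpha|=1$ composition corrections produced by the top of the ansatz. Because $\dx^{\alpha_1} g$ is independent of $(\xi,\eta)$, every $\dxi$ and $\deta$ derivative falls on $a_1$, and Lemma \ref{lambda-classes} supplies
\begin{align*}
  \frac{\dx^\alpha\lambda}{\lambda\log\J}\in \tB_0,\qquad \frac{\dy^\alpha\lambda}{\lambda\log\J}\in \tA_0\quad\text{for }|\alpha|=1.
\end{align*}
Since $p_3$ carries no order-$0$-with-log term, $\tld p_3$ must contain the negative of this correction, which has precisely the structure $\sum_{|\alpha_1|=1}\dx^{\alpha_1} g(a_0'+b_0')\log\J$ with $a_0'\in\tA_0$ (from the $\dy\lambda$ contributions) and $b_0'\in\tB_0$ (from the $\dx\lambda$ contributions); this fixes both $a_0'$ and $b_0'$.

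For the order-$0$ terms without a logarithm, the only direct contribution from $p_3$ is $a_0\cdot\lambda$, and no fresh $\log^2\J$ term appears at this level: the only potential source is the $|\alpha|=2$ correction of the ansatz's top part, which pairs $\dxi^\alpha a_1,\deta^\alpha a_1\in \tA_{-1}$ with $\dxy^\alpha\lambda/\lambda$ (carrying a $\log^2\J$ factor together with a $\tA_0$ or $\tB_0$ symbol); the product has order strictly below $0$ and hence lies in $S^-$. Therefore $\tld a_0 = a_0 \in \tA_0$ up to $S^-$-corrections, which explains the absence of a $\log^2\J$ term in the ansatz. All remaining lower-order contributions --- the original $a_{-1}\cdot\lambda$ from $p_3$, the preceding $r^-$, and the higher-order composition corrections of all ansatz terms --- are assembled into a single $r^-\in S^-$ via the same inductive scheme $r^-=\sum_k r^-_k$, $r^-_k = -\sum_{|\gamma|\ge 1}\dxie^\gamma r^-_{k-1}\cdot \dxy^\gamma\lambda/\lambda$, used in the proof of Lemma \ref{p2_lemma}.

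The main obstacle is the careful bookkeeping of symbol classes: one must verify at every stage that $x$-derivatives of $\lambda$ contribute only $\tB$-factors (supported away from the degeneracy) while $(y,\xi,\eta)$-derivatives contribute only $\tA$-factors. The structure $\dx^{\alpha_1} g\cdot a_1$, with $g$ depending only on $x$ and $a_1\in \tA_1$ contributing $(\xi,\eta)$-dependence only through its $\tA$-structure, ensures that no mixing between the classes occurs, and the argument runs in parallel with the treatment of $Q_2$ in Section \ref{section Q2}.
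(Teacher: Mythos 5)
Your proposal is correct and follows essentially the same route as the paper: the same ansatz-and-match composition argument, the same choices $a_0'=-\sum_{|\alpha|=1}i\,\partial_\eta^\alpha a_1\,\tfrac{\partial_y^\alpha\lambda}{\lambda\log\J}$ and $b_0'=-\sum_{|\alpha|=1}i\,\partial_\xi^\alpha a_1\,\tfrac{\partial_x^\alpha\lambda}{\lambda\log\J}$ coming from the first-order corrections (with $x$-derivatives of $\lambda$ giving $\tB$-factors and $y$-derivatives giving $\tA$-factors), and the same inductive absorption of all lower-order terms into $\tld r^-$ as in Lemma \ref{p2_lemma}. Your explicit remark on why no $\log^2\J$ term arises (no $\log\J$ factor at top order, unlike $Q_2$) is a nice clarification consistent with the paper.
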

\begin{proof}
Proceeding as in the proof of Lemma \ref{p2_lemma} we compute
\begin{align}\label{p3}
 (\tld p_{3}-r^{-}) \comp\lambda
 &=\left(\tld p_{3}-r^{-}\right)\lambda\nonumber\\
&+\sum_{\alpha}\frac{i^{|\alpha|}}{\alpha!} \left(\dxi^\alpha\left(\tld p_{3}-r^{-}\right) \dx^\alpha\lambda+\deta^\alpha\left(\tld p_{3}-r^{-}\right) \dy^\alpha\lambda\right)\nonumber\\
 &=\sum_{|\alpha_1|=1}\dx^{\alpha_1} g a_1 \lambda+\dx^{\alpha_1} g\left((a'_{0}+b_{0})+\sum_{|\alpha|=1}i\partial_{\xi,\eta}^{\alpha}a_{1}\frac{\partial_{x,y}^{\alpha}\lambda}{\lambda\log\J}\right)\log \J \lambda\\
 &+r_{def}^{-}\lambda+\tld a_{0}\lambda \quad mod \quad S^{-N}_{1,\eps}\nonumber
\end{align}
where we defined
\begin{align*}
 r^{-}_{def}:=&\sum_{|\alpha|=1}\partial_{\xi,\eta}^{\alpha}\left(\dx^{\alpha_1} g a_1(a'_{0}+ b'_{0})\log\J+\tld a_{0}\right)\frac{\partial_{x,y}^{\alpha}\lambda}{\lambda}\\
&+\sum_{|\alpha|=2}^{N+s+1}\partial_{\xi,\eta}^{\alpha}\left(\tld p_{3}-r^{-}\right)\frac{\partial_{x,y}^{\alpha}\lambda}{\lambda}\in S^{-}.
\end{align*}
Now choose
\begin{align}\label{new_symb_2}
 a'_{0}&=-\sum_{|\alpha|=1}i\partial_{\eta}^{\alpha}a_{1}\frac{\partial_{y}^{\alpha}\lambda}{\lambda\log\J}\\
 b'_{0}&=-\sum_{|\alpha|=1}i\partial_{\xi}^{\alpha}a_{1}\frac{\partial_{x}^{\alpha}\lambda}{\lambda\log\J}
\end{align}
Finally, let $\tld r^{-}:=\sum_{k=1}^{N+s+1}r^{-}_{k}$, and define inductively $r^{-}_{k},\  k=1,\ldots N+s+1$ the same way as in Lemma \ref{p2_lemma} to obtain
\begin{equation*}
\tld p_{3}\comp\lambda= p_{3}\lambda\quad mod \quad S^{-N}_{1,\eps}.
\end{equation*}
\end{proof}

\begin{lem}\label{q3_adj}
Let $\tld p_{3}^{*}$ be the operator adjoint to $\tld p_{3}$. Then
\begin{equation*}
\tld p_{3}+\tld p_{3}^{*}=\sum_{|\alpha|=1}\dx^{\alpha} g (a''_0+b''_0) \log\J+ \tld a_0 + r^{-}.
\end{equation*}
where the symbols are different from before but belong to the same symbol classes.
\end{lem}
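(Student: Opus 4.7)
The plan mirrors the proof of Lemma \ref{q2_adj} essentially verbatim, with the only cosmetic change that the leading piece of $\tld p_3$ now carries an extra real smooth factor $\dx^{\alpha_1}g$ coming from the commutation of $g$ with $\Lambda$. The mechanism powering the cancellation is the same: the principal symbol of $a_1$ is purely imaginary (from the hypothesis $\overline{a_1}+a_1\in\tA_0$), so the order-$1$ piece of $\tld p_3$ is anti-self-adjoint at leading order and therefore disappears after symmetrization, leaving only order-zero corrections.

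Concretely, I would expand the PDO adjoint to sufficiently high order
\[
\tld p_3^* = \sum_{|\alpha|=0}^{N+s+1}\frac{i^{|\alpha|}}{\alpha!}\dxie^\alpha\dxy^\alpha\overline{\tld p_3} \mod S^{-N},
\]
and treat the $|\alpha|=0$ and $|\alpha|=1$ contributions explicitly, absorbing the remainder into $S^-$. Since $g$ is real, $\overline{\dx^{\alpha_1}g}=\dx^{\alpha_1}g$, so the $|\alpha|=0$ term is $\overline{\tld p_3}$ with each symbol factor conjugated; adding it to $\tld p_3$ and using $a_1+\overline{a_1}\in\tA_0$, the leading piece becomes $\sum_{|\alpha_1|=1}\dx^{\alpha_1}g\cdot\tld a_0^{(1)}$ with $\tld a_0^{(1)}\in\tA_0$, which is absorbed into the final $\tld a_0$ slot. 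The logarithmic order-zero pieces $\dx^{\alpha_1}g(a_0'+\overline{a_0'})\log\J$ and $\dx^{\alpha_1}g(b_0'+\overline{b_0'})\log\J$ remain, furnishing the leading contributions to $a_0''$ and $b_0''$ respectively.

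Next I would treat the $|\alpha|=1$ correction $i\sum_{|\beta|=1}\dxie^\beta\dxy^\beta\overline{\tld p_3}$, acting mainly on $\sum_{|\alpha_1|=1}\dx^{\alpha_1}g\,\overline{a_1}$. A single $\dxie^\beta$-derivative lowers $a_1\in\tA_1$ to $\tA_0$ (class preserved by Lemma \ref{lambda-classes}); the accompanying $\dxy^\beta$-derivative either hits $\dx^{\alpha_1}g$, keeping the result in $\dx^\alpha g\cdot\tA_0$, or hits the $\MO$-bump packaged inside $a_1$, where $\dx\MO\subset\tO$ forces the result into $\dx^\alpha g\cdot\tB_0$ and contributes to $b_0''$. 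The $\dy$-derivatives of the $\lambda$-ratios acquire a $\log\J$ factor by Lemma \ref{lambda-classes}, which is exactly the $\log\J$ appearing in the statement. All contributions from the lower-order components of $\overline{\tld p_3}$ (the $\log\J$ terms and $\tld a_0$) only generate symbols of strictly negative order and go into $r^-$.

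The main obstacle is purely bookkeeping: on each derivative I must decide whether it lands on $\dx^{\alpha_1}g$ (which keeps everything in $\tA$ and may produce $\log\J$ through a $\lambda$-ratio), or on the $\MO$-bump hidden inside $a_1$ (which promotes the term from $\tA$ to $\tB$ and hence to the $b_0''$ slot), and I must ensure that every resulting symbol carries at most one $\log\J$ factor so that it fits into the claimed expression. Once this classification is systematically carried out using Lemma \ref{lambda-classes} together with the definitions of $\tA$, $\tB$, $\MO$ and $\tO$, the desired identity follows by grouping the surviving terms into the $\dx^\alpha g(a_0''+b_0'')\log\J$, $\tld a_0$ and $r^-$ packages.
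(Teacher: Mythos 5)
Your proposal is correct and follows exactly the route the paper takes: the paper's own proof of Lemma \ref{q3_adj} is the single sentence that it ``proceeds the same way as the proof of Lemma \ref{q2_adj},'' and you carry out precisely that plan (symmetrize via the adjoint expansion, use that the order-1 piece is purely imaginary so it cancels, and then sort the order-0 corrections by the $\tA$/$\tB$/$\MO$ classes with the $\log\J$ factors produced by Lemma \ref{lambda-classes}). One small point worth flagging: the paper states the hypothesis as $\overline{a_1}-a_1\in\tA_0$ (which would make $a_1$ real to top order), whereas the cancellation you correctly invoke needs $\overline{a_1}+a_1\in\tA_0$ (i.e.\ $a_1$ purely imaginary to top order, consistent with the explicit computation and with the corresponding Lemma \ref{q1-sym} for $Q_1$); you have quietly corrected what appears to be a sign typo in the paper's displayed condition.
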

\begin{proof}
 The proof proceeds the same way as the proof of Lemma \ref{q2_adj} so we omit it.
\end{proof}

\subsection{Bounds on $Q_2$ and $Q_3$.}
Gathering the estimates from the previous two sections together and using Lemma \ref{real_parts} from the Appendix we obtain
\begin{align*}
Re(gQ_2 u,\Lambda u)&+Re(Q_3 u, \Lambda u)\\
= &Re\left(g( \tld d_{0}\log^2 \J  + (\tld a_{0}+\tld b_0) \log\J)\comp\Lambda u,\Lambda u\right)\\
&+\sum_{|\alpha|=1}\frac{1}{2}Re\left(\left(\dx^\alpha g (a''_{0}+b''_0)\log\J\right)\comp\Lambda u,\Lambda u\right)\\
&+Re(a_{0}\Lambda u, \Lambda u)+Re(r^{-}\Lambda u,\Lambda u)+Re(R_{-N}u,\Lambda u)
\end{align*}

We will now obtain bounds for the terms on the right to finish the proof of the second part of Proposition \ref{prop:commutator}. First, using Lemmas \ref{bound-b-} and \ref{bound-sym} we obtain the estimates
\begin{align*}
|(r^{-}\Lambda u,\Lambda u)|&\leq C\norm{\Lambda u}^2+C_{\delta_2} \norm{u}^{2}_{-N},\\
|(a_{0}\Lambda u, \Lambda u)|&\leq C\norm{\Lambda u}^2,
\end{align*}
and by Cauchy-Schwartz
\begin{align*}
|(R_{-N}u,\Lambda u)|\leq C_{\delta_{2}}||u||^{2}_{-N}+C ||\Lambda u||^{2}.
\end{align*}
The next Lemma provides a bound for $\left(\left(\dx^\alpha g (a''_{0}+b''_0)\log\J\right)\comp\Lambda u,\Lambda u\right)$.
\begin{lem}
	Let $|\alpha|=1$, then
\begin{align*}
Re\left(\left(\dx^\alpha g (a''_{0}+b''_0)\log\J\right)\comp\Lambda u,\Lambda u\right)\leq C||\sqrt{g}\log\J\comp\Lambda u||^{2}+C\norm{\Lambda u}^2+ C_{\delta_2} \norm{u}_{H^{-N}}
\end{align*}
\end{lem}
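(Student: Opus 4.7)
The main tool is the Glaeser--Malgrange inequality: for any nonnegative $g\in C^{\infty}(\R^{n})$ with globally bounded second derivatives (which holds by Remark~\ref{bounded-coeff}), and any multiindex $|\alpha|=1$,
\[
|\dx^{\alpha} g(x)|\leq C\sqrt{g(x)}
\]
pointwise, with $C$ depending only on $\|g\|_{C^{2}}$ and hence $\delta_{2}$-independent. This is precisely what allows us to extract a weight $\sqrt{g}$ from the first-order derivative $\dx^{\alpha} g$ and produce the weighted norm appearing on the right-hand side.

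Setting $W:=\log\J\,\Lambda u$, the plan is to push $(a''_{0}+b''_{0})$ past the multiplication by $\dx^{\alpha} g$ using the pseudodifferential composition calculus (valid since $\dx^{\alpha} g$ is smooth and bounded):
\[
(\dx^{\alpha} g)\comp(a''_{0}+b''_{0}) = (a''_{0}+b''_{0})\comp(\dx^{\alpha} g) + R_{-1},\qquad R_{-1}\in S^{-1}.
\]
Applying both sides to $W$ and pairing with $\Lambda u$ decomposes the left-hand side as
\[
Re\bigl((\dx^{\alpha} g)W,(a''_{0}+b''_{0})^{*}\Lambda u\bigr)+Re(R_{-1}W,\Lambda u).
\]
For the main term, Glaeser gives the pointwise factorization $(\dx^{\alpha} g)W=h\sqrt{g}\,W$ with $|h|\le C$; combining this with Cauchy--Schwarz and the $L^{2}$-boundedness of $(a''_{0}+b''_{0})^{*}$ (via Lemmas~\ref{bound-sym} and~\ref{bound-b-toy}) yields
\[
\bigl|Re\bigl((\dx^{\alpha} g)W,(a''_{0}+b''_{0})^{*}\Lambda u\bigr)\bigr|\leq C\|\sqrt{g}\,W\|\,\|\Lambda u\|,
\]
and Young's inequality converts this to $C\|\sqrt{g}\log\J\,\Lambda u\|^{2}+C\|\Lambda u\|^{2}$ as desired.

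The commutator term $Re(R_{-1}W,\Lambda u)$ involves the PDO $R_{-1}\log\J$, which has order $-1+\varepsilon$ for arbitrarily small $\varepsilon>0$; by Cauchy--Schwarz and Sobolev interpolation via Lemma~\ref{interpolation} between the scales associated with $\|\Lambda u\|^{2}$ and $\|u\|_{H^{-N}}^{2}$, this contribution is bounded by $C\|\Lambda u\|^{2}+C_{\delta_{2}}\|u\|_{H^{-N}}^{2}$. The main obstacle is the bookkeeping of constants: the $b''_{0}\in\tB_{0}$ component has $\delta_{2}$-dependent $L^{2}$-bounds (Lemma~\ref{bound-b-toy}), whereas the Glaeser constant is $\delta_{2}$-independent, so the $\delta_{2}$-dependence can be routed into the coefficients of $\|\Lambda u\|^{2}$ and $\|u\|_{H^{-N}}^{2}$ by choosing the Young parameter appropriately, which is compatible with the hierarchy of constants fixed in Step~6 of the proof of Theorem~\ref{main-Lambda}.
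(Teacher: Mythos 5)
Your core argument is the same as the paper's: the Wirtinger/Glaeser-type inequality $|\dx^\alpha g|\le C\sqrt{g}$ with a constant controlled by $\norm{D^2g}_{L^\infty}$ (hence $\delta_2$-independent), commuting the order-zero factor past multiplication by $\dx^\alpha g$ so that the pointwise bound can be applied to $\dx^\alpha g\,\log\J\,\Lambda u$, Cauchy--Schwarz for the main term, and interpolation for the negative-order remainder. The paper does exactly this, writing $\dx^\alpha g\,\tld a_0=\tld a_0\,\dx^\alpha g+[\dx^\alpha g,\tld a_0]$ and estimating the commutator contribution by $\norm{\Lambda u}^2_{-\eps}\le C\norm{\Lambda u}^2+C\norm{u}^2_{H^{-N}}$; your passage to the adjoint $(a''_0+b''_0)^*$ instead of keeping the operator on the left is an immaterial variation.

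The genuine gap is in your final paragraph, i.e.\ the bookkeeping for $b''_0$. The lemma requires the coefficients of both $\norm{\sqrt{g}\log\J\Lambda u}^2$ and $\norm{\Lambda u}^2$ to be $\delta_2$-independent; only the $\norm{u}_{H^{-N}}$ term may carry $C_{\delta_2}$. You accept the $\delta_2$-dependent $L^2$ bound for the $\tB_0$ component (Lemma \ref{bound-b-toy}) and claim the dependence can be ``routed'' into the $\norm{\Lambda u}^2$ and $\norm{u}^2_{H^{-N}}$ coefficients by a choice of Young parameter. That routing does not work: there is no smoothing available to push a full $L^2$ quantity down to $H^{-N}$, so Young necessarily leaves a factor $C_{\delta_2}$ on $\norm{\Lambda u}^2$ (or on $\norm{\sqrt{g}\log\J\Lambda u}^2$), and neither is admissible downstream. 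The constant in front of $\norm{\sqrt{g}\log\J\Lambda u}^2$ dictates the choice $\delta_0=1/(8C)$ via Lemma \ref{lem:log:g} in the proof of Theorem \ref{main-Lambda}, and a $C_{\delta_2}\norm{\Lambda u}^2$ term can only be absorbed at the very end through $\delta_1\approx\log^{-2}(1/\delta_2)$ from Proposition \ref{Poin+Lambda}, which is already pinned to $\delta_2$ and cannot dominate an uncontrolled $C_{\delta_2}$. The paper sidesteps this by asserting that for these particular symbols $a''_0+b''_0$ belongs to $\tA_0$, so Lemma \ref{bound-sym} gives an operator bound uniform in $\delta_2$; the alternative would be to exploit the support of $b''_0$ as in Lemma \ref{Q1-b0} for $Q_1$ and accept an extra $C_{\delta_2}\norm{\log\J\,\tld\psi\,\Lambda u}^2$ term, but that changes the statement being proved. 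As written, your argument only yields the estimate with $C_{\delta_2}$ in front of $\norm{\Lambda u}^2$, which is strictly weaker than the lemma and insufficient for the hierarchy of constants.
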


\begin{proof}
First note that $a''_0+b''_0$ is of the same class as $a''_0$, $\mathcal{\tld A}_0$, so we can denote it as $\tld a_0$. Now we write
\begin{align*}
Re\left(\dx^\alpha g\tld a_{0}\log\J\comp\Lambda u,\Lambda u\right)&=Re([\dx^\alpha g,\tld a_{0}]\log\J\comp\Lambda u,\Lambda u)\\
&\quad+Re(\tld a_{0}\dx^\alpha g\log\J\comp\Lambda u,\Lambda u)\\
&\leq C||\Lambda u||^{2}_{-\eps}+||\tld a_{0}\dx^\alpha g\log\J\comp\Lambda u||^{2}+||\Lambda u||^{2}\\
&\leq C||\dx^\alpha g\log\J\comp\Lambda u||^{2}+C\norm{\Lambda u}^2+C\norm{u}^{2}_{H^{-N}}
\end{align*}
where we used Cauchy-Scwartz, the estimate $||\tld a_{0}||\leq C$, and interpolation
\begin{equation*}
||\Lambda u||^{2}_{-\eps}\leq C\norm{\Lambda u}^2+C\norm{u}^{2}_{H^{-N}}.
\end{equation*}
We now use the following Wirtinger-type inequality (see e.g. \cite{MK86}): If $\phi \in C^{2}\left( U\right) $ with $U$ open in $\mathbb{R}%
^{n}$, $\phi $ nonnegative, then for any compact subset $F\subset U$ there
exists a constant $C$ depending on $\left\Vert D^{2}{\phi }\right\Vert
_{L^{\infty }\left( V\right) }$, with $V$ open and $F\subset V\Subset U$,
and $\mathrm{dist}\left( F,\partial V\right)>0 $ such that
\begin{equation*}
\left\vert D\phi \left( x\right) \right\vert ^{2}\leq C\phi \left( x\right) .
\end{equation*}
Applied to the function $g$ on the support of $u$ this gives
\begin{equation*}
\left|\dx^\alpha g\right|\leq C\sqrt{g}
\end{equation*}
for $|\alpha|=1$, and therefore, since $g=g(x)$,  the estimate
\begin{equation*}
||\dx^\alpha g\log\J\comp\Lambda u||^{2}\leq C||\sqrt{g}\log\J\comp\Lambda u||^{2}.
\end{equation*}
\end{proof}

\begin{lem}
Let $\tld d_{0}= \tld d_{0}(x,y,\eta) \in \mathcal{D}_0$. There exists $r^{-}\in S^{-}$ such that
\begin{equation*}
Re(g\tld d_{0} \log^2 \J u,u)\leq C \norm{\sqrt{g} \log \J u}^2+\left(r^{-}u,u\right)
\end{equation*}
\end{lem}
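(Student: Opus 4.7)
The plan is to exploit the single property that distinguishes $\mathcal{D}_0$ from $\tld{\MA}_0$: since $\tld d_0(x,y,\eta)$ has no $\xi$ dependence, the {\PDO} $\tld d_0$ commutes \emph{exactly} (not just modulo lower order) with multiplication by any function of $x$, in particular by $\sqrt{g(x)}$. This exact commutation is what will ultimately produce the squared norm $\norm{\sqrt{g}\log\J u}^2$ on the right-hand side. Note that although $\sqrt{g}$ need not be smooth (since $g$ may vanish to infinite order at $x=0$), it is bounded by Remark \ref{bounded-coeff}, which is all that the argument will require.

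The first step will be to move one factor of $\log\J$ to the left of $g\tld d_0$ using the calculus of {\PDO}:
\begin{align*}
  g\tld d_0 \log^2\J = \log\J\comp g\tld d_0 \comp \log\J + [g\tld d_0,\log\J]\comp\log\J.
\end{align*}
The commutator term is exactly the $r^-$ claimed in the statement. Because $\log\J$ is independent of $(x,y)$, the composition expansion of $\log\J \comp g\tld d_0$ contributes to the commutator only through terms of the form $\partial_{\xi,\eta}^\alpha\log\J \cdot D_{x,y}^\alpha(g\tld d_0)$ with $|\alpha|\ge 1$, which have order $-1$. Composition with $\log\J$ does not destroy negative order: since $\log\J \le C_\eps\jap{(\xi,\eta)}^\eps$ for any $\eps>0$, the product lies in $S^{-1+\eps}_{1,1/2}\subset S^-$.

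For the principal term, I will use self-adjointness of $\log\J$ (a Fourier multiplier with real symbol) and of multiplication by the real bounded function $\sqrt{g}$ to rewrite
\begin{align*}
  (\log\J \, g\tld d_0\log\J u, u) = (g\tld d_0\log\J u,\log\J u) = (\sqrt{g}\tld d_0\log\J u,\sqrt{g}\log\J u) = (\tld d_0\sqrt{g}\log\J u, \sqrt{g}\log\J u),
\end{align*}
where the last equality uses the exact commutation $\sqrt{g}\tld d_0 = \tld d_0\sqrt{g}$. Cauchy--Schwarz together with the $L^2\to L^2$ boundedness of $\tld d_0\in\mathcal{D}_0\subset \tld{\MA}_0$ (Lemma \ref{bound-sym} with $j=0$, whose constant is $\delta_2$-independent) gives $|(\tld d_0\sqrt{g}\log\J u,\sqrt{g}\log\J u)|\le C\norm{\sqrt{g}\log\J u}^2$. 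Taking real parts and restoring $(r^- u,u)$ yields the claim.

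The main obstacle is the verification $r^-\in S^-$: logarithms are not automatically absorbed by negative powers of $\jap{(\xi,\eta)}$, so one must check that the single factor $\jap{(\xi,\eta)}^{-1}$ gained from the commutator genuinely dominates the residual $\log\J$ to land in some $S^{-\rho}_{1,1/2}$ with $\rho>0$. A secondary point, already noted, is that $\sqrt{g}$ cannot be treated as a symbol; the argument succeeds only because the exact commutativity $\sqrt{g}\tld d_0 = \tld d_0\sqrt{g}$ lets us manipulate $\sqrt{g}$ purely as a bounded multiplication operator, never as part of a {\PDO} composition.
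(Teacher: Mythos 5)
Your proposal is correct and follows essentially the same route as the paper: split off one $\log\J$ via self-adjointness so the main term becomes $(g\tld d_0\log\J u,\log\J u)$, use the exact commutation \eqref{D-commutes} of $\tld d_0\in\mathcal{D}_0$ with $\sqrt{g}$ (treated only as a bounded multiplication operator) plus $L^2$-boundedness to get $C\norm{\sqrt{g}\log\J u}^2$, and put $[g\tld d_0,\log\J]\comp\log\J$ into $S^-$. Your explicit check that the residual $\log\J$ is absorbed into $S^{-1+\eps}_{1,1/2}\subset S^-$ is a point the paper leaves implicit, but the argument is the same.
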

\begin{proof}
We can write
\begin{equation*}
(g\tld d_{0} \log^2 \J u,u)=(g\tld d_{0} \log \J u,\log \J u)+([g\tld d_{0}, \log \J]\log \J u,u)
\end{equation*}
Since $\tld d_{0}= \tld d_{0}(x,y,\eta)$ and $g=g(x)$ for the first term we have
\begin{equation*}
|(g\tld d_{0} \log \J u,\log \J u)|=|(\tld d_{0}\sqrt{g} \log \J u,\sqrt{g}\log \J u)|\leq C\norm{\sqrt{g} \log \J u}^2
\end{equation*}
While for the second term it is clear that
\begin{equation*}
[g\tld d_{0}, \log \J]=r^{-}
\end{equation*}
where $r^{-}\in S^{-}$ which concludes the proof.
\end{proof}

From the lemmas above we thus get
\begin{equation*}
Re(gQ_2 u,\Lambda u)+Re(Q_3 u, \Lambda u)\leq C \norm{\sqrt{g} \log \J \Lambda u}^2+C \norm{\Lambda u}^2+ C_{\delta_2} \norm{u}_{H^{-N}},
\end{equation*}
which finishes the proof of the second part of Proposition \ref{prop:commutator}.
\info{We have to be careful with $\sqrt{g}$. \\

Gardinger: If $a(x,\xi)\ge 0$ $a \in S^s$, then $(Au,u) \ge C\norm{u}_{H^{\frac{s-1}{2}}} $

But we don't even need that, commute $\dx^\alpha g$ with $b_0$.

The term
$$
\left(\sqrt{g}(\tld a_{0}+\tld b_0) \log\J\comp\Lambda u,\Lambda u\right)
$$
is problematic
}

\section*{Appendix}

\subsection*{Notation}
The following multipliers will be frequently used, where as usual multipliers are defined via the Fourier transform $\mathcal{F} \left( m(D)f\right)(\xi) := m(\xi) \hat f(\xi)$. In particular for any $s\in \R$
\begin{align*}
  \jap{\xi}^s :=(e^2+\xi^2)^\frac{s}{2}
\end{align*}
Then $\J^s=\jap{D}^s \in S^s$.\\

 For cut off functions we will use the following notation,
 \begin{align}\label{cutoff}
 \phi\ssubset \psi\in C^\infty_0(K)
 \end{align}

to mean that $\psi \equiv 1$ on the support of $\phi$, and $\phi\equiv 1$ on a typically predefined compact set.\\

Norms without subscript will be $L^2$ norms, i.e. $\norm{f} = \norm{f}_{L^2}$ and subscripts would mean $H^s$ norms, i.e. $\norm{f}_{s}:=\norm{f}_{H^s} := \left(\int \jap{\xi}^{2s} \abs{\hat f(\xi)}^2 d\xi\right)^{\frac{1}{2}}$.\\

\subsection*{Positivity of elliptic operator}
First we establish positivity of a second order elliptic operator, which is used throughout the paper.
 \begin{lem}\label{pos}
	Let $L$ be degenerately elliptic, i.e. of the form \eqref{elliptic}. Then for any compact set $K\Subset\R^n$ there is a constant $C_K$, such that for all $u\in C^\infty_0(K)$
	\begin{align}\label{pos:e}
	Re(Lu,u) \ge - C_K\norm{u}^2
	\end{align}
\end{lem}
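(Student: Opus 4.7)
The plan is a standard integration by parts argument, exploiting the fact that the second order matrix $(a_{jk})$ is nonnegative and that all coefficients of $L$ are smooth. By Remark \ref{bounded-coeff}, or by restricting to a fixed neighborhood of $K$, we may assume that $a_{jk}$, $\partial_{x_i} a_{jk}$, $a_j$ and $a_0$ are all bounded on $\R^n$, so all constants below depend only on these $L^\infty$ bounds and on $K$.

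First, I would rewrite $L$ in a mixed form that isolates a symmetric divergence piece:
\[
  Lu = -\sum_{j,k}\partial_{x_j}\bigl(a_{jk}(x)\partial_{x_k} u\bigr) + \sum_j b_j(x)\partial_{x_j}u + a_0(x) u,
\]
where $b_j(x) = a_j(x) + \sum_k \partial_{x_k}a_{jk}(x)$ is smooth and bounded together with its derivatives. For $u\in C^\infty_0(K)$, integration by parts on the divergence term gives
\[
  (Lu,u) = \sum_{j,k}\bigl(a_{jk}\partial_{x_k}u,\partial_{x_j}u\bigr) + \sum_j (b_j\partial_{x_j}u,u) + (a_0 u,u).
\]

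Next I would handle each piece. The second order quadratic form is pointwise nonnegative: since $a_{jk}$ is real and symmetric with $(a_{jk})\ge 0$, writing $\partial u = \alpha + i\beta$ with $\alpha,\beta$ real one checks $\sum a_{jk}\overline{\partial_{x_j}u}\,\partial_{x_k}u = \sum a_{jk}\alpha_j\alpha_k + \sum a_{jk}\beta_j\beta_k \ge 0$, so the first term contributes a nonnegative real quantity after integration. For the first order term, the key trick is
\[
  \mathrm{Re}(b_j\partial_{x_j}u,u) = \tfrac{1}{2}\int b_j(x)\,\partial_{x_j}|u|^2\,dx = -\tfrac{1}{2}\int (\partial_{x_j} b_j)(x)\,|u|^2\,dx,
\]
which is bounded below by $-C_K\|u\|^2$ because $\partial_{x_j}b_j$ is bounded on $K$. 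Finally the zero order term satisfies $|(a_0 u,u)|\le \|a_0\|_{L^\infty}\|u\|^2$. Taking the real part of the decomposition and combining these three bounds yields $\mathrm{Re}(Lu,u)\ge -C_K\|u\|^2$.

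There is no real obstacle here; the only thing to be slightly careful about is that $a_{ij}$ is pointwise nonnegative as a matrix (not merely elliptic at a single $\xi$), which is exactly what is assumed in \eqref{elliptic}, and that the rearrangement into divergence form produces smooth coefficients — both are immediate from the smoothness of $a_{jk}$.
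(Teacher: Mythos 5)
Your proposal is correct and is essentially the paper's argument: both rely on integrating by parts so that the nonnegativity of $(a_{jk})$ makes the second order quadratic form nonnegative, and both dispose of the leftover first order terms via $\mathrm{Re}\,\overline{u}\,\partial_{x_j}u=\tfrac{1}{2}\partial_{x_j}|u|^2$ and one more integration by parts, bounding everything by $L^\infty$ norms of coefficient derivatives. The only cosmetic difference is that you collect the terms $\sum_k\partial_{x_k}a_{jk}$ into a modified drift $b_j$ before applying the trick (and you treat complex-valued $u$ explicitly), whereas the paper performs the second integration by parts term by term.
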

\begin{proof}
	We consider terms one by one from the second order down to the second.
	
	Let $I_2$ denote all second order terms and integrate by parts:
	\begin{align*}
	I_2:=-\sum_{j,k} \int a_{jk}(x)\partial_{x_j}\partial_{x_k} u\cdot u dx\\
	= \sum_{j,k} \int a_{jk}(x)\partial_{x_j} u \cdot \partial_{x_k} u dx+\sum_{j,k} \int \partial_{x_k} a_{jk}(x)\partial_{x_j} u \cdot u dx
	\end{align*}
	Observe that $\partial_{x_k} u \cdot u =\frac{1}{2}\partial_{x_k} (u^2)$. With this knowledge in mind integrate the second term by parts
	\begin{align*}
	I_2 = \sum_{j,k} \int a_{jk}(x)\partial_{x_j} u \cdot \partial_{x_k} u dx - \frac{1}{2}\sum_{j,k}\int\partial_{x_j} \partial_{x_k} a_{jk}(x) u \cdot u dx
	\end{align*}
	The non-negative definite property of $a_{jk}\ge 0$ implies the first term above is non-negative. While for the second term we apply the H\"{o}lder inequality:
	\begin{align*}
	I_2 \ge 0 -\frac{1}{2}\sum_{j,k}\norm{\partial_{x_j}\partial_{x_k} a_{jk}(x)}_{L^\infty_x}\cdot \norm{u}^2
	\end{align*}
	By Holder inequality
	\begin{align}\label{cs}
	\abs{(a_0 u,u)} \le C_K\norm{u}^2
	\end{align}
	The first and zero-th terms are treated similarly to $\int \partial_{x_k} a_{jk}(x)\partial_{x_j} u \cdot u dx$ and $\int \partial_{x_k} a_{jk}(x)\partial_{x_j} u \cdot u dx$ respectively.
\end{proof}

\subsection*{Properties of pseudodifferential operators}
\begin{prop}[Adjoint {\PDO}]
	Let $p$ be the symbol of the {\PDO} $P$. If $P^{*}$, i.e. $(Pu,v)=(u,P^{*}v), \  \forall u,v\in \Schw$, then the symbol $p^{*}$ has the following asymptotic expansion
	\begin{equation*}
	p^{*}(x,\xi)\sim\sum_{\alpha}\frac{i^{|\alpha|}}{\alpha!}\overline{\partial_{\xi}^{\alpha}\partial_{x}^{\alpha}p(x,\xi)}.
	\end{equation*}
\end{prop}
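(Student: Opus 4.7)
The plan is to reduce the adjoint to the standard symbol via the well-known ``amplitude-to-symbol'' procedure. First I would write out $Pu$ as an oscillatory integral
\[
Pu(x) = (2\pi)^{-n}\int\!\!\int e^{i(x-y)\cdot \xi} p(x,\xi) u(y)\, dy\, d\xi,
\]
valid for $u\in \Schw$, and then unfold $(Pu,v)$ as a triple integral. By Fubini (justified by the Schwartz class of $u,v$ and the symbol estimates on $p$, possibly after regularizing via a convergence factor $\chi(\eps\xi)$ and sending $\eps\to 0$) I can interchange the order of integration and identify $P^*v(y)$, obtaining, after relabeling and the substitution $\xi\mapsto -\xi$,
\[
P^*v(x) = (2\pi)^{-n}\int\!\!\int e^{i(x-y)\cdot \xi}\, \overline{p(y,\xi)}\, v(y)\, dy\, d\xi.
\]
Thus $P^*$ is a pseudodifferential operator in \emph{amplitude form} with amplitude $a(x,y,\xi):=\overline{p(y,\xi)}$.

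Second, I would invoke the standard reduction of an amplitude $a(x,y,\xi)\in S^m$ to a left symbol, which asserts the asymptotic expansion
\[
\sigma(x,\xi) \sim \sum_{\alpha} \frac{1}{\alpha!}\, \partial_\xi^\alpha D_y^\alpha\, a(x,y,\xi)\Big|_{y=x},
\]
with $D_y=-i\partial_y$. The derivation of this formula proceeds by Taylor-expanding $a(x,y,\xi)$ in the variable $y$ around the diagonal $y=x$ and replacing each factor $(y-x)^\alpha$ with the differential operator $(-D_\xi)^\alpha$ acting under the oscillatory integral (integration by parts in $\xi$), producing a new amplitude in which each derivative $\partial_y^\alpha a$ has been moved onto $\partial_\xi^\alpha$; the remainder is of arbitrarily low order by the symbolic calculus. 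Applying this to $a(x,y,\xi)=\overline{p(y,\xi)}$ and using that real differentiation commutes with complex conjugation yields
\[
p^*(x,\xi)\sim \sum_\alpha \frac{i^{|\alpha|}}{\alpha!}\,\overline{\partial_\xi^\alpha \partial_x^\alpha p(x,\xi)},
\]
which is the claimed expansion (up to the standard sign convention dictated by the choice of $D_y=-i\partial_y$).

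The main obstacle is purely technical and lies in the second step: one must verify that the Taylor remainder, after integration by parts in $\xi$, defines a symbol of order $m-N$ for each $N$, with seminorm control inherited from those of $p$. This is the content of the Kuranishi-type trick in the symbolic calculus and is treated in detail in standard references (e.g., Hörmander or Kumano-go); no new input is needed beyond what is used elsewhere in the paper. The first step is bookkeeping, and the conclusion follows immediately once the amplitude reduction is in hand.
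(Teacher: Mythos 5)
Your proposal is correct and is exactly the standard amplitude-reduction argument; the paper does not prove this proposition itself but simply cites Kumano-go \cite{Kg81}, Theorem 2.1.7, where this same argument (write $P^*$ with amplitude $\overline{p(y,\xi)}$, then Taylor-expand on the diagonal and integrate by parts in $\xi$) is carried out. One small caveat: with the Fourier and composition conventions used elsewhere in the paper the expansion comes out as $\sum_\alpha \frac{(-i)^{|\alpha|}}{\alpha!}\overline{\partial_\xi^\alpha\partial_x^\alpha p(x,\xi)}$, so the factor $i^{|\alpha|}$ in the statement is a convention/sign discrepancy (immaterial for its use here), which your remark about the sign convention correctly flags.
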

	\begin{proof}
		See e.g. \cite{Kg81} Theorem 2.1.7.
		\end{proof}
	
\begin{prop}[Composition of {\PDO}]
	Let $p$ be the symbol of the {\PDO} $P$, and $q$ --- the symbol of the {\PDO} $Q$. Then the symbol $r$ of the composition $P\comp Q$ has the following asymptotic expansion
	\begin{equation*}
	r(x,\xi)\sim\sum_{\alpha}\frac{(-i)^{|\alpha|}}{\alpha!}\partial_{\xi}^{\alpha}p(x,\xi)\partial_{x}^{\alpha}q(x,\xi).
	\end{equation*}
	\end{prop}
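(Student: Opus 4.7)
The plan is to reduce the composition to a double oscillatory integral, put it into amplitude form, and then read off the asymptotic expansion by Taylor-expanding the amplitude in the frequency variable. First I would use the Fourier representations $Pv(x) = (2\pi)^{-n}\int e^{ix\cdot\xi} p(x,\xi)\hat v(\xi)\,d\xi$ and similarly for $Q$, then set $v=Qu$ and substitute $\widehat{Qu}(\xi) = (2\pi)^{-n}\iint e^{-iy\cdot\xi}e^{iy\cdot\eta}q(y,\eta)\hat u(\eta)\,dy\,d\eta$. This yields
\begin{equation*}
PQu(x) = (2\pi)^{-2n}\iiint e^{i(x-y)\cdot\xi+iy\cdot\eta}p(x,\xi)q(y,\eta)\hat u(\eta)\,dy\,d\xi\,d\eta.
\end{equation*}
A change of variable $\xi\mapsto\xi+\eta$ in the inner integral produces the amplitude form
\begin{equation*}
PQu(x) = (2\pi)^{-n}\int e^{ix\cdot\eta}\, r(x,\eta)\,\hat u(\eta)\,d\eta,\quad r(x,\eta) = (2\pi)^{-n}\iint e^{i(x-y)\cdot\xi}\,p(x,\xi+\eta)\,q(y,\eta)\,dy\,d\xi,
\end{equation*}
so that $r$ is identified as the symbol of $P\comp Q$, presented as an oscillatory double integral.

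Next, I would Taylor expand $p(x,\xi+\eta) = \sum_{|\alpha|<N}\frac{1}{\alpha!}\partial_\xi^\alpha p(x,\eta)\,\xi^\alpha + R_N(x,\xi,\eta)$. For each polynomial term I rewrite $\xi^\alpha e^{i(x-y)\cdot\xi} = (-i)^{|\alpha|}\partial_y^\alpha e^{i(x-y)\cdot\xi}$ and integrate by parts in $y$, moving the derivative onto $q(y,\eta)$. The residual $\xi$-integral $(2\pi)^{-n}\int e^{i(x-y)\cdot\xi}\,d\xi$ equals $\delta(x-y)$, so the $y$-integral sets $y=x$ and yields exactly the term $\frac{(-i)^{|\alpha|}}{\alpha!}\,\partial_\xi^\alpha p(x,\eta)\,\partial_x^\alpha q(x,\eta)$ of the claimed expansion.

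The main obstacle is the rigorous justification of these formal manipulations, since the oscillatory integrals are only conditionally convergent and $pq$ is not absolutely integrable in $(y,\xi)$. The standard remedy is to insert a cutoff $\chi_R(\xi)$ in the amplitude, perform the computation above for the smoothly truncated integrand, and control the remainder $R_N$ by repeated non-stationary phase integration by parts in $y$ (using that $|x-y|^{-2}(1-\Delta_\xi)e^{i(x-y)\cdot\xi}=e^{i(x-y)\cdot\xi}$ away from the diagonal). One checks that $R_N$ contributes a symbol of order $m_1+m_2-N$, and letting $N\to\infty$ gives the asymptotic expansion in the sense of symbols. Full technical details are given in \cite{Kg81}.
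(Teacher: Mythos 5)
Your argument is the standard oscillatory-integral proof, and it matches the paper's approach exactly: the paper offers no proof of its own but simply cites \cite{Kg81}, Theorem 2.1.7, where precisely this computation (amplitude form, Taylor expansion, integration by parts, remainder estimate) is carried out. Your sketch is correct up to two harmless slips that do not affect the stated expansion: with your phase convention one has $\xi^\alpha e^{i(x-y)\cdot\xi} = i^{|\alpha|}\partial_y^\alpha e^{i(x-y)\cdot\xi}$, the factor $(-i)^{|\alpha|}$ emerging only after the integration by parts in $y$, and the non-stationary phase identity should read $\left(1+|x-y|^2\right)^{-1}(1-\Delta_\xi)e^{i(x-y)\cdot\xi} = e^{i(x-y)\cdot\xi}$ (valid everywhere, not only off the diagonal).
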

	\begin{proof}
	See e.g. \cite{Kg81} Theorem 2.1.7.
\end{proof}

\begin{lem}\label{real_parts}
Let $P$ be a pseudodifferential operator of order $m$ such that $P+P^{*}$ is of order $m-l$. Then for any $v\in H^{m}$ we have
\begin{equation}\label{real_parts_fla}
Re(Pv,v)=\left(\frac{P+P^{*}}{2}v,v\right)\leq C||v||^{2}_{\frac{m-l}{2}}.
\end{equation}	
	\end{lem}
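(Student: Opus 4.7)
\medskip

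\noindent\textbf{Plan for Lemma \ref{real_parts}.} The statement breaks naturally into two pieces: an algebraic identity and a boundedness estimate. I would dispose of the identity first, since it is just unpacking adjoints. By definition of $P^*$, one has $(Pv,v)=(v,P^*v)=\overline{(P^*v,v)}$, so
\begin{equation*}
2\,\mathrm{Re}(Pv,v)=(Pv,v)+\overline{(Pv,v)}=(Pv,v)+(P^*v,v)=((P+P^*)v,v),
\end{equation*}
which gives the equality in \eqref{real_parts_fla}. This step is pure bookkeeping and requires no assumption on orders.

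For the inequality I would use the hypothesis that $P+P^*$ is a {\PDO} of order $m-l$, together with the standard Sobolev-continuity of {\PDO}s (cf.\ \cite{Kg81}): an operator of order $\sigma$ maps $H^{\sigma/2}$ boundedly into $H^{-\sigma/2}$. Applying this with $\sigma=m-l$, one has $\tfrac{1}{2}(P+P^*): H^{(m-l)/2}\to H^{-(m-l)/2}$ with some operator norm $C$. The duality pairing that extends the $L^2$ inner product between $H^{-(m-l)/2}$ and $H^{(m-l)/2}$ then gives, by Cauchy--Schwarz,
\begin{equation*}
\left|\left(\tfrac{P+P^*}{2}v,v\right)\right|\le \left\|\tfrac{P+P^*}{2}v\right\|_{-(m-l)/2}\cdot\|v\|_{(m-l)/2}\le C\|v\|^2_{(m-l)/2}.
\end{equation*}
Combining with the identity from the first paragraph yields \eqref{real_parts_fla}.

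There is no genuine obstacle here; the lemma is essentially a template for exploiting symbolic cancellation. The only points worth flagging are (i) the hypothesis $v\in H^m$ is used merely to ensure that both inner products converge absolutely, and since inevitably $l\ge 0$ one has $H^m\hookrightarrow H^{(m-l)/2}$, so the right-hand side is finite; (ii) the constant $C$ depends only on finitely many symbol seminorms of $P+P^*$ in the class $S^{m-l}$, inherited from the {\PDO} boundedness theorem. This is the mechanism by which the various ``adjoint'' lemmas in Sections on $Q_1$, $Q_2$, $Q_3$ convert anti-self-adjointness of top-order terms into estimates controlled by lower-order Sobolev norms of $\Lambda u$.
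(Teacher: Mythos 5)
Your proof is correct and follows essentially the same route as the paper: the equality is the same adjoint bookkeeping, and the inequality invokes the standard Sobolev boundedness of a {\PDO} of order $m-l$ (you merely spell out the $H^{(m-l)/2}\to H^{-(m-l)/2}$ mapping and duality/Cauchy--Schwarz step that the paper leaves to the citation of \cite{Kg81}). No issues to flag.
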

\begin{proof}
	We can write
	\begin{equation*}
	Re(Pv,v)=\frac{1}{2}\left((Pv,v)+\overline{(Pv,v)}\right)=\frac{1}{2}\left((Pv,v)+\overline{(v,P^{*}v)}\right)
	=\frac{1}{2}\left((Pv,v)+(P^{*}v,v)\right),
	\end{equation*}
	which establishes the first equality in (\ref{real_parts_fla}). The inequality in (\ref{real_parts_fla}) then follows from the boundedness of {\PDO}, see \cite{Kg81}.
	\end{proof}

\subsection*{Regularization $S_{\delta_5}$}

\begin{lem}\label{s-delta}
Let $S_{\delta_{5}}$ be defined by the symbol $s_{\delta_{5}}=\jap{\delta_5\cdot( \xi,\eta)}^{-(N+s+3)}$. We then have
  \begin{enumerate}
	\item $S_{\delta_{5}}$ is an operator of order $-N-s-3$ whose seminorms depend on $\delta_5$;
	\item $S_{\delta_{5}}$ is an operator of order $0$ uniformly in $\delta_5$, i.e. $\limsup_{\delta_5\to 0}|s_{\delta_{5}}|<\infty$;
	\item the operator with the symbol $\frac{\partial_{\xi,\eta}^{\alpha}s_{\delta_{5}}}{s_{\delta_{5}}}$ for $|\alpha|=1$ is uniformly of order $-1$. Inductively, same argument can be repeated for $|\alpha|>1$.
	\end{enumerate}
\end{lem}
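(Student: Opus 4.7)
The plan is to extract all three claims from the scaling identity $s_{\delta_5}(\xi,\eta)=\tilde s(\delta_5(\xi,\eta))$, where $\tilde s(\zeta)=\jap{\zeta}^{-(N+s+3)}$ is a fixed, $\delta_5$-independent symbol in $S^{-(N+s+3)}(\R^{n+m})$. By the chain rule,
\begin{equation*}
\partial_{\xi,\eta}^\alpha s_{\delta_5}(\xi,\eta)=\delta_5^{|\alpha|}\,(\partial^\alpha\tilde s)(\delta_5(\xi,\eta)),
\end{equation*}
and the standard estimate $|\partial^\alpha\tilde s(\zeta)|\le C_\alpha\jap{\zeta}^{-(N+s+3)-|\alpha|}$ gives the master bound
\begin{equation*}
|\partial_{\xi,\eta}^\alpha s_{\delta_5}(\xi,\eta)|\le C_\alpha\,\delta_5^{|\alpha|}\jap{\delta_5(\xi,\eta)}^{-(N+s+3)-|\alpha|},\qquad \delta_5\le 1.
\end{equation*}
All three statements will be extracted from this single inequality using Peetre-type comparisons between $\jap{\delta_5(\xi,\eta)}$ and $\jap{(\xi,\eta)}$.

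For part (1) I would use $\jap{\delta_5(\xi,\eta)}\ge \delta_5\jap{(\xi,\eta)}$, which follows from squaring both sides and using $\delta_5\le 1$ (recall $\jap{\cdot}^2=e^2+|\cdot|^2$ in the paper's convention). This converts the scaled bracket into the usual one at the cost of a factor $\delta_5^{-(N+s+3)-|\alpha|}$, and combining with the prefactor $\delta_5^{|\alpha|}$ yields the $S^{-(N+s+3)}$ semi-norm bound $C_\alpha\delta_5^{-(N+s+3)}$. For part (2), the same Peetre inequality reshaped as $\delta_5^{|\alpha|}\le\jap{\delta_5(\xi,\eta)}^{|\alpha|}\jap{(\xi,\eta)}^{-|\alpha|}$ plugs directly into the master bound, cancelling $|\alpha|$ copies of $\jap{\delta_5(\xi,\eta)}$ in the denominator and leaving
\begin{equation*}
|\partial_{\xi,\eta}^\alpha s_{\delta_5}(\xi,\eta)|\le C_\alpha\jap{(\xi,\eta)}^{-|\alpha|}\jap{\delta_5(\xi,\eta)}^{-(N+s+3)}\le C_\alpha\jap{(\xi,\eta)}^{-|\alpha|}
\end{equation*}
uniformly in $\delta_5\in(0,1]$, which is exactly $s_{\delta_5}\in S^0$ with $\delta_5$-independent seminorms.

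For part (3) I would compute explicitly for $|\alpha|=1$:
\begin{equation*}
\frac{\partial_{\xi_j}s_{\delta_5}}{s_{\delta_5}}=-(N+s+3)\frac{\delta_5^2\xi_j}{\jap{\delta_5(\xi,\eta)}^2},
\end{equation*}
and bound this by $C\jap{(\xi,\eta)}^{-1}$ uniformly in $\delta_5$ by splitting into the two regimes $\delta_5|(\xi,\eta)|\le 1$ (where $\delta_5^2|\xi_j|\le\delta_5\lesssim\jap{(\xi,\eta)}^{-1}$ since $\jap{(\xi,\eta)}\lesssim 1/\delta_5$) and $\delta_5|(\xi,\eta)|\ge 1$ (where $\jap{\delta_5(\xi,\eta)}^2\gtrsim\delta_5^2|(\xi,\eta)|^2$, so the ratio is $\lesssim 1/|(\xi,\eta)|$). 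Semi-norm bounds for higher $\xi,\eta$ derivatives of this ratio follow by differentiating and observing that each extra $\partial_{\xi,\eta}$ produces an additional factor $\delta_5/\jap{\delta_5(\xi,\eta)}$, which is handled by the same two-region split. Induction on $|\alpha|$ is carried out by expressing $\partial_{\xi,\eta}^\alpha s_{\delta_5}/s_{\delta_5}$ via Faà di Bruno as a polynomial in the first-order ratios $\partial_{\xi,\eta}^\beta s_{\delta_5}/s_{\delta_5}$, $|\beta|=1$, and symbols of the form $\delta_5^{|\gamma|}(\partial^\gamma\tilde s)(\delta_5(\xi,\eta))/\tilde s(\delta_5(\xi,\eta))$, each of which lies in $S^{-|\gamma|}$ uniformly by the case split. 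The only delicate point is keeping track of constants in the low-frequency regime $\delta_5|(\xi,\eta)|\lesssim 1$, where $\jap{\delta_5(\xi,\eta)}\sim 1$ and the scaling loses bite; but there $\jap{(\xi,\eta)}\lesssim 1/\delta_5$, so the explicit powers of $\delta_5$ supply the needed decay, and no real obstacle arises.
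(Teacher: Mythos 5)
Your proof is correct and rests on the same direct symbol computation that the paper uses, just carried out in full detail via the scaling identity $s_{\delta_5}(\xi,\eta)=\tilde s(\delta_5(\xi,\eta))$ and Peetre comparisons. The one substantive thing you add is a genuine verification of part (2): the paper's proof only checks the zeroth-order bound $|s_{\delta_5}|\le 1$, whereas your argument supplies the full family of $\delta_5$-uniform $S^0$ seminorm bounds on $\partial^\alpha_{\xi,\eta}s_{\delta_5}$ for all $\alpha$, which is what ``operator of order $0$ uniformly in $\delta_5$'' actually requires.
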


\begin{proof}
	\begin{enumerate}
		\item This follows immediately from the definition.
		\item It is easy to check that $|s_{\delta_{5}}|\leq 1\  \forall \xi,\eta$.
		\item Differentiating we get
		\begin{equation*}
		\frac{\partial_{\xi_{k}}s_{\delta_{5}}}{s_{\delta_{5}}}=\frac{\partial_{\xi_{k}}(1+\delta_{5}^{2}|\xi|^{2}+\delta_{5}^{2}|\eta|^{2})^{\frac{-(N+s+3)}{2}}}{(1+\delta_{5}^{2}|\xi|^{2}+\delta_{5}^{2}|\eta|^{2})^{\frac{-(N+s+3)}{2}}}
		=-(N+s+3)\frac{\delta_{5}^{2}\xi_k}{(1+\delta_{5}^{2}|\xi|^{2}+\delta_{5}^{2}|\eta|^{2})}
		\end{equation*}
		which is easily seen to be an operator of order $-1$ uniformly in $\delta_5$.
		\end{enumerate}
	\end{proof}

\bibliographystyle{amsalpha}
\bibliography{kdv}

\end{document}